\newcommand{\id}{{\rm id}}
\def\R{\mathbb{R}}
\def\T{\mathbb{T}}
\def\Ad{{\rm Ad}}     
\def\pr{$\bf{Proof.}$\quad}
\def\fin{\hfill$\square$\\}
\def\Bis{{\rm Bis}}
\def\Tau{{\mathcal T}}
\newtheorem{theo}{Theorem}[section]
\newtheorem{Defi}[theo]{{\bf Definition}}
\newenvironment{defi}{\begin{Defi} \normalfont}{\end{Defi}}
\newtheorem{Prop}[theo]{{\bf Proposition}}
\newenvironment{prop}{\begin{Prop} \normalfont}{\end{Prop}}
\newtheorem{Cor}[theo]{{\bf Corollary}}
\newenvironment{cor}{\begin{Cor} \normalfont}{\end{Cor}}
\newtheorem{Lem}[theo]{{\bf Lemma}}
\newenvironment{lem}{\begin{Lem} \normalfont}{\end{Lem}}
\newtheorem{pro}[theo]{Problem}
\newtheorem{Exa}[theo]{{\bf Example}}
\newtheorem{Rem}[theo]{{\bf Remark}}
\newenvironment{rem}{\begin{Rem} \normalfont}{\end{Rem}}
\newenvironment{prf}{\begin{proof}}{\end{proof}}
\begin{document}
\title{Lie rackoids} 

\author{Camille Laurent-Gengoux\\
        Universit\'e de Lorraine (Metz)\\
\and Friedrich Wagemann\\
     Universit\'e de Nantes}

\maketitle

\begin{abstract}
We define a new differential geometric structure, called Lie rackoid. 
It relates to Leibniz algebroids exactly as Lie groupoids relate to
Lie algebroids. Its main ingredient is a selfdistributive product
on the manifold of bisections of a smooth precategory. 

We show that the tangent algebroid of a Lie rackoid is a Leibniz algebroid
and that Lie groupoids gives rise via conjugation to a Lie rackoid. Our main 
objective are large classes of examples, including a Lie rackoid integrating 
the Dorfman bracket without the cocycle term of the standard Courant algebroid.   
\end{abstract}

\section*{Introduction}

There is a growing literature dealing with what is called indifferently Leibniz algebra 
or Loday algebra, see for example \cite{Lod}, and its infinite dimensional counterpart, 
namely Leibniz or Loday algebroids, see e.g. \cite{ILMP}, \cite{Wad}.  
We shall prefer here to use the name Leibniz algebra (following J.-L. Loday), 
rather than Loday algebra, although Loday is certainly the main promotor of the 
theory\footnote{The concept of 
a Leibniz algebra appears in a paper by Blokh in 1965.}. Recall that a Leibniz or Loday 
algebra is a vector space ${\mathfrak h}$ equipped with a bilinear bracket
that satisfies a Jacobi identity (meaning that the the left adjoint is a derivation of 
the bracket) although it is not necessarily skew-symmetric. In an equation:
$$[X,[Y,Z]]\,=\,[[X,Y],Z]+[Y,[X,Z]]$$
for all elements $X,Y,Z\in{\mathfrak h}$.
A Leibniz algebroid is then a vector bundle on some manifold $M$ together with an anchor 
map such that the space of sections carries the structure of a Leibniz algebra.   
A subtlety with the definition of Leibniz algebroid is that we only have, in general, 
an anchor map for the left adjoint action. Observe that under light conditions a Leibniz 
bracket which is a derivation in both of its arguments must be antisymmetric, see 
\cite{GraMar}.   

Our main goal in this article is to investigate the integrated or group-version of 
a Leibniz algebroid. We call this new structure {\it Lie rackoids}, because the tangent 
space at 1 of a Lie rack is a Leibniz algebra (see \cite{Kin}) generalizing the tangent 
Lie algebra of a Lie group. It is therefore natural to call the -oid version Lie rackoid. 
Our definition of a Lie rackoid draws on the conjugation Lie rackoid underlying a Lie 
groupoid. Indeed, the set of bisections in a Lie groupoid is a major actor for defining
a conjugation in a Lie groupoid. We define, roughly speaking, a Lie rackoid structure 
as being a selfdistributive operation, i.e. for all $x,y,z$
$$x\rhd(y\rhd z)\,=\,(x\rhd y)\rhd(x\rhd z),$$
on the set of bisections of some precategory which 
also acts on all arrows in the precategory. 

It is thus clear that one big class of examples for Lie rackoids are those underlying 
a Lie groupoid (Proposition 5.1). Another expected result is that the tangent structure
of a Lie rackoid is a Leibniz algebroid (Theorem 5.3). More classes of examples are 
obtaind from rackoid structures on those precategories where source map and target map 
coincide (bundles of Lie racks, see Section 4), or from the augmented rack construction,
suitably transposed in the present context (see Section 6.2). Our main example (see 
Proposition 6.2) is the 
Lie rackoid integrating the hemisemidirect product Leibniz algebroid made from the action 
of vector fields on 1-forms with the bracket
$$[X+\alpha,Y+\beta] = [X,Y]+ {\mathcal L}_X \beta$$
for all $X,Y \in {\mathcal X}(M), \alpha, \beta \in \Omega^1(M)$.
It is clear that this Leibniz algebroid is, up to a cocycle, the Dorfman bracket of
the standard Courant 
algebroid. Our search for the correct definition of a Lie rackoid was largely motivated 
by the integration of Courant algebroids, which was achieved in the framework of 
graded geometry in \cite{MetTan}, \cite{LiBSev}, \cite{SheZhu}, but which we wanted to 
explore using ordinary differential geometry. Namely, in the graded geometry framework, 
it is not clear how Dirac structures give rise to Lie subgroupoids of the integrated 
object. We plan to clarify this using a Lie rackoid integrating the standard Courant 
algebroid in a follow-up article.

In order to explain our concepts in a down-to-earth manner, we start by explaining the 
concept of a (plain) rackoid (Definition  \ref{definition_rackoid}), not involving 
smoothness. Augmented rackoids
are taken care of in Section 2.4 - the main construction mechanism for rackoids in our 
article. Then we pass to Lie rackoids in Section 3. The main idea is to give the set of 
bisections of a smooth precategory an infinite-dimensional (Fr\'echet-) manifold 
structure, inspired by work of Schmeding-Wockel \cite{SchWoc}. A Lie rackoid is then a
smooth selfdistributive structure on this manifold of bisections (Definition 3.5). 
The rest of the article discusses the above mentioned structure results and classes of 
examples.       
   
\noindent{\bf Acknowledgements:}
FW thanks Universit\'e Lorraine for financing several visits to Metz where this 
research has been carried out.  

\section{Bisections in a Lie groupoid}

Let $\xymatrix{\Gamma\ar@<2pt>[r]^{s}\ar@<-2pt>[r]_{t} & M}$ be a groupoid on a base 
manifold $M$ with source $s$ and target $t$. 
Notice that we use composition conventions which are the opposite of those in \cite{Mack}: 
our groupoid multiplications are read from left to right. This means in particular that 
two elements $x,y \in \Gamma$ are compatible (i.e. their product $xy$ is defined) if and 
only if $t(x)=s(y)$.

For two subsets $X,Y \subset \Gamma$, we shall denote by $X \star Y$ the subset of 
$\Gamma$ made of all possible
products of elements in $X$ with elements in $Y$: 
 \begin{equation}\label{eq:star} X \star Y := \{xy, (x, y) \in 
(\Gamma \times_{t,M,s} \Gamma) \cap (X \times Y) \} \end{equation}
The product $\star$ is of course associative:
 $$ X \star (Y \star Z) = (X \star Y) \star Z$$
for all $ X,Y,Z \subset \Gamma$.

In order to express left translation, right translation, and then conjugation on $\Gamma$, one 
is led to the concept of a bisection.

\begin{defi}  \label{bisections_in_a_groupoid}
Let $\xymatrix{\Gamma\ar@<2pt>[r]^{s}\ar@<-2pt>[r]_{t} & M}$ be a groupoid over $M$.
\begin{enumerate}
\item[(a)] A {\it bisection} of $\Gamma$ is any of the two equivalent data:
   \begin{enumerate}
	\item[(i)] a subset $\Sigma \subset \Gamma$ such that (the restrictions of) source and 
target maps $s:\Sigma \to M$ and $t:\Sigma \to M$ are bijective. 
	\item[(ii)] a map $\sigma:M\to \Gamma$ which is right inverse to the source map $s$ 
(i.e. a  section of $s$) and for which $t\circ\sigma:M \to M$ is a bijection of $M$.
	\end{enumerate}
Bisections seen as subsets shall be in general denoted by capital greek letters and their
corresponding maps by the corresponding greek script letters, e.g. $\Sigma$ and $\sigma$, $\Tau$ and $\tau$. 	 
\item[(b)] For a given bisection $\Sigma$, the {\it left translation} induced by $\Sigma$ is
the bijection of $\Gamma$ given by: 
$$l_{\Sigma}:\Gamma \to \Gamma,\,\,\,\,\,\gamma \mapsto \sigma \big((t\circ\sigma)^{-1}(s(\gamma))\big)
\,\,\gamma = \Sigma \star \{\gamma \}.$$ 
\item[(c)] Similarly, the {\it right translation} induced by $\Sigma$ is
$$r_{\Sigma}:\Gamma\to \Gamma,\,\,\,\,\,\gamma\mapsto 
\gamma\,\,\sigma(t(\gamma))=  \{\gamma \} \star \Sigma.$$
\item[(d)] Combining both, the {\it conjugation} induced by $\Sigma$ is
$$c_{\Sigma}:\Gamma\to \Gamma,\,
\gamma\mapsto  \sigma \big((t\circ\sigma)^{-1}(s(\gamma))\big)  
\,\,\gamma\,\,\tilde{\sigma}(t(\gamma)) = \Sigma \star \{ \gamma \} 
\star \Sigma^{-1},$$
where $\tilde{\sigma}(m)=(\sigma ( t\circ\sigma)^{-1}(m))^{-1}$ is 
the section of $s$ associated to the bisection $\Sigma^{-1}$.
\end{enumerate}
\end{defi}
 
For every Lie groupoid $\Gamma$, bisections form a group. The group product of bisections 
$ \Sigma, \Tau$, seen as subsets of $\Gamma$, is simply given by $ \Sigma \star \Tau$.

\begin{rem}
There is another reason to be interested in bisections which is related to the integration of 
Lie algebroids. Indeed, given a section $a\in\Gamma (A)$ of a Lie algebroid $A\to M$, consider 
the local flow $t \to \Phi_t^a$ of the left invariant vector field on  $\Gamma $ associated to 
$a$. As explained in Appendix A in \cite{CraFer},
for $t$ small enough and $a$ compactly supported, the submanifold $\Sigma_t := \Phi_t^a(M)$ is 
a local bisection of the corresponding Lie groupoid. The section of the source map associated 
with it is by construction the restriction of 
$\Phi_t^a $ to $M$. As a consequence, bisections can be seen as being obtained from integrating 
sections of a Lie algebroid.
\end{rem} 

\section{Rackoids}

In a first approach to the definition of Lie rackoids, we will discuss in this section the discrete 
version of a Lie rackoid, called simply a rackoid. The manifold version, i.e. Lie rackoids, will 
come up later. Note that not all rackoids that we are aware of are Lie rackoids, 
see propositions \ref{prop:conjclassesAreRackoids} and corollary 
\ref{rackoide_sans_isotropie} below.

\subsection{Racks}

First recall the notion of a rack, generalizing the conjugation operation in a group.

\begin{defi}
A \emph{rack} consists of a set $X$ equipped with a binary operation denoted 
$(x,y)\mapsto x\rhd y$ such that for all $x, y,$ and $z\in X$, the map $y\mapsto x\rhd y$ is 
bijective and
$$x \rhd (y\rhd z)\,=\,(x\rhd y)\rhd (x\rhd z).$$
\end{defi} 

Obviously, a group $G$ with the conjugation operation $g\rhd h:=ghg^{-1}$ 
for all $g,h\in G$ is an example of a rack. Observe that any conjugacy class
or union of conjugacy classes is still a rack for this operation, 
while it is not a group in general. Let us recall the notion of an
augmented rack:

\begin{defi}
Let $G$ be a group and $X$ be a $G$-set. We say that $X$ together with a 
map $p:X\to G$ is an \emph{augmented rack} when
it satisfies the augmentation identity, i.e. 
$$p(g\cdot x)\,=\,g\,p(x)\, g^{-1}$$
for all $g\in G$ and all $x\in X.$
\end{defi}

An augmented rack is not strictly speaking a rack,
but gives rise to a genuine rack structure on the set $X$:

\begin{prop}    \label{augmented_rack}
Suppose $p:X\to G$ is an augmented rack. Then  
setting for all $x,y\in X$, 
$$x\rhd y\,:=\,p(x)\cdot y$$
endows $X$ with the structure of a rack. 
\end{prop}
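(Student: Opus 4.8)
The plan is to verify the two rack axioms directly from the definition of $x \rhd y := p(x) \cdot y$. The key observation is that the augmentation identity $p(g \cdot x) = g\, p(x)\, g^{-1}$ is precisely what is needed to make the self-distributivity law work out.

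First I would check bijectivity of the map $y \mapsto x \rhd y = p(x) \cdot y$. Since $X$ is a $G$-set and $p(x) \in G$, this map is just the action of the group element $p(x)$, whose inverse is the action of $p(x)^{-1}$; hence it is a bijection of $X$ with inverse $y \mapsto p(x)^{-1} \cdot y$. This step is routine and relies only on $X$ being a $G$-set, not yet on the augmentation identity.

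The main step is the self-distributive identity
$$x \rhd (y \rhd z) = (x \rhd y) \rhd (x \rhd z).$$
I would compute both sides. The left-hand side is
$$x \rhd (y \rhd z) = p(x) \cdot \big(p(y) \cdot z\big) = \big(p(x)\,p(y)\big) \cdot z,$$
using that $X$ is a $G$-set. For the right-hand side, the outer product has first argument $x \rhd y = p(x) \cdot y$, so
$$(x \rhd y) \rhd (x \rhd z) = p(p(x) \cdot y) \cdot (p(x) \cdot z).$$
Here is where the augmentation identity enters: it gives $p(p(x) \cdot y) = p(x)\, p(y)\, p(x)^{-1}$. Substituting and using the $G$-action again,
$$(x \rhd y) \rhd (x \rhd z) = \big(p(x)\,p(y)\,p(x)^{-1}\big) \cdot \big(p(x) \cdot z\big) = \big(p(x)\,p(y)\,p(x)^{-1}\,p(x)\big) \cdot z = \big(p(x)\,p(y)\big) \cdot z,$$
which matches the left-hand side. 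This completes the verification.

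There is no serious obstacle here; the proposition is essentially the observation that the augmentation identity is engineered to intertwine $p$ with conjugation so that the self-distributivity of conjugation in $G$ transports to the action on $X$. The only point requiring care is bookkeeping: correctly identifying that the $p$ applied in the right-hand side is evaluated at the shifted point $p(x) \cdot y$ rather than at $y$, which is exactly the place the augmentation identity is consumed.
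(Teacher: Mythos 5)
Your proof is correct and follows essentially the same route as the paper's: both hinge on applying the augmentation identity to rewrite $p(p(x)\cdot y)$ as $p(x)p(y)p(x)^{-1}$ and then collapsing the group action to match the two sides of the self-distributivity law. The only difference is that you also verify bijectivity of $y\mapsto x\rhd y$ explicitly (via the action of $p(x)^{-1}$), a point the paper's proof leaves implicit; this is a welcome but minor addition, not a different approach.
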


\begin{proof}
We give a proof here for further reference. For all $x,y,z\in X$, a direct computation 
yields the rack identity:
\begin{eqnarray*}
(x\rhd y)\rhd(x\rhd z) &=& p(p(x)\cdot y)\cdot(p(x)\cdot z) \\
&=&  (p(x)p(y)p(x)^{-1})\cdot(p(x)\cdot z) \\
&=&  p(x)\cdot(p(y)\cdot z) \\
&=&  x\rhd(y\rhd z).
\end{eqnarray*}
\end{proof}

We will see in section \ref{augmented_rackoids} how this construction mechanism generalizes 
to rackoids. Units can be considered in the framework of racks; the corresponding notion is that 
of a pointed rack: 

\begin{defi}
A rack $X$ is {\it pointed} in case there exists an element $1\in X$ such that for all $x\in X$ 
$$1\rhd x\,=\,x,\,\,\,\,\,\,\,\,x\rhd 1\,=\,1.$$
\end{defi} 

Once again, the conjugation rack of a group is an example of a pointed rack. 

\begin{defi}
A {\it Lie rack} $X$ is a manifold which carries a pointed rack structure
such that all the structure maps are smooth and for all $x\in X$, the smooth 
map $y\mapsto x\rhd y$ is a diffeomorphism.   
\end{defi}

We take Lie racks to be pointed in order to be able to construct their tangent 
Leibniz algebra on the tangent space at the distiguished point 
(see Theorem 3.4 in \cite{Kin}).  

\subsection{Definition of a rackoid}

\begin{defi}
A small {\it precategory} is a pair of sets $(\Gamma,M)$ together with
surjective maps $s,t:\Gamma\to M$ and a map 
$\epsilon:M\to\Gamma$ such that $s\circ\epsilon=t\circ\epsilon=\id_M$.
In general, we shall use the shorthand $ 1_m =\epsilon(m)$.

\end{defi}

Obviously, a precategory with a composition 
map ${\rm comp}:\Gamma\times_M\Gamma\to\Gamma$
is a category. A pair of sets $(\Gamma,M)$ together with
surjective maps $s,t:\Gamma\to M$ will also be called a {\it semi-precategory}.
This notion will be important for the non-unitary version of rackoids.  

Definition \ref{bisections_in_a_groupoid} still makes sense for semi-precategories.
We repeat it for convenience:

\begin{defi}  \label{def:bisections}
Let $\xymatrix{\Gamma\ar@<2pt>[r]^{s}\ar@<-2pt>[r]_{t} & M}$ be a semi-precategory over $M$.
A {\it bisection} of $\Gamma$ is any of two equivalent data:
   \begin{enumerate}
	\item[(i)] a subset $\Sigma \subset \Gamma$ such that (the restrictions of) source and 
target maps $s:\Sigma \to M$ and $t:\Sigma \to M$ are bijective. 
	\item[(ii)] a map $\sigma:M\to \Gamma$ which is right inverse to the source map $s$ and 
for which $t\circ\sigma:M \to M$ is a bijection of $M$.
	\end{enumerate}
\end{defi}

Of course, when a groupoid is considered as a semi-precategory, bisections in the sense
of Definition \ref{def:bisections} are exactly the bisections described in section 
\ref{bisections_in_a_groupoid}.

We now define rackoids. For the sake of clarity, let us fix or recall some notation. For 
every pair of points $m,n \in M$, we denote by $ \Gamma_m^n$ the set of all elements in 
$\Gamma$ with source $m$ and target $n$. Again, for every bisection $\Sigma$,  we denote by 
$\sigma$ the corresponding right inverse of $s:\Sigma \to M$ and by $\underline{\sigma}: M \to M$
the bijection of $M$ obtained as the composition $ t \circ \sigma $.

\begin{defi}  \label{definition_rackoid}
A \emph{(non-unital) rackoid} is a semi-precategory $\xymatrix{\Gamma\ar@<2pt>[r]^{s}
\ar@<-2pt>[r]_{t} & M}$ with a composition law 
$\rhd:(\Sigma,\gamma)\mapsto \Sigma\rhd\gamma$ mapping
a bisection $\Sigma$ and an element $\gamma \in \Gamma_m^n$
to an element in $\Gamma_{\underline{\sigma} (m)}^{\underline{\sigma} (n)} $.
\footnote{Equivalently, for all bisections $\Sigma$ and all elements $\gamma \in \Gamma$, 
the composition $ \sigma \rhd \gamma$ is defined to be an element of $\Gamma$ whose source 
is $\underline{\sigma} \circ s (\gamma) $ and whose target is $\underline{\sigma} \circ t (\gamma) $.}
We require that:
\begin{enumerate}
\item for all bisections $\Sigma$, the assignment $\Sigma\rhd-:\Gamma \to \Gamma $
is a bijection\footnote{It is then automatic that for every bisection $ \Tau$, the image of 
$\Tau \subset \Gamma$ under
 $\Sigma\rhd- $, subset that we shall denote by $\Sigma\rhd \Tau $, is a bisection again, and 
that $\underline{\sigma\rhd \tau} = \underline{\sigma} \circ \underline{\tau} \circ \underline{\sigma}^{-1}$.}, 
\item the composition law is supposed to satisfy the self-distributivity relation
\begin{equation}   \label{eq:self_distributivity}
\Sigma\rhd(\Tau\rhd \gamma)=(\Sigma\rhd\Tau)\rhd(\Sigma\rhd \gamma)
\end{equation}
for all bisections $\Sigma,\Tau$ and all $\gamma\in\Gamma$. 
\end{enumerate}

Furthermore, in order to define a {\it unital or pointed rackoid},
in case  $ \Gamma$ is a precategory, we require the composition law to satisfy 
$1_M\rhd\gamma=\gamma$ and $\sigma\rhd 1_m=1_{\underline{\sigma}(m)}$ for all $m\in U$. Here we write 
$1_m$ for the unit at $m$, i.e. $1_m=\epsilon(m)$ and by $1_M$ the bisection $\epsilon (M)$. 
\end{defi}

As expected, a rackoid over a point (i.e. when $M$ is a point in definition \ref{definition_rackoid}) 
is a rack. It is not true in general that the vertex- or isotropy sets 
$\Gamma^m_m:=s^{-1}(m)\cap\,t^{-1}(m)$ become racks.

This is true, however, under the following conditions:

\begin{prop}
Let $\Gamma$ be a rackoid over $M$ and $m\in M$ be some element such that $\Gamma_m^m\not=\emptyset$.
Assume that there is a bisection through each point $ \gamma' \in \Gamma_m^m $ 
and that $\Sigma_1 \rhd \gamma = \Sigma_2 \rhd \gamma$ for every bisections 
$\Sigma_1,\Sigma_2 $ through $\gamma'$, then the isotropy sets
$\Gamma^m_m:=s^{-1}(m)\cap\,t^{-1}(m)$ become racks via the induced operation.
\end{prop}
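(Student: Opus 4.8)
The plan is to define the rack operation on $\Gamma_m^m$ by transporting the rackoid operation along any bisection, and then to check the two rack axioms. First I would define, for $\gamma',\gamma\in\Gamma_m^m$, the product $\gamma'\rhd\gamma:=\Sigma\rhd\gamma$, where $\Sigma$ is any bisection through $\gamma'$. The two hypotheses are exactly what makes this well defined: the first guarantees that such a $\Sigma$ exists, and the second that the result does not depend on the choice of $\Sigma$. I must also check that the product lands back in $\Gamma_m^m$. If $\gamma'\in\Sigma$ with $s(\gamma')=t(\gamma')=m$, then the associated section $\sigma$ satisfies $\sigma(m)=\gamma'$, hence $\underline{\sigma}(m)=t(\gamma')=m$; since $\Sigma\rhd\gamma\in\Gamma_{\underline{\sigma}(m)}^{\underline{\sigma}(m)}$ by the very definition of a rackoid, the product indeed lies in $\Gamma_m^m$.

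Next I would verify that $\gamma'\rhd-$ is a bijection of $\Gamma_m^m$. Choosing a bisection $\Sigma$ through $\gamma'$, this map is the restriction of the global bijection $\Sigma\rhd-\colon\Gamma\to\Gamma$ to $\Gamma_m^m$, so injectivity is immediate. For surjectivity, given $\delta\in\Gamma_m^m$ pick $\gamma\in\Gamma$ with $\Sigma\rhd\gamma=\delta$; reading off sources and targets yields $\underline{\sigma}(s(\gamma))=\underline{\sigma}(t(\gamma))=m$, and since $\underline{\sigma}$ is a bijection fixing $m$ we conclude $\gamma\in\Gamma_m^m$. Thus $\gamma'\rhd-$ maps $\Gamma_m^m$ bijectively onto itself.

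The heart of the argument is self-distributivity. Fix $\gamma_1',\gamma_2',\gamma\in\Gamma_m^m$ together with bisections $\Sigma_1,\Sigma_2$ through $\gamma_1',\gamma_2'$. Unwinding the definition, the left-hand side $\gamma_1'\rhd(\gamma_2'\rhd\gamma)$ equals $\Sigma_1\rhd(\Sigma_2\rhd\gamma)$, which by the rackoid self-distributivity (\ref{eq:self_distributivity}) equals $(\Sigma_1\rhd\Sigma_2)\rhd(\Sigma_1\rhd\gamma)$. For the right-hand side, the key observation is that $\Sigma_1\rhd\Sigma_2$ is again a bisection (axiom 1 of the rackoid), and since $\gamma_2'\in\Sigma_2$ forces $\gamma_1'\rhd\gamma_2'=\Sigma_1\rhd\gamma_2'\in\Sigma_1\rhd\Sigma_2$, it is a bisection \emph{through} $\gamma_1'\rhd\gamma_2'$. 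Invoking well-definedness (the second hypothesis) to evaluate the rack product using precisely this bisection, and recalling $\gamma_1'\rhd\gamma=\Sigma_1\rhd\gamma$, I obtain $(\gamma_1'\rhd\gamma_2')\rhd(\gamma_1'\rhd\gamma)=(\Sigma_1\rhd\Sigma_2)\rhd(\Sigma_1\rhd\gamma)$, which matches the left-hand side.

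The main obstacle is conceptual rather than computational: one must choose, on the right-hand side of the rack identity, the particular representing bisection $\Sigma_1\rhd\Sigma_2$ rather than an arbitrary bisection through $\gamma_1'\rhd\gamma_2'$. It is exactly the second hypothesis, independence of the choice of bisection, that licenses this choice and thereby lets the rackoid self-distributivity descend to the isotropy set.
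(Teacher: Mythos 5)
The paper states this proposition without proof, so there is no official argument to compare against; judged on its own, your proof is correct and complete. Your treatment of well-definedness, the closure and bijectivity checks via $\underline{\sigma}(m)=m$ for any bisection through a point of $\Gamma_m^m$, and especially the key step --- observing that $\Sigma_1\rhd\Sigma_2$ is a bisection \emph{through} $\gamma_1'\rhd\gamma_2'$, so that the choice-independence hypothesis licenses evaluating the right-hand side with precisely that bisection and thereby lets the rackoid self-distributivity descend to the isotropy set --- supply exactly the reasoning the authors evidently intended.
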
 

The rack $\Gamma^m_m$ described in the previous proposition is called {\it isotropy rack} 
in at $m$ of the rackoid $\Gamma$. 

Rackoids have permanence properties quite different from those of groupoids or even of racks:

\begin{prop}
For all rackoid $\Gamma$, the subset $ \{ \gamma \in \Gamma | s(\gamma) \neq t(\gamma) \} $
is a rackoid. 
\end{prop}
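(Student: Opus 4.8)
The plan is to realise $\Gamma' := \{\gamma\in\Gamma \mid s(\gamma)\neq t(\gamma)\}$ as a sub-semi-precategory of $\Gamma$ on which the rackoid operation restricts, the whole argument resting on the single fact that for every bisection $\Sigma$ the base map $\underline{\sigma}=t\circ\sigma$ is a \emph{bijection} of $M$. Writing $D:=\Gamma\setminus\Gamma'=\{\gamma \mid s(\gamma)=t(\gamma)\}$ for the diagonal part, I would first record the key observation: if $\gamma\in\Gamma_m^n$ then $\Sigma\rhd\gamma\in\Gamma_{\underline{\sigma}(m)}^{\underline{\sigma}(n)}$, and since $\underline{\sigma}$ is injective we have $m\neq n$ if and only if $\underline{\sigma}(m)\neq\underline{\sigma}(n)$. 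Hence the bijection $\Sigma\rhd-\colon\Gamma\to\Gamma$ sends $\Gamma'$ into $\Gamma'$ and $D$ into $D$, for \emph{every} bisection $\Sigma$ of $\Gamma$.

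From this the two rackoid axioms follow with no further computation. A bijection of $\Gamma$ that maps each of the disjoint sets $\Gamma'$ and $D$ into itself necessarily maps each \emph{onto} itself (if $y\in\Gamma'$ equals $\Sigma\rhd x$ then $x\notin D$, so $x\in\Gamma'$); thus $\Sigma\rhd-$ restricts to a bijection of $\Gamma'$. In particular, calling a bisection of $\Gamma'$ any bisection $\Sigma'$ of $\Gamma$ with $\Sigma'\subseteq\Gamma'$ (equivalently, $\underline{\sigma'}$ fixed-point free), the assignment $\Sigma'\rhd-$ is a well-defined bijection of $\Gamma'$. The self-distributivity relation \eqref{eq:self_distributivity} is then inherited verbatim from $\Gamma$: for bisections $\Sigma',\Tau'$ of $\Gamma'$ and $\gamma\in\Gamma'$, each of $\Tau'\rhd\gamma$, $\Sigma'\rhd\gamma$ and $\Sigma'\rhd\Tau'$ again lies in $\Gamma'$ — the last because $\Sigma'\rhd\Tau'=(\Sigma'\rhd-)(\Tau')$ is the image of a bisection under $\Sigma'\rhd-$, hence a bisection contained in $\Gamma'$ — so the identity holding in $\Gamma$ is literally an identity in $\Gamma'$.

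The step I expect to be the real obstacle is not the operation but the semi-precategory axiom for $\Gamma'$, namely surjectivity of the restricted maps $s,t\colon\Gamma'\to M$; this point requires care, the extreme case being a bundle of Lie racks (Section 4), where $s=t$ and hence $\Gamma'=\emptyset$. I would handle it by noting that the restriction is onto exactly when $\Gamma'$ carries a bisection: if $\Sigma'\subseteq\Gamma'$ is a bisection then $s(\Gamma')\supseteq s(\Sigma')=M$ and likewise $t(\Gamma')=M$, so $\Gamma'$ is automatically a semi-precategory over $M$ and the paragraphs above verify Definition \ref{definition_rackoid}. When $\Gamma'$ admits no bisection the rackoid structure is vacuous and the statement is to be read over the base image rather than over all of $M$; making this bookkeeping precise is the only delicate point, the geometric content being entirely contained in the invariance of $\Gamma'$ under every $\Sigma\rhd-$.
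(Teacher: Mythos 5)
Your proposal is correct and its core is exactly the paper's argument: since $\underline{\sigma}$ is a bijection of $M$, the condition $s(\gamma)\neq t(\gamma)$ is preserved under every map $\Sigma\rhd-$, so the subset is invariant under the rack operations and inherits self-distributivity verbatim. The extra bookkeeping you flag --- onto-ness of the restricted bijection, which bisections of $\Gamma$ survive inside $\Gamma'$, and the surjectivity/empty-set issue --- is absent from the paper's two-line proof but is handled there by the convention stated immediately after the proposition: the empty set is not admitted as a rackoid, and all (semi-)precategories are implicitly assumed to admit bisections.
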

\begin{proof}
If $x\in\Gamma$ has $s(x)=m\in M$ and $t(x)=n\not= m$ with $n,m\in U$, then 
for all bisections $\sigma\in{\rm Bis}(\Gamma)$, we have that
$s(\sigma\rhd x)=\underline{\sigma}(m)$ and 
$t(\sigma\rhd x)=\underline{\sigma}(n)\not=\underline{\sigma}(m)$
by bijectivity.
Therefore the subset of elements where source and target are different 
is preserved by the rack operations.  
\end{proof} 

We will not admit the empty set as a rackoid and we will furthermore implicitely assume that all our
(semi-) precategories do admit bisections (otherwise the axioms are empty).

\subsection{Groupoids as rackoids}

The first example of a rackoid is of course a groupoid. For $\Gamma$ a groupoid over $M$, the 
composition rule defined for all bisection $\Sigma$ and $\gamma$, by 
\begin{equation}\label{eq:product-bisection-elements} 
\Sigma \rhd \gamma = \Sigma \star \{ \gamma \} \star \Sigma^{-1}
\end{equation}
defines a rackoid structure. Recall that $ \star $ is the operation defined in (\ref{eq:star}),
see Section \ref{bisections_in_a_groupoid}. 

\begin{prop}  \label{groupoid_as_rackoid}
The conjugation operation (\ref{eq:product-bisection-elements}) in a groupoid gives rise to a rack 
product on bisections, rendering the groupoid a rackoid.
\end{prop}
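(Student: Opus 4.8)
The plan is to exploit the fact, recalled just after Definition \ref{bisections_in_a_groupoid}, that the bisections of $\Gamma$ form a group $\Bis(\Gamma)$ under the product $\star$, with unit the identity bisection $1_M=\epsilon(M)$ and inverse $\Sigma\mapsto\Sigma^{-1}$. Once this is in hand, the conjugation formula \eqref{eq:product-bisection-elements} is nothing but the conjugation action of this group on the set $\Gamma$, and the rackoid axioms of Definition \ref{definition_rackoid} drop out of the group-action formalism.

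First I would check that $\Sigma\rhd\gamma=\Sigma\star\{\gamma\}\star\Sigma^{-1}$ is a well-defined single element of $\Gamma$ and compute its source and target. The point is that, because $s$ and $t$ restrict to bijections on any bisection, there is exactly one element of $\Sigma$ that is composable with $\gamma$ on the left (namely $\sigma(\underline\sigma^{-1}(s(\gamma)))$) and exactly one element of $\Sigma^{-1}$ composable on the right; hence the triple $\star$-product is a single arrow rather than a larger subset. Tracking source and target through the section $\sigma$ and the induced bijection $\underline\sigma=t\circ\sigma$ then shows that $\Sigma\rhd\gamma$ lands in the fibre prescribed by Definition \ref{definition_rackoid}. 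I expect this bookkeeping to be the main obstacle: it is the only step that genuinely uses the bisection hypothesis and the left-to-right composition convention, and one must be careful about the order in which $\underline\sigma$ and its inverse appear.

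Next I would observe that $\rhd$ is a left action of $\Bis(\Gamma)$ on the set $\Gamma$. Indeed $1_M\rhd\gamma=\gamma$, and using associativity of $\star$ together with $(\Sigma\star\Tau)^{-1}=\Tau^{-1}\star\Sigma^{-1}$ one gets
$$(\Sigma\star\Tau)\rhd\gamma=(\Sigma\star\Tau)\star\{\gamma\}\star\Tau^{-1}\star\Sigma^{-1}=\Sigma\rhd(\Tau\rhd\gamma).$$
Bijectivity of $\Sigma\rhd-$ (axiom (1)) is then immediate, since the action property gives $\Sigma^{-1}\rhd(\Sigma\rhd\gamma)=\gamma=\Sigma\rhd(\Sigma^{-1}\rhd\gamma)$, so $\Sigma^{-1}\rhd-$ is a two-sided inverse. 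The same formula, applied to a bisection $\Tau$ in place of $\gamma$, shows that $\Sigma\rhd\Tau=\Sigma\star\Tau\star\Sigma^{-1}$ is again a bisection (the group is closed under $\star$ and inverses) and coincides with the conjugate of $\Tau$ in $\Bis(\Gamma)$.

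Finally, self-distributivity \eqref{eq:self_distributivity} becomes a one-line consequence of the action axiom. Writing the outer operation on the right-hand side as conjugation by the bisection $\Sigma\rhd\Tau=\Sigma\star\Tau\star\Sigma^{-1}$ and using the action property twice,
$$(\Sigma\rhd\Tau)\rhd(\Sigma\rhd\gamma)=\big((\Sigma\star\Tau\star\Sigma^{-1})\star\Sigma\big)\rhd\gamma=(\Sigma\star\Tau)\rhd\gamma=\Sigma\rhd(\Tau\rhd\gamma),$$
which is exactly the left-hand side. This completes the verification that \eqref{eq:product-bisection-elements} endows the groupoid with a rackoid structure.
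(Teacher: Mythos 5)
Your proposal is correct, and it is genuinely more of a proof than what the paper offers: the paper's argument for Proposition \ref{groupoid_as_rackoid} is a single sentence asserting that the claim follows directly from Section 1 and from the fact that Definition \ref{definition_rackoid} was modeled on groupoid conjugation. Your route supplies the missing verification, and it does so by the same mechanism the paper itself uses elsewhere: you realize $\rhd$ as the conjugation action of the group $(\Bis(\Gamma),\star)$ on the set $\Gamma$, so that bijectivity of $\Sigma\rhd-$ and self-distributivity (\ref{eq:self_distributivity}) become formal consequences of the action identity $(\Sigma\star\Tau)\rhd\gamma=\Sigma\rhd(\Tau\rhd\gamma)$; this is essentially the computation in the proof of Proposition \ref{augmented_rack}, with $\Bis(\Gamma)$ playing the role of $G$, i.e.\ you have in effect recognized the groupoid case as the prototype of the augmented construction of Theorem \ref{thm:augmented_rackoids}. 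The two points you isolate are exactly the ones needing care and left implicit by the paper: the triple $\star$-product $\Sigma\star\{\gamma\}\star\Sigma^{-1}$ is a single arrow because bijectivity of $t$ on $\Sigma$ and of $s$ on $\Sigma^{-1}$ leaves exactly one composable element on each side, and the elementwise image $\Sigma\rhd\Tau$ (which is what Definition \ref{definition_rackoid} refers to) coincides with the group conjugate $\Sigma\star\Tau\star\Sigma^{-1}$, which is why it is again a bisection and why your final display is legitimate.

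One correction, precisely at the step you flag as delicate. With the paper's left-to-right convention ($xy$ defined iff $t(x)=s(y)$, hence $s(xy)=s(x)$ and $t(xy)=t(y)$), the unique element of $\Sigma$ composable on the left with $\gamma\in\Gamma_m^n$ is $\sigma(\underline{\sigma}^{-1}(m))$, whose \emph{source} is $\underline{\sigma}^{-1}(m)$; consequently
$$\Sigma\star\{\gamma\}\star\Sigma^{-1}\in\Gamma_{\underline{\sigma}^{-1}(m)}^{\underline{\sigma}^{-1}(n)},
\qquad \Sigma\rhd 1_m=1_{\underline{\sigma}^{-1}(m)},$$
whereas Definition \ref{definition_rackoid} prescribes the fibre $\Gamma_{\underline{\sigma}(m)}^{\underline{\sigma}(n)}$ and the unit $1_{\underline{\sigma}(m)}$. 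So the conjugate does not land where you claim; it lands in the fibre prescribed for $\Sigma^{-1}$. This mismatch is inherited from the paper itself (Definition \ref{bisections_in_a_groupoid}(d) and formula (\ref{eq:product-bisection-elements}) produce $\underline{\sigma}^{-1}$, while Definition \ref{definition_rackoid} demands $\underline{\sigma}$), and it is harmless: either define the operation as $\Sigma^{-1}\star\{\gamma\}\star\Sigma$, or accept that the base bijection attached to $\Sigma$ acting by (\ref{eq:product-bisection-elements}) is $\underline{\sigma}^{-1}$ rather than $\underline{\sigma}$. Your argument goes through verbatim under either fix, since neither the action identity nor the self-distributivity computation depends on which convention is in force.
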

\begin{proof}
This follows directly from Section \ref{bisections_in_a_groupoid} and definition 
\ref{definition_rackoid} which is modeled onto the conjugacy rackoid underlying a groupoid. 
\end{proof} 

By the \emph{orbit} of an element $\gamma$ in a groupoid $\Gamma$, we mean an orbit of $s(\gamma)$
under the natural action
of a groupoid on its unit space. Isotropy groups over elements in a given orbit $O$ form 
a group bundle over $M$. For all $m,n \in M$, conjugation by an arbitrary element $\gamma \in \Gamma_m^n $
induces a group automorphism and different choices for $\gamma$ lead to automorphisms that differ by 
inner group
morphisms.  In particular, the isotropy groups associated to any two elements  
in the same orbit are conjugated.

By the \emph{conjugacy class} of an element $\gamma$ in a groupoid $ \Gamma$ that we assume to 
admit a bisection through any of its element, we mean the set of all 
elements of the form $\Sigma \rhd \gamma $ for $\Sigma$ a bisection of $\Gamma$. If 
$ s(\gamma) \not= t(\gamma) $, then the conjugacy class of $\gamma$ is made of all elements 
$ \{ \gamma' \in \Gamma_O^O \, | \, s(\gamma') \not= t(\gamma') \} $ with $O$ being the orbit of 
$\gamma$. If $ s(\gamma) = t(\gamma) $, then the conjugacy class of $\gamma$ is made of the 
collection of all the orbits in the isotropy groups of the orbits of $\gamma$ that 
correspond to the conjugacy group of $ \gamma \in \Gamma_m^m$. In any case, the following 
result is true:

\begin{prop}\label{prop:conjclassesAreRackoids}
Each conjugacy class in a groupoid is a rackoid.
\end{prop}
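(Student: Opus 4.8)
The plan is to show that each conjugacy class $C$ in $\Gamma$ inherits the structure of a rackoid by restricting the ambient conjugacy operation $\Sigma \rhd \gamma = \Sigma \star \{\gamma\} \star \Sigma^{-1}$ from Proposition \ref{groupoid_as_rackoid}. The key observation is that the self-distributivity relation \eqref{eq:self_distributivity} is already established on all of $\Gamma$; what remains is to verify that $C$ is genuinely a sub-semi-precategory on which $\rhd$ restricts to a well-defined bijective operation. I would first fix $\gamma_0 \in C$ with orbit $O$ and let $C$ be its conjugacy class as described in the text, so that the relevant base manifold is the saturation of $O$ (or we restrict attention to the orbit itself). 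The source and target maps restrict to $C$, and by assumption there is a bisection through every element, so bisections of $\Gamma$ acting by conjugation map $C$ into itself --- this is exactly the content one must extract from the two structural descriptions of conjugacy classes given just before the statement.

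First I would treat the two cases separately, mirroring the dichotomy in the preceding paragraph. In the case $s(\gamma_0) \neq t(\gamma_0)$, the class $C$ is $\{\gamma' \in \Gamma_O^O \mid s(\gamma') \neq t(\gamma')\}$, and I would invoke the earlier Proposition showing that the off-diagonal locus $\{\gamma \mid s(\gamma)\neq t(\gamma)\}$ is preserved by the rack operation; combining this with the fact that conjugation permutes elements within a single orbit-block $\Gamma_O^O$, one gets that $\Sigma \rhd (-)$ maps $C$ bijectively onto itself. In the diagonal case $s(\gamma_0)=t(\gamma_0)$, the class consists of the union of conjugacy-group orbits inside the isotropy groups over $O$, and here I would use the standard fact recalled in the text that conjugation by any element $\gamma \in \Gamma_m^n$ induces group automorphisms differing only by inner morphisms, so that the isotropy conjugacy class is stable.

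The main steps are then: (1) verify that $\rhd$ restricts to a map $C \times \Bis(\Gamma) \to C$, i.e. $\Sigma \rhd \gamma \in C$ whenever $\gamma \in C$; (2) verify bijectivity of $\Sigma \rhd (-)$ as a self-map of $C$, which follows from bijectivity on $\Gamma$ together with the stability of $C$ under both $\Sigma \rhd (-)$ and its inverse $\Sigma^{-1} \rhd (-)$; and (3) note that self-distributivity holds automatically, since it is an identity already valid on all of $\Gamma$ and both sides land in $C$ by step (1). The source/target compatibility with $\underline{\sigma}$ required by Definition \ref{definition_rackoid} is inherited verbatim from the ambient rackoid.

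The main obstacle I expect is step (1), the closure of $C$ under conjugation: one must carefully check that conjugation does not leave the prescribed orbit block or mix the diagonal and off-diagonal strata. The off-diagonal preservation is already packaged as a proposition, so the genuine work is confining conjugation to a \emph{single} orbit $O$ rather than all of $M$, and in the diagonal case identifying the image correctly with the same union of conjugacy-group orbits. I would handle this by tracking source and target through the formula $\underline{\sigma \rhd \tau} = \underline{\sigma}\circ\underline{\tau}\circ\underline{\sigma}^{-1}$ from the footnote in Definition \ref{definition_rackoid}, which shows that $\underline{\sigma}$ preserves orbits and hence that conjugation stays within $\Gamma_O^O$.
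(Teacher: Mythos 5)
The paper itself offers no written proof of Proposition \ref{prop:conjclassesAreRackoids}: it is stated as an immediate consequence of the preceding description of conjugacy classes, so the comparison here is with the argument the authors leave implicit. Your overall strategy --- restrict the ambient conjugation (\ref{eq:product-bisection-elements}) to the class $C$, check that $C$ is stable, and inherit bijectivity and self-distributivity --- is exactly that implicit argument. Note, however, that the step you single out as the main obstacle, closure of $C$ under conjugation, needs none of the two-case analysis you describe: since bisections of a groupoid form a group under $\star$ and $\Tau\rhd(\Sigma\rhd\gamma)=\Tau\star\Sigma\star\{\gamma\}\star\Sigma^{-1}\star\Tau^{-1}=(\Tau\star\Sigma)\rhd\gamma$, the operation $\rhd$ is a group action of $\Bis(\Gamma)$ on $\Gamma$, and $C$ is by definition an orbit of that action, hence stable under every $\Sigma\rhd(-)$ and every $\Sigma^{-1}\rhd(-)$. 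The case description before the proposition identifies what the orbits look like; it is not needed for closure. (Also, the reason $\underline{\sigma}$ preserves each orbit is not the footnote formula you quote, but simply that $\sigma(m)$ is itself an arrow from $m$ to $\underline{\sigma}(m)$.)

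The genuine gap is elsewhere: what your steps (1)--(3) produce is an action of $\Bis(\Gamma)$ on the set $C$, and that is not yet a rackoid structure on $C$ in the sense of Definition \ref{definition_rackoid}. That definition requires the composition law to be defined on bisections of the semi-precategory $C$ itself, i.e.\ on subsets $\Sigma'\subset C$ with $s,t:\Sigma'\to O$ bijective, where $O=s(C)=t(C)$ is the orbit serving as base. These are not bisections of $\Gamma$: a bisection of $\Gamma$ lies over all of $M$, so when $O\neq M$ none of them is contained in $C$, and in the off-diagonal case even $\epsilon(M)$ misses $C$ entirely (which is why that class is only a non-unital rackoid). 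The missing step is short but must be said: for $\Sigma'$ a bisection of $C$ and $\gamma\in C$, the conjugation $\Sigma'\star\{\gamma\}\star(\Sigma')^{-1}$ is still well defined, because $C\subset\Gamma_O^O$ and $s,t:\Sigma'\to O$ are bijective, so the required composable elements exist and are unique; equivalently, $\Sigma'$ extends to the bisection $\tilde{\Sigma}:=\Sigma'\cup\epsilon(M\setminus O)$ of $\Gamma$, whose conjugation action on $C$ coincides with this intrinsic one. With that in place, your orbit argument applied to $\tilde{\Sigma}$ gives $\Sigma'\rhd\gamma\in C$, and bijectivity, self-distributivity and source/target compatibility are inherited exactly as you claim. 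One should also record that $C$ admits bisections at all (in the diagonal case, choose one point of $C\cap\Gamma_{m'}^{m'}$ for each $m'\in O$; in the off-diagonal case, choose arrows from $m'$ to $f(m')$ for a fixed-point-free bijection $f$ of $O$), since the paper's conventions require its semi-precategories to admit bisections. In fairness, the paper's own proof of the preceding proposition on the locus $s(\gamma)\neq t(\gamma)$ is loose on exactly the same point, but a complete proof of the present statement has to address it.
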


\begin{cor}  \label{rackoide_sans_isotropie}
Identify, in a Lie groupoid, two elements $\gamma$ and $\gamma'$ if and only if 
$s(\gamma)=s(\gamma')$ and $t(\gamma)=t(\gamma')$. The set
of equivalence classes is a rackoid (and even a groupoid).
\end{cor}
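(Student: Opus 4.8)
The plan is to show that the relation described is a \emph{groupoid congruence} on $\Gamma$, so that the quotient set inherits a groupoid structure from $\Gamma$, and then to invoke Proposition \ref{groupoid_as_rackoid} to deduce that it is a rackoid. First I would identify the equivalence classes: writing $\gamma\sim\gamma'$ for the relation, the class of $\gamma$ is exactly the fibre $\Gamma_{s(\gamma)}^{t(\gamma)}=s^{-1}(s(\gamma))\cap t^{-1}(t(\gamma))$. Hence the quotient set $\overline{\Gamma}:=\Gamma/\!\sim$ is in bijection with the set of pairs $(m,n)\in M\times M$ for which $\Gamma_m^n\neq\eset$; I write $[\gamma]$ for the class of $\gamma$ and $\pi\:\Gamma\to\overline{\Gamma}$ for the projection. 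Since $s$ and $t$ are constant on classes, they descend to well-defined maps $\overline{s},\overline{t}\:\overline{\Gamma}\to M$, and the units $m\mapsto[1_m]$ give a map $\overline{\epsilon}\:M\to\overline{\Gamma}$ with $\overline{s}\circ\overline{\epsilon}=\overline{t}\circ\overline{\epsilon}=\id_M$, so that $(\overline{\Gamma},M)$ is a precategory.

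The heart of the argument is to check that $\sim$ is compatible with the groupoid operations, which is immediate because the relation only remembers source and target. Concretely, when $t(\gamma)=s(\delta)$ I set $[\gamma]\cdot[\delta]:=[\gamma\delta]$; this is well defined since $\gamma\sim\gamma'$ and $\delta\sim\delta'$ force $\gamma\delta$ and $\gamma'\delta'$ to share the source $s(\gamma)$ and the target $t(\delta)$. In the same way $[\gamma]^{-1}:=[\gamma^{-1}]$ is well defined. Because $\pi$ is surjective and intertwines all the structure maps, the groupoid axioms (associativity, together with the unit and inverse laws) on $\overline{\Gamma}$ follow at once by projecting the corresponding identities in $\Gamma$. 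Thus $\overline{\Gamma}$ is a groupoid over $M$, and by Proposition \ref{groupoid_as_rackoid} it is in particular a rackoid.

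I expect no genuine obstacle here beyond the bookkeeping of well-definedness; the one point worth making explicit is a clean description of the quotient. Since $\Gamma_m^n\neq\eset$ precisely when $m$ and $n$ lie in the same orbit $O$, the groupoid $\overline{\Gamma}$ is the disjoint union, over the orbits, of the pair (codiscrete) groupoids $O\times O$, in which there is exactly one arrow between any two objects. In particular every isotropy set $\overline{\Gamma}_m^m$ is reduced to the identity, which is precisely why this construction furnishes a rackoid without isotropy.
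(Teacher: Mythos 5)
Your proof is correct and takes the route the paper itself intends (the corollary is stated there without an explicit written proof): you check that the same-source-and-target relation is a groupoid congruence, so the quotient --- the disjoint union of pair groupoids over the orbits, with trivial isotropy --- is again a groupoid over $M$, and then conclude via Proposition \ref{groupoid_as_rackoid} that it is a rackoid. This is exactly what the parenthetical ``and even a groupoid'' in the statement signals, so nothing further is needed.
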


\subsection{Augmented rackoids}   \label{augmented_rackoids}

In the present section, we generalize to rackoids the mechanism which constructs racks out of 
augmented racks. 

\begin{theo}\label{thm:augmented_rackoids}
Let $\xymatrix{\Gamma\ar@<2pt>[r]^{s'}\ar@<-2pt>[r]_{t'} & M}$ be a groupoid and 
$\xymatrix{X\ar@<2pt>[r]^{s}\ar@<-2pt>[r]_{t} & M}$ be 
a precategory. Suppose that there exists a morphism of precategories $p:X\to \Gamma$, 
i.e. a map such that
\begin{enumerate}
\item The map $p$ intertwines\footnote{Note that this condition implies that the $p$-image 
of a bisection of $X$ is a bisection of $\Gamma$.} the sources and targets of 
$X$ and $ \Gamma$:
$$\xymatrix{X \ar@<2pt>[d]^{s}\ar@<-2pt>[d]_{t} \ar[r]^p & 
\Gamma \ar@<2pt>[d]^{s'} \ar@<-2pt>[d]_{t'}  \\
M \ar[r]^{\rm id_M} & M }$$
\item The map $p$ intertwines the identity maps $\epsilon:M\to X$ and $\epsilon':M\to\Gamma$:
$$\xymatrix{ X \ar[r]^p & \Gamma \\
             M \ar[u]^{\epsilon} \ar[r]^{\rm id_M} & M \ar[u]^{\epsilon'} }$$ 
\end{enumerate}
Suppose that the group of bisections $\Bis(\Gamma)$ acts on $X$ in such a manner that for all 
$\Sigma \in \Bis (\Gamma)$ and all $x\in X$: 
\begin{equation}    \label{augmentation_identity}
p(\Sigma \cdot x)\,=\,\Sigma \star p(x) \star \Sigma^{-1},
\end{equation}
while for all $m \in M$
\begin{equation}    \label{augmentation_identity_2}
p(\Sigma) \cdot 1_m'=1_m'. 
\end{equation}
where, as usual, $1_m'=\epsilon'(m)$.

Then the prescription $$ \Tau\rhd x\,:=\,p(\Tau)\cdot x $$ defines for all $x\in X$ and all 
bisections $\Tau$ of $X$ a pointed rackoid structure on $X$. 
\end{theo}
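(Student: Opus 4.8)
The plan is to mirror the proof of Proposition \ref{augmented_rack} (the augmented-rack case over a point), lifting each step to the groupoid setting while tracking source and target maps carefully. The key conceptual point is that the augmentation identity \eqref{augmentation_identity} expresses the action of $\Sigma$ on $X$ via conjugation by $p(\Sigma)$ inside $\Gamma$, and the self-distributivity of the conjugation rackoid on $\Gamma$ (Proposition \ref{groupoid_as_rackoid}) is what ultimately produces self-distributivity on $X$.

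First I would verify that $\rhd$ is well defined, i.e. that $\Tau \rhd x := p(\Tau)\cdot x$ respects the required source and target behaviour demanded in Definition \ref{definition_rackoid}. Since $p$ intertwines sources and targets (condition 1) and since $p(\Tau)$ is a bisection of $\Gamma$ (by the footnote to condition 1), the conjugation action of $p(\Tau)$ shifts source and target by $\underline{p(\tau)}$; because $p$ covers $\mathrm{id}_M$, this matches $\underline{\tau}$, so $\Tau\rhd x$ lands in the correct fibre $X_{\underline{\tau}(m)}^{\underline{\tau}(n)}$. Bijectivity of $\Tau\rhd-$ (axiom 1) follows from the fact that $p(\Tau)\in\Bis(\Gamma)$ acts on $X$ as an element of a group, hence invertibly. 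For the self-distributivity \eqref{eq:self_distributivity}, I would compute directly, as in Proposition \ref{augmented_rack}:
\begin{eqnarray*}
(\Tau\rhd\Sigma)\rhd(\Tau\rhd x) &=& p\big(p(\Tau)\cdot\Sigma\big)\cdot\big(p(\Tau)\cdot x\big) \\
&=& \big(p(\Tau)\star p(\Sigma)\star p(\Tau)^{-1}\big)\cdot\big(p(\Tau)\cdot x\big) \\
&=& p(\Tau)\cdot\big(p(\Sigma)\cdot x\big) \\
&=& \Tau\rhd(\Sigma\rhd x),
\end{eqnarray*}
where the second equality is \eqref{augmentation_identity} applied with $\Sigma$ in place of $x$ (legitimate because $p$-images of bisections are bisections, so $p(\Tau)\cdot\Sigma$ is again a bisection and $p$ is a precategory morphism), and the third uses that the $\Bis(\Gamma)$-action is a genuine group action so that conjugation followed by action collapses. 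One subtlety I would check here is that $p(p(\Tau)\cdot\Sigma)$ really equals $p(\Tau)\star p(\Sigma)\star p(\Tau)^{-1}$: this is exactly the augmentation identity \eqref{augmentation_identity} transported through the morphism $p$, and it is the analogue of the step $p(g\cdot x)=gp(x)g^{-1}$ in the rack proof.

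Finally, for the pointed axioms I would use condition 2 (that $p$ intertwines the units) together with \eqref{augmentation_identity_2}. The identity $1_M\rhd x=p(1_M)\cdot x = x$ should follow because $p$ sends the unit bisection $\epsilon(M)$ of $X$ to the unit bisection $\epsilon'(M)$ of $\Gamma$, which acts trivially; and $\tau\rhd 1_m = p(\tau)\cdot 1_m' = 1_{\underline{\tau}(m)}'$ is precisely the content of \eqref{augmentation_identity_2} once we identify the unit $1_m$ of $X$ with $1_m'$ via $p$. The main obstacle I anticipate is purely bookkeeping: making sure that ``bisection of $X$'' and ``bisection of $\Gamma$'' are related correctly under $p$ so that every expression $p(\Tau\rhd\Sigma)$, $p(\Tau)\star p(\Sigma)\star p(\Tau)^{-1}$, etc., is typed consistently (lives in the right fibres and is a genuine bisection), rather than any deep argument — the algebraic heart is the three-line computation above, which is a faithful transcription of the augmented-rack proof.
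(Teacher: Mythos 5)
Your proposal is correct and follows essentially the same route as the paper's own proof: the induced map $p:\Bis(X)\to\Bis(\Gamma)$, source/target compatibility extracted from the augmentation identity (\ref{augmentation_identity}), the three-line conjugation computation for self-distributivity (which the paper simply defers to Proposition \ref{augmented_rack}), and the unit axioms from condition 2 together with (\ref{augmentation_identity_2}). If anything, you are more careful than the paper on the pointedness check, since your reading $\Tau\rhd 1_m=1_{\underline{\tau}(m)}$ is the one consistent with the source/target compatibility and with the pointed-rackoid axiom of Definition \ref{definition_rackoid}, whereas the paper's literal conclusion $x\rhd 1_m=1_m'$ glosses over this.
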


\begin{proof}
Observe that the first commutative diagram implies that $p$ induces a map
$$p:{\rm Bis}(X)\to {\rm Bis}(\Gamma)$$
between sets of  bisections. Observe further that the augmentation identity
(\ref{augmentation_identity}) implies that
$s(g\cdot x)=\underline{g}(s(x))$ and $t(g\cdot x)=\underline{g}(t(x))$ where 
$\underline{g}$ is the diffeomorphism associated to the  bisection $g$. 
The autodistributivity relation follows as in the proof of Proposition 
\ref{augmented_rack} from the augmentation identity
(\ref{augmentation_identity}). The map $p$ sends identities to identities, i.e. 
(\ref{augmentation_identity_2}) holds, thus 
for all $m\in M$ and all $x,y\in X$:
$$ 1_m\rhd y\,=\,p(1_m)\cdot y\,=\,1_m'\cdot y\,=\,y, $$
and by hypothesis,
$$ x\rhd 1_m\,=\,p(x)\cdot 1_m'\,=\,1_m'. $$
\end{proof} 

A context where this can be applied is the following; we use here the non-unitary version of 
a rackoid and the corresponding non-unitary version of the above theorem, formulated in terms of 
semi-precategories instead of precategories.  

Let $q:Y\to M$ be a surjective map.  
Let $ \Gamma$ be a groupoid $\xymatrix{\Gamma\ar@<2pt>[r]^{s'}\ar@<-2pt>[r]_{t'} & M}$ that acts 
freely and transitively on the fibers of $q$. Consider the fiber product $X:=Y\times_{q,M,q}Y$ and define 
$$ p:X\to \Gamma,\,\,\,\,\,x=(y_1,y_2)\mapsto \gamma, $$
by choosing as $\gamma$ the unique element such that $y_1=\gamma\cdot y_2$. Let a bisection 
$\sigma\in{\rm Bis}(\Gamma)$, seen as a section of the source map $s$, act on $X$ by
$$\sigma\cdot(y_1,y_2)\,:=\,(\sigma(y_1)\cdot y_1,\sigma(y_2)\cdot y_2),$$
where one multiplies with the unique element $\sigma(y)$ in the bisection $\sigma$ such that 
the composition makes sense. By construction,  
$$p(\sigma\cdot(y_1,y_2))\,:=\,\sigma(y_2)p(y_1,y_2)\sigma(y_1)^{-1}.$$
Applying the (non-unitary version of the) above theorem yields a (non-unitary) rackoid structure 
on $X=Y\times_{q,M,q}Y$. We will exploit this example in the smooth framework in section \ref{sec:link}.  

\section{Lie rackoids}  

\subsection{Manifold structure on bisections}

In the following, we will work with smooth precategories (and - without explicitely transcribing 
everything - with smooth semi-precategories). 

\begin{defi}
A {\it smooth precategory} is a pair of smooth manifolds $(\Gamma,M)$ together with surjective 
submersions $s,t:\Gamma\to M$ and a smooth map $\epsilon:M\to\Gamma $ (mapping $ m \in M$ to 
$ 1_m =\epsilon(m) \in \Gamma $) such that 
$s\circ\epsilon=t\circ\epsilon=\id_M$.
\end{defi}

Here for "smooth manifold" we break with the tradition which admits Lie groupoids such that 
$\Gamma$ is a non-necessarily Hausdorff, non-necessarily second countable smooth manifold.
In our setting, "smooth manifold" always means that $\Gamma$
is a Hausdorff, non-necessarily second countable (because $\Gamma$ can be an 
infinite-dimensional manifold) smooth manifold, while
$M$ is supposed to be a Hausdorff, second countable smooth manifold. In fact, we will 
always suppose $M$ to be compact. Here comes the definition of bisections in the 
smooth framework. 

\begin{defi} \label{definition_bisection}
A {\it smooth bisection} of a smooth precategory 
$\xymatrix{\Gamma\ar@<2pt>[r]^{s}\ar@<-2pt>[r]_{t} & M}$ is
 a  bisection $\Sigma\subset\Gamma$ such that the associated right inverse to $s:\Gamma\to M$
is a smooth map and such that the bijective map $\underline{\sigma}$ is a diffeomorphism. 
\end{defi}

Notice that smooth bisections are precisely the submanifolds of $\Gamma$ to which the 
restriction of both $s$ are $t$ are diffeomorphism onto $M$.

In the special case where $\xymatrix{\Gamma\ar@<2pt>[r]^{s}
\ar@<-2pt>[r]_{t} & M}$ is a Lie groupoid, the smooth bisections from the above 
definition are exactly the smooth bisections we have discussed earlier. 
  
Let us still denote by ${\rm Bis}(\Gamma)$ the set of all  
smooth bisections.
We also still denote by $\underline{\sigma}=t\circ\sigma : m\to t(\sigma(m))$ 
the corresponding diffeomorphism of $M$.

Let us show that the set of (smooth) bisections of a smooth precategory (and also of a smooth semi-precategory)
has the structure of an infinite-dimensional manifold. This structure is closely related to the Lie 
group structure on the set of bisection of a Lie groupoid by Schmeding and Wockel, see \cite{SchWoc}.  

\begin{prop}\label{prop:Frechet}
Let $\xymatrix{\Gamma\ar@<2pt>[r]^{s}\ar@<-2pt>[r]_{t} & M}$ be a smooth precategory with compact 
base manifold $M$. Then the set of bisections $Bis(\Gamma)$ carries a structure of a Fr\'echet manifold. 
\end{prop}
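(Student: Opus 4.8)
The plan is to realize $\Bis(\Gamma)$ as an \emph{open subset} of the Fr\'echet manifold of smooth sections of the source map $s$, thereby reducing the statement to two standard facts from the differential geometry of mapping spaces: that the smooth sections of a surjective submersion over a compact base form a Fr\'echet manifold, and that the diffeomorphisms of a compact manifold form an open subset of its space of smooth self-maps.

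First I would equip the space $\Gamma^\infty(s)$ of smooth sections of $s$ directly with charts. Since $s$ is a surjective submersion, its vertical subbundle $T^s\Gamma:=\ker ds\subset T\Gamma$ is a genuine subbundle; fixing a fibre-preserving local addition on $\Gamma$ (equivalently a spray tangent to the $s$-fibres), which exists automatically when $\Gamma$ is finite dimensional and must be assumed or constructed when it is infinite dimensional, the assignment $X\mapsto\exp_\sigma(X)$ sends a small section $X\in\Gamma^\infty(\sigma^*T^s\Gamma)$ to a nearby section of $s$. Because $M$ is compact, each model space $\Gamma^\infty(\sigma^*T^s\Gamma)$ is a Fr\'echet space, and these maps furnish an atlas whose chart changes are smooth by the exponential law for mapping spaces. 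This is exactly the construction of Schmeding--Wockel \cite{SchWoc}, here adapted from Lie groupoids to arbitrary smooth precategories, so $\Gamma^\infty(s)$ is a Fr\'echet manifold.

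Next I would dispose of the bisection condition. By Definition \ref{definition_bisection} a smooth section $\sigma$ of $s$ is a bisection precisely when $\underline{\sigma}=t\circ\sigma$ is a diffeomorphism of $M$. Post-composition with the smooth map $t$ defines a smooth (in particular continuous) map $\Gamma^\infty(s)\to C^\infty(M,M)$, $\sigma\mapsto t\circ\sigma$. Since $M$ is compact, ${\rm Diff}(M)$ is open in $C^\infty(M,M)$: a self-map of a compact manifold that is $C^1$-close to a diffeomorphism is again a diffeomorphism. Hence $\Bis(\Gamma)$ is the preimage of the open set ${\rm Diff}(M)$ under a continuous map, so it is open in $\Gamma^\infty(s)$; an open subset of a Fr\'echet manifold is again a Fr\'echet manifold, which proves the claim.

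The main obstacle is the first step: setting up the manifold-of-mappings machinery rigorously, in particular producing the charts $\Gamma^\infty(\sigma^*T^s\Gamma)$ and verifying smoothness of the transition maps. This is where the submersion hypothesis on $s$ (so that $T^s\Gamma$ is a subbundle and the fibre-preserving local addition exists) and the compactness of $M$ (needed both for Fr\'echet completeness of the section spaces and for the exponential-law smoothness statements) are essential, and where, for infinite-dimensional $\Gamma$, one must supply a local addition by hand. Once this is in place, the openness argument and the conclusion are formal.
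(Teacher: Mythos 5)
Your proposal is correct and takes essentially the same route as the paper: the paper also realizes $\Bis(\Gamma)$ as the preimage $(t_*)^{-1}({\rm Diff}(M))$ of the open subset ${\rm Diff}(M)\subset{\mathcal C}^{\infty}(M,M)$ inside the Fr\'echet manifold of smooth sections of $s$, hence as an open submanifold. The only difference is one of packaging: where you build the charts on the section space by hand via a fibrewise local addition, the paper simply cites Proposition 10.10 of \cite{Mic} for the submanifold structure on sections and Corollary 5.7 of \cite{Mic} for the openness of ${\rm Diff}(M)$.
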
 

\begin{proof}
By Proposition 10.10 in \cite{Mic}, the space of all sections $S_s(M,\Gamma)$ of the surjective submersion 
$s:\Gamma\to M$ is a splitting submanifold of the Fr\'echet manifold ${\mathcal C}^{\infty}(M,\Gamma)$
(equipped with the ${\mathcal C}^{\infty}$ topology). 

On the other hand, the composition with the smooth target map $t$ is a smooth map 
$$t_*:{\mathcal C}^{\infty}(M,\Gamma)\to{\mathcal C}^{\infty}(M,M),$$
and this remains true for its restriction to the submanifold $S_s(M,\Gamma)$. By Corollary 5.7
of \cite{Mic}, the subgroup of diffeomorphisms ${\rm Diff}(M)\subset{\mathcal C}^{\infty}(M,M)$
is open and acquires thus its Fr\'echet manifold structure. By construction, the set 
$Bis(\Gamma)=(t_*)^{-1}({\rm Diff}(M))$ is therefore an open submanifold of $S_s(M,\Gamma)$, and thus 
an open submanifold of ${\mathcal C}^{\infty}(M,\Gamma)$.
\end{proof}   

\begin{cor}  \label{families_of_bisections}
With respect to this manifold structure on $Bis(\Gamma)$, a family of bisections $\sigma_{s}$ with $s$ in a 
neighborhood $ {\mathcal U}$ of $0$ in ${\mathbb R}^n $  is smooth if and only if the function 
$ (u,m) \to \sigma_u (m) $ is a smooth function from ${\mathcal U} \times M $ to $ \Gamma$.
\end{cor}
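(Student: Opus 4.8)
The plan is to deduce the corollary from the \emph{exponential law} (Cartesian closedness) for spaces of smooth maps, which belongs to the same circle of results of \cite{Mic} that underlies Proposition \ref{prop:Frechet}; since the parameter domain ${\mathcal U}$ is finite dimensional, this is the most favourable instance of that law. First I would record, from the proof of Proposition \ref{prop:Frechet}, the chain of inclusions
$$Bis(\Gamma)\ \subseteq\ S_s(M,\Gamma)\ \subseteq\ {\mathcal C}^{\infty}(M,\Gamma),$$
in which $Bis(\Gamma)$ is \emph{open} in $S_s(M,\Gamma)$ and $S_s(M,\Gamma)$ is a \emph{splitting} submanifold of ${\mathcal C}^{\infty}(M,\Gamma)$. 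The goal is to transfer the smoothness test for the family $u\mapsto\sigma_u$ all the way to the ambient mapping space ${\mathcal C}^{\infty}(M,\Gamma)$.

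Next I would argue that a family $\sigma\colon{\mathcal U}\to Bis(\Gamma)$, $u\mapsto\sigma_u$, with ${\mathcal U}$ open in ${\mathbb R}^n$, is smooth into $Bis(\Gamma)$ if and only if it is smooth as a map into ${\mathcal C}^{\infty}(M,\Gamma)$. Indeed, since $Bis(\Gamma)$ is open in $S_s(M,\Gamma)$ and the family already takes values there, smoothness into $Bis(\Gamma)$ coincides with smoothness into $S_s(M,\Gamma)$; and since $S_s(M,\Gamma)$ is a splitting submanifold, its inclusion is a smooth embedding that detects smoothness, so smoothness into $S_s(M,\Gamma)$ is equivalent to smoothness of the composite into ${\mathcal C}^{\infty}(M,\Gamma)$. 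This reduces the whole question to the ambient mapping space.

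It then remains to invoke the exponential law: a map $f\colon{\mathcal U}\to{\mathcal C}^{\infty}(M,\Gamma)$ is smooth if and only if its adjoint $f^{\vee}\colon{\mathcal U}\times M\to\Gamma$, $(u,m)\mapsto f(u)(m)$, is smooth. Here the compactness of $M$ enters, exactly as in Proposition \ref{prop:Frechet}. Taking $f(u)=\sigma_u$, the adjoint is precisely $(u,m)\mapsto\sigma_u(m)$, and the two directions of the corollary fall out simultaneously.

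The step I expect to be the genuine obstacle is the applicability of the exponential law when $\Gamma$ is infinite dimensional, so that ${\mathcal C}^{\infty}(M,\Gamma)$ is only a Fr\'echet (in general merely convenient) manifold: one must check that the version of Cartesian closedness drawn from \cite{Mic} admits such a codomain, and that the splitting-submanifold inclusion $S_s(M,\Gamma)\hookrightarrow{\mathcal C}^{\infty}(M,\Gamma)$ really does detect smoothness in the stated way. Should the direct citation prove delicate, the alternative is to work in a chart around a fixed $\sigma_0$, where a neighbouring bisection is encoded as a section $\xi$ of the pullback bundle $\sigma_0^{*}T\Gamma$ along $\sigma_0$; the claim then reduces to the corresponding exponential law for sections of a vector bundle, namely that $u\mapsto\xi_u$ is smooth precisely when $(u,m)\mapsto\xi_u(m)$ is jointly smooth.
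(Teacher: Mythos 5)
Your proposal is correct and follows essentially the same route as the paper, whose entire proof is an appeal to the exponential law (Theorem A of \cite{Glo}); your preliminary reduction through the open and splitting submanifold inclusions $Bis(\Gamma)\subseteq S_s(M,\Gamma)\subseteq{\mathcal C}^{\infty}(M,\Gamma)$ just makes explicit what the paper leaves implicit. The obstacle you flag at the end --- whether the exponential law survives an infinite-dimensional codomain $\Gamma$ --- is precisely why the paper cites Alzaareer--Schmeding \cite{Glo} rather than \cite{Mic}, as that reference establishes Cartesian closedness in the required generality.
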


\begin{proof}
This follows immediately from the so-called exponential law, see Theorem A in \cite{Glo}.
\end{proof} 

\subsection{Definition of Lie rackoids}

Here comes the definition of a Lie rackoid, i.e. the smooth version of a rackoid.
We suppose in the following definition the set of bisections ${\rm Bis}(\Gamma)$
to be endowed with the Fr\'echet manifold structure defined in the preceding subsection.  
 
\begin{defi}
A Lie rackoid is a smooth precategory $\xymatrix{\Gamma\ar@<2pt>[r]^{s}
\ar@<-2pt>[r]_{t} & M}$ with a smooth composition law 
$\rhd:(\Sigma,\gamma)\mapsto\Sigma\rhd\gamma\in\Gamma$ for all
bisections $\Sigma\in{\rm Bis}(\Gamma)$ and all $\gamma\in\Gamma$ such that 
$\Sigma\rhd-:\Gamma\to\Gamma$
is a smooth diffeomorphism for all $\Sigma\in{\rm Bis}(\Gamma)$. 

This composition law
is supposed to satisfy the self-distributivity relation
\begin{equation}   \label{*}
\Sigma\rhd(\Tau\rhd \gamma)=(\Sigma\rhd\Tau)\rhd(\Sigma\rhd \gamma)
\end{equation}
for all $\Sigma\in{\rm Bis}(\Gamma)$, $\Tau\in{\rm Bis}(\Gamma)$ and all 
$\gamma\in\Gamma$. 

The composition law is supposed to be compatible with the source and target map
in the sense that 
$$s(\Sigma\rhd\gamma)=\underline{\sigma}(s(\gamma)),\,\,\,\,\,\,
t(\Sigma\rhd\gamma)=\underline{\sigma}(t(\gamma))$$
for all $\Sigma\in{\rm Bis}(\Gamma)$ and all $\gamma\in\Gamma$.

Furthermore, the composition law should
satisfy $1_M\rhd\gamma=\gamma$ with $1_M=\epsilon(M)$ and 
$\Sigma\rhd 1_m=1_{\underline{\sigma}(m)}$ for all $m\in M$.   
\end{defi}

Observe that a Lie rackoid is not a rackoid in the sense of Definition 
\ref{definition_rackoid} as the rack product is only defined on the set of smooth
bisections and not on the set of all bisections.  

\begin{rem}   \label{remark_global}
We have defined a Lie rackoid only using global bisections, because we wanted to 
avoid that the rackoid-product of a bisection $\Sigma$ and a $\gamma\in\Gamma$ depends
only on the germs of the bisection at the source and the target of $\gamma$. 
In our examples, it 
turns out to depend on the whole bisection. Nevertheless, the corresponding local 
definition (i.e. using only local bisections) for
a Lie rackoid also makes sense and gives a (a priori) different version of Lie rackoid
where the product does only depend on the germs of the bisections at source and target.  
\end{rem}

As before, we have:

\begin{prop}
Let $\Gamma$ be a Lie rackoid over $M$ and $m\in M$ be some element such that $\Gamma_m^m\not=\emptyset$.
Assume that there is a smooth bisection through each point $ \gamma' \in \Gamma_m^m $ 
and that $\Sigma_1 \rhd \gamma = \Sigma_2 \rhd \gamma$ for every bisections 
$\Sigma_1,\Sigma_2 $ through $\gamma'$, then the isotropy sets
$\Gamma^m_m$ become Lie racks via the induced operation. 
\end{prop}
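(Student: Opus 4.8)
The plan is to read the statement as the smooth refinement of its set-theoretic counterpart. The underlying rack axioms on $\Gamma_m^m$ are exactly those of the corresponding proposition for plain rackoids above: for $\gamma'\in\Gamma_m^m$ one sets $\gamma'\rhd\gamma:=\Sigma'\rhd\gamma$ for any smooth bisection $\Sigma'$ through $\gamma'$, this is independent of the choice of $\Sigma'$ by hypothesis, and it lands again in $\Gamma_m^m$ because $\underline{\sigma'}(m)=t(\sigma'(m))=t(\gamma')=m$ forces $s(\Sigma'\rhd\gamma)=\underline{\sigma'}(m)=m=t(\Sigma'\rhd\gamma)$; the self-distributivity (\ref{*}) descends since $\Sigma_1\rhd\Sigma_2$ is a bisection through $\gamma_1\rhd\gamma_2$. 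Thus the only genuinely new content is compatibility with the differentiable structure.

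First I would fix the manifold structure on $\Gamma_m^m=s^{-1}(m)\cap t^{-1}(m)$: since $s$ and $t$ are surjective submersions whose fibres I take to meet transversally at $m$ (as they do for the isotropy groups of a Lie groupoid, our guiding example), $\Gamma_m^m$ is an embedded submanifold of $\Gamma$. For $\gamma_1\in\Gamma_m^m$ and a smooth bisection $\Sigma_1$ through it we have $\underline{\sigma_1}(m)=m$, so the source--target compatibility shows that the diffeomorphism $\Sigma_1\rhd-\colon\Gamma\to\Gamma$ maps $\Gamma_m^m$ bijectively onto itself; the restriction of a diffeomorphism preserving an embedded submanifold is a diffeomorphism of it, whence $\gamma_1\rhd-$ is a diffeomorphism of $\Gamma_m^m$. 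Finally the unital axioms give $1_m\rhd\gamma'=\gamma'$ and $\gamma'\rhd 1_m=1_{\underline{\sigma'}(m)}=1_m$ for $\gamma'\in\Gamma_m^m$, so $1_m$ is a distinguished point and $\Gamma_m^m$ is pointed.

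The real work is the smoothness of $(\gamma_1,\gamma_2)\mapsto\gamma_1\rhd\gamma_2$, and the key step is to promote the pointwise hypothesis into a smoothly varying family of bisections. I would consider the evaluation map
\[
{\rm ev}_m\colon {\rm Bis}(\Gamma)\longrightarrow s^{-1}(m),\qquad \Sigma\longmapsto\sigma(m),
\]
and show it is a submersion: at $\sigma_0$ the tangent space to ${\rm Bis}(\Gamma)$ (an open submanifold of the sections of $s$ by Proposition \ref{prop:Frechet}) consists of $s$-vertical vector fields $X$ along $\sigma_0$, and $d\,{\rm ev}_m(X)=X(m)$; since any vector in $T_{\gamma'}s^{-1}(m)$ is the value at $m$ of such an $X$ (extend it by a bump function, using compactness of $M$), ${\rm ev}_m$ is a submersion onto the finite-dimensional fibre and hence admits a local smooth section $\gamma'\mapsto\Sigma_{\gamma'}$ near any point of $\Gamma_m^m$ with $\sigma_{\gamma'}(m)=\gamma'$. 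By well-definedness $\gamma_1\rhd\gamma_2=\Sigma_{\gamma_1}\rhd\gamma_2$, and smoothness then follows from the smoothness of the Lie rackoid composition $\rhd\colon{\rm Bis}(\Gamma)\times\Gamma\to\Gamma$ composed with the smooth family $\gamma_1\mapsto\Sigma_{\gamma_1}$, together with the characterisation of smooth families of bisections in Corollary \ref{families_of_bisections}.

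The main obstacle is precisely this last construction: extracting a local smooth section of ${\rm ev}_m$ in the Fr\'echet category, where the pointwise existence of bisections does not a priori furnish smooth dependence. This is where the manifold structure of Proposition \ref{prop:Frechet} and an implicit-function/submersion argument onto the finite-dimensional target $s^{-1}(m)$ are essential. A secondary technical point is ensuring that $\Gamma_m^m$ really is an embedded submanifold (transversality of the $s$- and $t$-fibres at $m$), without which a Lie rack structure on it would be vacuous. Granting these analytic inputs, the rack identities themselves transfer verbatim from the set-theoretic case.
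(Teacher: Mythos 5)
The paper itself offers no proof of this proposition: it is introduced by ``As before, we have:'' as the verbatim smooth analogue of the discrete isotropy proposition of Section 2.2, which is likewise stated without proof. So there is no argument of the authors to compare yours against, and your proposal supplies exactly what the paper leaves implicit. Your transfer of the rack axioms is correct: well-definedness is the hypothesis, closure of $\Gamma_m^m$ under the operation follows from the source/target compatibility because $\underline{\sigma'}(m)=t(\gamma')=m$, self-distributivity descends because $\Sigma_1\rhd\Sigma_2$ is again a smooth bisection passing through $\Sigma_1\rhd\gamma_2$, and the unit axioms applied to the bisection $\epsilon(M)$ give pointedness. You also correctly isolate the only genuinely smooth content: (i) a manifold structure on $\Gamma_m^m$, and (ii) joint smoothness of the induced operation, which needs a locally smooth choice $\gamma_1\mapsto\Sigma_{\gamma_1}$ of bisections through nearby points.

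Two caveats on how you handle these. First, the submersion step is not valid as literally stated: in the Fr\'echet category, surjectivity of $d\,{\rm ev}_m$ does not by itself produce local sections, since there is no inverse function theorem. It can be repaired precisely because the target is finite-dimensional: choose compactly supported $s$-vertical vector fields $\tilde v_1,\dots,\tilde v_k$ on $\Gamma$ whose values at $\gamma'$ span $T_{\gamma'}s^{-1}(m)$, and move the fixed bisection $\Sigma'$ by the composed flows $\Phi_t$, $t\in\R^k$ small; by Corollary \ref{families_of_bisections}, openness of ${\rm Diff}(M)$ in ${\mathcal C}^{\infty}(M,M)$ and compactness of $M$, this is a smooth finite-dimensional family of bisections, and the ordinary finite-dimensional inverse function theorem applied to $t\mapsto\Phi_t(\gamma')={\rm ev}_m(\Phi_t\circ\sigma')$ inverts to give your local section $\delta\mapsto\Sigma_\delta$. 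This is exactly the technique of the paper's Lemma \ref{lem:existsbisections}, with parameter space a neighborhood in $s^{-1}(m)$ instead of an interval. Second, the transversality you invoke so that $\Gamma_m^m$ is an embedded submanifold is genuinely an additional hypothesis: nothing in the Lie rackoid axioms implies it, and the constant-rank argument that makes isotropy groups of Lie groupoids smooth (via left and right translations) has no analogue here, since bisections need not act transitively on $\Gamma_m^m$. The paper is silent on this point, so the statement should be read as presupposing, or you should add, a smooth structure on $\Gamma_m^m$ compatible with the embedding; with that proviso and the flow-based repair above, your proof is complete.
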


Note that the hypothesis of possessing enough smooth bisections 
(i.e. at least one through every point of the groupoid) is for example fulfilled
in the case of a source-connected Lie groupoid, see \cite{CLZ}.

\section{Bundles of Lie racks as Lie rackoids}

Let us here discuss a very explicit example of a Lie rackoid,
namely a bundle of Lie racks. In this case we have $s=t$
for the source and target maps of the underlying precategory. 

The following proposition is straighforward.
\begin{prop}
A bundle of Lie racks endows a natural Lie rackoid structure, 
with respect to the product given by 
$\Sigma \rhd \gamma  =  \sigma (m) \rhd_m \gamma$ for all $m \in M$, with $\gamma$
an arbitrary point in the fiber over $m$, fiber whose rack product we denote by 
$\rhd_m $, and $\Sigma $ a bisection, seen as a section $\sigma$ of the 
projection onto the base manifold.
\end{prop}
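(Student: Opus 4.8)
The plan is to reduce every axiom of a Lie rackoid to the corresponding fiberwise statement for the Lie rack structure on each fiber $\Gamma_m = s^{-1}(m) = t^{-1}(m)$. The crucial preliminary observation is that, because $s = t = \pi$ for a bundle of racks, any bisection $\Sigma$ is simply (the image of) a smooth section $\sigma$ of the projection $\pi\colon \Gamma \to M$, and consequently $\underline{\sigma} = t \circ \sigma = s \circ \sigma = \id_M$. Thus every bisection acts fiber-preservingly, and the compatibility with source and target, namely $s(\Sigma \rhd \gamma) = \underline{\sigma}(s(\gamma)) = s(\gamma)$, holds automatically since $\sigma(m) \rhd_m \gamma$ lies in $\Gamma_m$ whenever $\gamma \in \Gamma_m$; the same holds for $t$.

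First I would check well-definedness and the unit conditions. For $\gamma \in \Gamma_m$ the element $\Sigma \rhd \gamma = \sigma(m) \rhd_m \gamma$ is formed using the pointed Lie rack product $\rhd_m$ on $\Gamma_m$, so it again lies in $\Gamma_m$. Taking $\Sigma = 1_M = \epsilon(M)$, whose section is $m \mapsto 1_m$, gives $1_M \rhd \gamma = 1_m \rhd_m \gamma = \gamma$ by the pointedness axiom $1 \rhd x = x$; and $\Sigma \rhd 1_m = \sigma(m) \rhd_m 1_m = 1_m = 1_{\underline{\sigma}(m)}$ by the axiom $x \rhd 1 = 1$.

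The heart of the proof is self-distributivity, which reduces to a one-line fiberwise computation. Since $\underline{\sigma \rhd \tau} = \underline{\sigma} \circ \underline{\tau} \circ \underline{\sigma}^{-1} = \id_M$, the bisection $\Sigma \rhd \Tau$ is the section $m \mapsto \sigma(m) \rhd_m \tau(m)$. Hence, for $\gamma \in \Gamma_m$,
\[
(\Sigma \rhd \Tau) \rhd (\Sigma \rhd \gamma) = \big(\sigma(m) \rhd_m \tau(m)\big) \rhd_m \big(\sigma(m) \rhd_m \gamma\big) = \sigma(m) \rhd_m \big(\tau(m) \rhd_m \gamma\big) = \Sigma \rhd (\Tau \rhd \gamma),
\]
where the middle equality is exactly the self-distributivity of the Lie rack $\Gamma_m$.

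Finally comes smoothness, where the only genuine (if mild) work lies. Gathering the fiberwise products into a single smooth map $\Gamma \times_M \Gamma \to \Gamma$, $(x, y) \mapsto x \rhd_{\pi(x)} y$, the composition law is the composite $\gamma \mapsto (\sigma(\pi(\gamma)), \gamma) \mapsto \sigma(\pi(\gamma)) \rhd_{\pi(\gamma)} \gamma$, which is smooth because $\pi$, $\sigma$ and the fiberwise product are. Joint smoothness of $(\Sigma, \gamma) \mapsto \Sigma \rhd \gamma$ then follows from Corollary \ref{families_of_bisections}: for a smooth family $\sigma_u$, the map $(u, \gamma) \mapsto \sigma_u(\pi(\gamma)) \rhd_{\pi(\gamma)} \gamma$ is again a composite of smooth maps. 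That $\Sigma \rhd -$ is a diffeomorphism of $\Gamma$ is checked fiberwise: on each $\Gamma_m$ it is the Lie rack diffeomorphism $\sigma(m) \rhd_m -$, and the fiberwise inverses assemble into the global inverse $\gamma \mapsto \sigma(\pi(\gamma)) \rhd_{\pi(\gamma)}^{-1} \gamma$. The main obstacle, such as it is, is justifying the global smoothness of this inverse, which rests on the fiberwise rack-inverse operation of the bundle being smooth --- part of what it means for $\Gamma$ to be a smooth bundle of Lie racks.
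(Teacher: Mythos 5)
Your proof is correct and takes the only approach the paper has in mind: the paper in fact offers no proof at all, simply declaring the proposition straightforward, and your argument supplies exactly the omitted details --- the observation that $s=t$ forces $\underline{\sigma}=\id_M$ so every bisection acts fiber-preservingly, the reduction of the unit and self-distributivity axioms to the pointed-rack axioms of each fiber, and the smoothness remarks. Your caveats about smoothness (of the assembled product on $\Gamma\times_M\Gamma$ and of the fiberwise inverses) are handled at the same level of rigor the paper itself uses elsewhere, e.g.\ in its treatment of the conjugation rackoid of a Lie groupoid via Corollary \ref{families_of_bisections}.
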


Below is an example of non-trivial bundle of Lie rack.

The idea is here to construct a ``jump deformation'' from a 
$4$-dimensional very elementary example of a Lie rack 
(due to S. Covez in his PhD thesis (p.78)). 
Namely, for $t\in\R$, let 
${\mathfrak g}_t:=\R^4$ 
be the Leibniz algebra defined by the bilinear map
$$[,]_t\,:\,{\mathfrak g}_t\times {\mathfrak g}_t\to{\mathfrak g}_t$$
which is set to be 
$$[(x_1,x_2,x_3,x_4),(y_1,y_2,y_3,y_4)]_t:=(0,0,0,tx_1y_1+x_1y_2-x_2y_1+
tx_2y_2+tx_3y_3).$$
Observe that the terms which render this Leibniz bracket non-Lie are 
multiplied by $t\in\R$, so for $t=0$, this is a genuine Lie algebra, while 
for $t\not=0$, it is a non-Lie Leibniz algebra. As Covez explains, 
${\mathfrak g}_1$ (and thus also ${\mathfrak g}_t$ for $t\not=0$) is 
a non-split Leibniz algebra, i.e. it is not isomorphic to a hemisemidirect product of 
a Lie algebra and a representation. 

Covez integrates the Leibniz bracket $[,]_t$ on $\R^4$ into a Lie rack 
structure on $\R^4$. Our idea is to do this here ``in families'', the 
non-triviality of our Leibniz algebra bundle coming from the fact that 
it degenerates to a Lie algebra at $t=0$. Obviously, the two parts 
$({\mathfrak g}_t)_{\rm Lie}$
and $Z_L({\mathfrak g}_t)$ integrate to the trivial Lie racks 
$\R^3$ and $\R$ respectively. 

Writing down the explicit formula for the rack cocycle $f_t$ obtained 
from integrating the Leibniz 
cocycle $\omega_t$, it turns out that for all 
$a=(a_1,a_2,a_3,a_4), b=(b_1,b_2,b_3,b_4)\in\R^4$, $f_t(a,b)=\omega_t(a,b)$,
the explicit formula for the Lie rack structure $\rhd_t$ on $\R^4$ is
$$(a_1,a_2,a_3,a_4)\rhd_t(b_1,b_2,b_3,b_4)=(b_1,b_2,b_3,ta_1b_1+ta_2b_2+
ta_3b_3+a_1b_2-a_2b_1+b_4).$$

Observe that for $t=0$ the rack structure is the conjugation in the 
standard semi-direct product group
$$(a_1,a_2,a_3,a_4)\cdot(b_1,b_2,b_3,b_4)\,=\,(a_1+b_1,a_2+b_2,a_3+b_3,a_4+b_4+a_1b_2).$$    

\begin{theo}
The manifold $\R^5$ with the above structure $\rhd_t$ and the projection onto $\R$ given
by $(a_1,a_2,a_3,a_4,t)\mapsto t$ is a (non-trivial) bundle of Lie racks, i.e. it is not 
isomorphic (as Lie racks) to a direct product of the trivial Lie rack $\R$ 
with some Lie rack $\R^4$.    
\end{theo}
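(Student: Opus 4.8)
The plan is to split the statement into two independent parts: first that $(\R^5,\rhd_t)$ together with the projection $(a_1,a_2,a_3,a_4,t)\mapsto t$ is a bundle of Lie racks, and second that this bundle is non-trivial. For the first part I would verify that for each fixed $t$ the operation $\rhd_t$ is a pointed Lie rack structure on $\R^4$. The distinguished point is $0=(0,0,0,0)$: substituting shows $0\rhd_t b=b$ and $a\rhd_t 0=0$ for all $a,b$. Each left translation $b\mapsto a\rhd_t b$ is a polynomial bijection of $\R^4$, since it is the identity on the first three coordinates and an affine map with unit coefficient in $b_4$ on the last, and self-distributivity is exactly the identity verified by Covez when integrating $[\,,\,]_t$ (or, equivalently, a direct polynomial check). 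Because every structure map is polynomial in $(a,b,t)$, the family $\rhd_t$ depends smoothly on $t$, so the preceding proposition on bundles of Lie racks applies verbatim and equips $\R^5$ with the asserted Lie rackoid structure over $\R$.

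For the second part the key is an isomorphism invariant separating the fibers. I would attach to each fiber $(\R^4,\rhd_t)$ its tangent Leibniz algebra at the unit $0$, computed as the mixed second derivative of the rack product along the linear curves $s\mapsto s\,a$ and $u\mapsto u\,b$. A short computation gives $\frac{\partial^2}{\partial s\,\partial u}\big|_{0}(s\,a)\rhd_t(u\,b)=(0,0,0,\,t a_1b_1+t a_2b_2+t a_3b_3+a_1b_2-a_2b_1)$, which is precisely the bracket $[\,,\,]_t$ of ${\mathfrak g}_t$. By the functoriality of the tangent construction for pointed Lie racks (Theorem 3.4 in \cite{Kin}), a pointed rack isomorphism between two fibers induces an isomorphism of their tangent Leibniz algebras, so the isomorphism class of ${\mathfrak g}_t$ is a genuine invariant of the fiber.

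It then remains to see that ${\mathfrak g}_0$ and ${\mathfrak g}_t$ with $t\neq0$ are non-isomorphic. The squaring map $x\mapsto[x,x]_t=(0,0,0,\,t(x_1^2+x_2^2+x_3^2))$ vanishes identically for $t=0$ but not for $t\neq0$; equivalently $[\,,\,]_0$ is antisymmetric while $[\,,\,]_t$ is not. Since the vanishing of the squaring map is preserved by Leibniz algebra isomorphisms, we get ${\mathfrak g}_0\not\cong{\mathfrak g}_t$ for $t\neq0$. If the bundle were isomorphic to a direct product $\R\times F$ of the trivial rack $\R$ with a fixed Lie rack $F$ on $\R^4$, the isomorphism would cover a diffeomorphism of the base and restrict on each fiber to a pointed rack isomorphism onto $F$; hence all fibers would be mutually isomorphic and all their tangent Leibniz algebras isomorphic to that of $F$, contradicting ${\mathfrak g}_0\not\cong{\mathfrak g}_1$.

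The main obstacle is this last reduction: one must pin down the notion of isomorphism of bundles of Lie racks and check that it restricts fiberwise to pointed rack isomorphisms over a base diffeomorphism, so that triviality genuinely forces the isomorphism type of the tangent Leibniz algebra to be constant in $t$. Once this naturality is in place, the Lie-versus-non-Lie dichotomy of the tangent algebras closes the argument, while the rack-theoretic verifications of the first part are routine polynomial computations.
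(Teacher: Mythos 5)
Your proof is correct and takes essentially the same route as the paper's: the paper's two-line argument is that triviality would force the fiberwise isomorphism type to be constant, while the fiber at $t=0$ is a conjugation rack of a Lie group and the others are not. Your explicit tangent-Leibniz-algebra computation (recovering ${\mathfrak g}_t$, antisymmetric precisely when $t=0$) together with the fiberwise-restriction argument is just a rigorous expansion of the invariance step the paper leaves implicit.
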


\pr The proof is very simple: In case the bundle was trivial, the isomorphism type of the 
Lie rack should be constant. But this is not the case, as the Lie rack degenerates to a 
conjugation rack coming from a Lie group at $t=0$.\fin 

\section{Lie rackoids generalize Lie groupoids} 

In this section, we highlight two propositions which show in which sense Lie 
rackoids generalize Lie groupoids: Any Lie groupoid gives rise to a Lie
rackoid via conjugation, and any Lie rackoid defines a tangent Leibniz
algebroid. 

\subsection{The Lie rackoid underlying a Lie groupoid}  

In the smooth framework, we have the following proposition which strenghtens 
Proposition \ref{groupoid_as_rackoid}. 

\begin{prop}
A Lie groupoid defines a Lie rackoid via conjugation.
\end{prop}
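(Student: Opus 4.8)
We must show that a Lie groupoid $\Gamma$ over a compact base $M$, equipped with the conjugation operation $\Sigma \rhd \gamma = \Sigma \star \{\gamma\} \star \Sigma^{-1}$, is a Lie rackoid. By Proposition \ref{groupoid_as_rackoid} we already know this operation satisfies the algebraic axioms of a (pointed) rackoid. So the work is entirely in the adjective \emph{Lie}: we must verify the smoothness requirements in the definition, namely that $\Sigma \rhd -$ is a diffeomorphism of $\Gamma$ for each smooth bisection $\Sigma$, that the composition law $(\Sigma,\gamma)\mapsto \Sigma\rhd\gamma$ is jointly smooth on $\mathrm{Bis}(\Gamma)\times\Gamma$, and that the source/target compatibility and unit conditions hold. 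The latter two are immediate from Definition \ref{bisections_in_a_groupoid} and Proposition \ref{groupoid_as_rackoid}, so I would dispatch them in a sentence and concentrate on smoothness.

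**The plan.** The key is to unwind the conjugation formula from Definition \ref{bisections_in_a_groupoid}(d) into an explicit expression in terms of the section $\sigma$ and the groupoid structure maps. For a smooth bisection $\Sigma$ with associated section $\sigma$ and diffeomorphism $\underline\sigma = t\circ\sigma$, one has
$$
\Sigma\rhd\gamma \;=\; \sigma\bigl(\underline\sigma^{-1}(s(\gamma))\bigr)\;\gamma\;\tilde\sigma\bigl(t(\gamma)\bigr),
$$
where $\tilde\sigma(m)=\bigl(\sigma(\underline\sigma^{-1}(m))\bigr)^{-1}$ is the section attached to $\Sigma^{-1}$. This exhibits $\Sigma\rhd-$ as a finite composite of the groupoid multiplication, the inversion, the source and target maps, the section $\sigma$, and the diffeomorphism $\underline\sigma^{-1}$ — all of which are smooth for a Lie groupoid and a smooth bisection. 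First I would observe that $\Sigma\rhd-$ is a smooth bijection of $\Gamma$ whose inverse is $\Sigma^{-1}\rhd-$, given by the analogous formula, hence also smooth; this establishes that each $\Sigma\rhd-$ is a diffeomorphism.

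**The main point: joint smoothness.** The real content is smoothness of the map $\mathrm{Bis}(\Gamma)\times\Gamma\to\Gamma$, $(\Sigma,\gamma)\mapsto\Sigma\rhd\gamma$, where $\mathrm{Bis}(\Gamma)$ carries the Fr\'echet manifold structure of Proposition \ref{prop:Frechet}. Here I would invoke Corollary \ref{families_of_bisections}: to test smoothness it suffices to check that for any smooth finite-dimensional family $u\mapsto\sigma_u$ of bisections (with $u$ in an open set $\mathcal U\subset\mathbb R^n$), the map $(u,\gamma)\mapsto \sigma_u\rhd\gamma$ is smooth as an ordinary map $\mathcal U\times\Gamma\to\Gamma$. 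By Corollary \ref{families_of_bisections} the family being smooth means $(u,m)\mapsto\sigma_u(m)$ is a smooth map $\mathcal U\times M\to\Gamma$; smoothness of $(u,m)\mapsto\underline{\sigma_u}(m)=t(\sigma_u(m))$ follows, and smoothness of $(u,m)\mapsto\underline{\sigma_u}^{-1}(m)$ then follows from smooth dependence of the inverse of a smoothly varying family of diffeomorphisms of the compact manifold $M$. Substituting these into the explicit conjugation formula and composing with the (smooth) groupoid multiplication and inversion, I conclude that $(u,\gamma)\mapsto\sigma_u\rhd\gamma$ is smooth, and hence that the composition law is smooth by the exponential-law characterization.

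**Anticipated obstacle.** I expect the only genuinely delicate step to be the smooth dependence of $\underline{\sigma_u}^{-1}$ on the parameter $u$, i.e. that inverting a smoothly parametrized family of diffeomorphisms of the compact $M$ is again smooth in the Fr\'echet sense. This is precisely where compactness of $M$ is used (the inversion map on $\mathrm{Diff}(M)$ is smooth for $M$ compact, as in the Schmeding--Wockel \cite{SchWoc} and \cite{Mic} framework underlying Proposition \ref{prop:Frechet}), and it is the reason the paper assumes $M$ compact. Everything else reduces to composing finitely many smooth maps, so once this point is granted the remaining verifications are routine, and the source/target compatibility together with the unit axioms $1_M\rhd\gamma=\gamma$ and $\Sigma\rhd 1_m=1_{\underline\sigma(m)}$ read off directly from the conjugation formula.
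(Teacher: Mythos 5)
Your proposal is correct and follows essentially the same route as the paper: the algebraic rackoid axioms are inherited from the conjugation rackoid of Proposition \ref{groupoid_as_rackoid} (the paper invokes associativity of the groupoid product directly), and smoothness of the conjugation law is justified via the exponential-law characterization of Corollary \ref{families_of_bisections}. The paper dispatches the smoothness claim in a single sentence, so your unwinding of the conjugation formula, the identification of $(\Sigma\rhd -)^{-1}$ with $\Sigma^{-1}\rhd -$, and the observation that inversion in $\mathrm{Diff}(M)$ is smooth for compact $M$ simply make explicit the details the paper leaves implicit.
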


\pr A Lie groupoid $\xymatrix{\Gamma\ar@<2pt>[r]^{s}
\ar@<-2pt>[r]_{t} & M}$ has obviously an underlying smooth precategory
$\xymatrix{\Gamma\ar@<2pt>[r]^{s}\ar@<-2pt>[r]_{t} & M}$. The bisections
(with respect to the precategory structure) are then just the ordinary 
bisections of the Lie groupoid (see Definition 3.2 and
\cite{Mack} p. 22).
Observe that for each bisection $\Sigma\in{\rm Bis}(\Gamma)$, we have an inverse 
bisection $\Sigma^{-1}\in{\rm Bis}(\Gamma)$ (see {\it loc. cit.} p. 22), 
still right inverse to the source map,
but such that $t\circ\sigma^{-1}$ is the inverse of the diffeomorphism 
$t\circ\sigma:M\to M$. 

We define for all
$\Sigma\in{\rm Bis}(\Gamma)$ and all $\gamma\in\Gamma$ 
the composition law $\rhd:(\Sigma,\gamma)\mapsto\Sigma\rhd\gamma\in\Gamma$ 
using the conjugation, i.e. 
$\Sigma\rhd\gamma:=\Sigma\gamma\Sigma^{-1}$. It is clear (from the 
associativity of the groupoid operation for composable elements) that $\rhd$
satisfies the self-distributivity relation. It is still more obvious
that $1_M\rhd\gamma=\gamma$ with $1_M  =\epsilon(M)$ and 
$\Sigma\rhd 1_m=1_{\underline{\sigma}(m)}$ for all $m\in M$. 

The operation $\rhd$ is clearly smooth with respect to the Fr\'echet manifold structure 
on ${\rm Bis}(\Gamma)$ by Corollary \ref{families_of_bisections}. \fin

\subsection{From Lie rackoids to Leibniz algebroids}

Recall the definition of a Leibniz algebroid from \cite{ILMP}.

\begin{defi}
A Leibniz algebroid is the data of a vector bundle $\pi:A\to M$ together with a Leibniz
algebra structure on the space of global sections $\Gamma(A)$ and a bundle 
morphism $\rho:A\to TM$
(the anchor) such that $\rho$ (on the level of sections) is a Leibniz morphism and
for all $b,a\in\Gamma(A)$ and all $f\in{\mathcal C}^{\infty}(M)$
\begin{equation}\label{eq:anchor} [b,fa]\,=\,f [b,a]+\rho(b)(f) \, a.\end{equation}
\end{defi}  

Note that, since the bracket is a priori not skew-symmetric, there is no way to express 
$[fs_1,s_2]  $, and it is has no reason to be $\, f[s_1,s_2] \, -\, \rho(s_2)(f) \, s_1$ 
in general. 

From now, and until the end of this section, we assume that $M$ is a compact manifold.

\begin{theo}\label{thm:infinitesimal}
A Lie rackoid over a compact manifold gives rise to a tangent Leibniz algebroid.
\end{theo}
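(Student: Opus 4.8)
The plan is to realize the tangent Leibniz algebroid as the infinitesimal object, at the unit bisection, attached to the pointed Lie rack structure carried by $\Bis(\Gamma)$ itself. First I would set $A := \ker(Ds)\big|_{\epsilon(M)}$, the vector bundle over $M$ whose fibre $A_m = \ker(D_{1_m}s) = T_{1_m}(s^{-1}(m))$ is the tangent space to the source fibre at the unit $1_m$. Since $\Bis(\Gamma)$ is, by Proposition \ref{prop:Frechet}, an open submanifold of the manifold of sections of $s$, and the tangent space to that section manifold at the unit section $\epsilon$ is exactly the space of sections of $\epsilon^{*}\ker(Ds)$, one obtains a canonical identification $\Gamma(A)\cong T_{1_M}\Bis(\Gamma)$, sending a section $a$ to the velocity $\dot\Sigma_0$ of any smooth family $\Sigma_\epsilon$ of bisections with $\Sigma_0=1_M$ and $\dot\Sigma_0=a$ (such families being available through Corollary \ref{families_of_bisections}). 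I would then observe that $(\Bis(\Gamma),\rhd,1_M)$ is a (Fr\'echet) pointed Lie rack: the operation $(\Sigma,\Tau)\mapsto\Sigma\rhd\Tau$ is well defined and smooth (by Definition \ref{definition_rackoid} the map $\Sigma\rhd-$ preserves bisections), each $\Sigma\rhd-$ induces a diffeomorphism of $\Bis(\Gamma)$, self-distributivity is inherited pointwise from that on elements, and the unit axioms $1_M\rhd\Tau=\Tau$ and $\Sigma\rhd 1_M=1_M$ hold (the latter because $\Sigma\rhd 1_m=1_{\underline\sigma(m)}$ and $\underline\sigma$ is a bijection of $M$).

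Granting this, the bracket on $\Gamma(A)$ is the one produced by the Kinyon construction (Theorem 3.4 in \cite{Kin}) applied to the Lie rack $\Bis(\Gamma)$. Concretely, each bisection $\Sigma$ gives an inner diffeomorphism $c_\Sigma:=\Sigma\rhd-$ of $\Gamma$ which fixes the unit section setwise ($c_\Sigma(1_m)=1_{\underline\sigma(m)}$), and which, since $s\circ c_\Sigma=\underline\sigma\circ s$, restricts to a vector-bundle automorphism $\Ad_\Sigma:=Dc_\Sigma\big|_A\colon A\to A$ covering $\underline\sigma$. On sections $\Ad_\Sigma$ acts as the pushforward $(\Ad_\Sigma a)(m')=Dc_\Sigma\big(a(\underline\sigma^{-1}(m'))\big)$, which is precisely the differential at $1_M$ of $\Sigma\rhd-\colon\Bis(\Gamma)\to\Bis(\Gamma)$. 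Differentiating $\Sigma\mapsto\Ad_\Sigma$ along a family representing $b\in\Gamma(A)$ defines $[b,a]:=\frac{d}{d\epsilon}\big|_0\Ad_{\Sigma_\epsilon}a$. The Leibniz identity $[b,[c,a]]=[[b,c],a]+[c,[b,a]]$ then follows, mirroring the algebraic computation in the proof of Proposition \ref{augmented_rack}: self-distributivity says $c_\Sigma\circ c_\Tau=c_{\Sigma\rhd\Tau}\circ c_\Sigma$, i.e. $\Ad_{\Sigma\rhd\Tau}=\Ad_\Sigma\circ\Ad_\Tau\circ\Ad_\Sigma^{-1}$, so that $\Ad$ intertwines the rack conjugation; differentiating this relation to second infinitesimal order yields exactly the Leibniz identity.

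The anchor is the bundle map $\rho\colon A\to TM$ induced by $Dt$, i.e. the infinitesimal counterpart of the map $\Bis(\Gamma)\to{\rm Diff}(M)$, $\Sigma\mapsto\underline\sigma$. Because $\underline{\sigma\rhd\tau}=\underline\sigma\circ\underline\tau\circ\underline\sigma^{-1}$, this map intertwines $\rhd$ with conjugation in ${\rm Diff}(M)$, hence is a morphism of pointed Lie racks; differentiating it at $1_M$ therefore yields a morphism of the associated Leibniz algebras, which is the assertion $\rho([a,b])=[\rho(a),\rho(b)]$, the target bracket being the usual Lie bracket on $\mathcal{X}(M)=T_{\id}{\rm Diff}(M)$. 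Finally, the anchor relation (\ref{eq:anchor}) is obtained by differentiating the scalar-equivariance of $\Ad$: since $\Ad_\Sigma$ is fibrewise linear and covers $\underline\sigma$, one has $\Ad_\Sigma(fa)=(f\circ\underline\sigma^{-1})\,\Ad_\Sigma(a)$ for every $f\in\mathcal{C}^{\infty}(M)$, and differentiating at $\Sigma=1_M$ along $b$ produces the function term from $\frac{d}{d\epsilon}\big|_0(f\circ\underline{\sigma_\epsilon}^{-1})$ together with $f[b,a]$, which pins down $\rho$ as the map induced by $Dt$ on $A$ with the sign convention making (\ref{eq:anchor}) hold. In particular this relation makes the bracket local in its second argument, so that the operation descends to a genuine Leibniz algebroid structure on the bundle $A$ rather than to a mere Leibniz algebra on the module $\Gamma(A)$.

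The main obstacle is analytic rather than algebraic: one must run the Kinyon differentiation in the infinite-dimensional Fr\'echet setting, checking that $\Ad\colon\Bis(\Gamma)\to{\rm End}(\Gamma(A))$ is smooth, that the first derivative defining $[b,a]$ and the mixed second derivatives defining the Leibniz identity exist and combine correctly, and that $[b,a]$ is again a smooth section of $A$. Here I would avoid abstract Fr\'echet calculus and instead reduce every differentiation to ordinary finite-dimensional calculus on $\Gamma$ and $M$ by representing bisections through smooth families $\Sigma_\epsilon$ as in Corollary \ref{families_of_bisections} (the exponential law), for which the compactness of $M$ is used throughout; the bracket, the Leibniz identity and the anchor relation then become pointwise derivative computations on the finite-dimensional manifold $\Gamma$.
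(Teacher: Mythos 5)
Your proposal is correct and takes essentially the same route as the paper: the same bundle $A=\ker(Ts)$ along $\epsilon(M)$, the same identification $T_{1_M}\Bis(\Gamma)\cong\Gamma(A)$ via smooth families of bisections and the exponential law, the same adjoint action $\Ad_\Sigma$ obtained by restricting $T(\Sigma\rhd -)$ to $A$, the bracket as the derivative of $\Sigma\mapsto\Ad_\Sigma a$ at the unit bisection, the Leibniz identity by differentiating the relation $\Ad_\Sigma\circ\Ad_{\Tau}=\Ad_{\Sigma\rhd\Tau}\circ\Ad_\Sigma$ (self-distributivity) twice, and the anchor compatibility from $\Ad_\Sigma(fa)=(f\circ\underline{\sigma}^{-1})\,\Ad_\Sigma(a)$. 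One point where you go slightly beyond the paper's own argument: you verify that the anchor is a morphism into vector fields by differentiating the rack morphism $\Sigma\mapsto\underline{\sigma}$ from $\Bis(\Gamma)$ to ${\rm Diff}(M)$ with conjugation, a condition that the paper's definition of Leibniz algebroid requires but whose verification the paper leaves implicit (modulo the sign bookkeeping forced by the convention $\rho=-Tt|_A$, which you correctly note is pinned down by the anchor relation).
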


We shall  give a more precise statement in Theorem \ref{thm:infinitesimal2} below.

Let $\xymatrix{\Gamma\ar@<2pt>[r]^{s} \ar@<-2pt>[r]_{t} & M}$ be a pointed Lie rackoid. 
Consider  the pull-back $\epsilon^* T \Gamma$ of $T\Gamma \to \Gamma$ through 
$\epsilon: M \hookrightarrow \Gamma$.
By construction, the fiber of $\epsilon^* T \Gamma$ over $m \in M$ is $T_{1_m} M$.
There are therefore two submersive vector bundle maps $Ts,Tt$ from 
$\epsilon^* T \Gamma$ to $TM$, obtained by differentiating 
the source and target maps
 $$ T_{1_m}t : T_{1_m} \Gamma \to T_{m} M \hbox{ and } T_{1_m}s : T_{1_m} \Gamma \to T_{m} M  .$$
We define a vector bundle $A$ that we call \emph{infinitesimal Leibniz algebroid} by considering
 the kernel of $Ts : \epsilon^* T \Gamma \to TM $.
 In equation:
 \begin{equation}\label{eq:fibre}   A:={\rm ker}(Ts) = \coprod_{m\in M}\ker(T_{1_m}s)\subset\coprod_{m\in M}T_{1_m}\Gamma=\epsilon^* T\Gamma.\end{equation}
We then define the \emph{anchor map} to be a vector bundle morphism $\rho$ from $A$ to $TM$ obtained by restricting to 
$A \subset \epsilon^* T\Gamma $ the vector bundle submersion $Tt: \epsilon^* T \Gamma \to TM$, up to a sign.
In equation:
 \begin{equation}\label{eq:ancre}   \rho = - \left. Tt  \right|_{A} .\end{equation}
So far, the construction is parallel to the construction of the Lie algebroid associated to a Lie groupoid.

We shall need the next technical lemma, in which $I$ is a shorthand for the  interval  $]-1,+1[$,
the topology on $\Bis(\Gamma)$, the Fr\'echet manifold topology used in Proposition \ref{prop:Frechet}
and Corollary \ref{families_of_bisections}, is implicitly used to justify the existence of the 
derivative that appears in it:

\begin{lem}\label{lem:existsbisections}
For every section of $A$ seen as a map from $M$ to $T\Gamma$ mapping $m $ to 
$b_m \in A_m \subset T_{1_m} M $, there exists 
a smooth family $(\Sigma_u)_{u \in I} $ of bisections of $\Gamma$ such that
$ \Sigma_0=\epsilon(M)$ and $\left.\frac{\partial}{\partial u} \right|_{u =0} \sigma_u(m) \subset T_{1_m} \Gamma $
coincides with $ b_m$ for all $m \in M$.
Conversely, for any smooth family $(\Sigma_u)_{u \in I}$ in $\Bis(\Gamma)$,  
such that $ \Sigma_0=\epsilon(M)$,
the assignment $m \mapsto \left.\frac{\partial}{\partial u} \right|_{u =0} \sigma_u(m)  $
is a smooth section of $A$.
\end{lem}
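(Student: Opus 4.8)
The plan is to establish both directions of the lemma by reducing everything to the exponential-law characterization of smooth families of bisections given in Corollary~\ref{families_of_bisections}. That corollary says that a family $(\Sigma_u)_{u\in I}$ is smooth as a curve in the Fr\'echet manifold $\Bis(\Gamma)$ if and only if the evaluation map $(u,m)\mapsto\sigma_u(m)$ is smooth as a map $I\times M\to\Gamma$. So the whole lemma is really a statement about ordinary finite-dimensional differential geometry on $\Gamma$, once one passes through this equivalence.

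For the first (existence) direction, I would start from a section $b$ of $A$, i.e. a smooth map $m\mapsto b_m\in\ker(T_{1_m}s)\subset T_{1_m}\Gamma$. The idea is to produce a smooth map $F\:I\times M\to\Gamma$ with $F(0,m)=1_m$, with $\frac{\partial}{\partial u}\big|_{u=0}F(u,m)=b_m$, and such that for each fixed $u$ the map $m\mapsto F(u,m)$ is a smooth bisection. The natural construction is to choose, locally, a vector field on $\Gamma$ (or a spray / a tubular-neighborhood retraction of $\epsilon(M)\subset\Gamma$) extending $b$ and to flow $\epsilon(M)$ along it; alternatively, pick a Riemannian (or any) connection on $\Gamma$ and set $F(u,m)=\exp_{1_m}(u\,b_m)$. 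Because $b_m\in\ker(T_{1_m}s)$, the curve $u\mapsto F(u,m)$ is tangent to the $s$-fiber direction only to first order, so $s$ need not be preserved; the correct fix is to first choose $b$'s extension to be $s$-projectable, or to project $\exp_{1_m}(u\,b_m)$ back onto the correct $s$-fiber using that $s$ is a submersion. The cleanest route is: since $s\:\Gamma\to M$ is a surjective submersion and $\epsilon$ is a section of it, choose the extension of $b$ inside $\ker(Ts)$ as a genuine vector field tangent to the fibers of $s$, whose flow $\Phi_u$ then preserves $s$-fibers; then $\sigma_u:=\Phi_u\circ\epsilon$ automatically satisfies $s\circ\sigma_u=\id_M$, and $t\circ\sigma_u$ is a diffeomorphism for $u$ small by openness of $\Diff(M)$ (exactly the openness invoked in Proposition~\ref{prop:Frechet}). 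Shrinking $I$ if necessary, each $\sigma_u$ is a smooth bisection, $\Sigma_0=\epsilon(M)$, and $\frac{\partial}{\partial u}\big|_{0}\sigma_u(m)=b_m$.

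For the converse direction, I would take a smooth family $(\Sigma_u)$ with $\Sigma_0=\epsilon(M)$, use Corollary~\ref{families_of_bisections} to know that $(u,m)\mapsto\sigma_u(m)$ is smooth, and then differentiate in $u$ at $u=0$. Smoothness of the evaluation map immediately gives that $m\mapsto\frac{\partial}{\partial u}\big|_{0}\sigma_u(m)$ is a smooth map $M\to T\Gamma$ landing in $\epsilon^*T\Gamma$ (because $\sigma_0=\epsilon$). It remains to check that this section lands in $A=\ker(Ts)$: since each $\sigma_u$ is a section of $s$, we have $s\circ\sigma_u=\id_M$ for all $u$, so differentiating the constant-in-$u$ relation $s(\sigma_u(m))=m$ yields $T_{1_m}s\big(\frac{\partial}{\partial u}\big|_0\sigma_u(m)\big)=0$, which is precisely the condition $b_m\in\ker(T_{1_m}s)$. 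This direction is essentially automatic once the exponential law is in hand.

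The main obstacle is the existence direction, specifically guaranteeing that the constructed curves $\sigma_u$ are honest bisections (i.e. that $t\circ\sigma_u$ stays a diffeomorphism) \emph{and} that they assemble into a \emph{smooth family} in the Fr\'echet sense. The diffeomorphism condition is handled by openness of $\Diff(M)\subset C^\infty(M,M)$ together with compactness of $M$ (so that "$u$ small enough" can be chosen uniformly), shrinking $I$ as needed. The smoothness of the family is where I would lean hardest on Corollary~\ref{families_of_bisections}: by construction $F(u,m)=\Phi_u(\epsilon(m))$ is a smooth map $I\times M\to\Gamma$ since it is the flow of a smooth vector field composed with the smooth map $\epsilon$, so the corollary converts this finite-dimensional smoothness into smoothness of $u\mapsto\Sigma_u$ in $\Bis(\Gamma)$. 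Thus the genuinely infinite-dimensional content of the lemma is entirely absorbed by the corollary, and what remains is the routine construction of an $s$-fiberwise extension of the given section $b$ and its flow.
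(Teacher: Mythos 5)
Your proposal is correct and follows essentially the same route as the paper's proof: extend the section $b$ to a vector field on $\Gamma$ tangent to the fibers of $s$, take $\sigma_u$ to be its flow starting on $\epsilon(M)$, use Corollary \ref{families_of_bisections} to convert smoothness of $(u,m)\mapsto\sigma_u(m)$ into smoothness of the family in $\Bis(\Gamma)$, and obtain the converse by differentiating the identity $s\circ\sigma_u=\id_M$. The only divergence is bookkeeping on the interval: the paper multiplies the vertical extension by a cutoff function equal to $1$ near $\epsilon(M)$ so that the flow is defined for all $u\in I$, whereas you shrink the interval using openness of ${\rm Diff}(M)\subset{\mathcal C}^{\infty}(M,M)$ and compactness of $M$ --- which is fine, provided one then reparametrizes by a diffeomorphism of intervals with derivative $1$ at $0$ (not a linear rescaling, which would rescale $b$) to recover the fixed interval $I=]-1,+1[$.
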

\begin{proof}
Let $\tilde{b}$ be a vector field, supported in a neighborhood of $\epsilon(M) \subset \Gamma$, 
tangent to the fibers of $s$,
and extending $b$, i.e. such that the restriction of $\tilde{b}$ to $\epsilon(M)$ coincides with $ b$.
Let $\sigma_u(m) $ be the flow at the time $u$ starting at $1_m$ of $\tilde{b}$.
It is by construction true that 
$ \left.\frac{\partial}{\partial u} \right|_{u =0} \sigma_u(m) \subset T_{1_m} \Gamma $
coincides with $ b_m$.
Moreover, upon replacing $\tilde{b}$ by $\chi\tilde{b}$ with $\chi$ a function supported in 
a neighborhood of $\epsilon(M) \subset \Gamma$
and equal to $1$ on that manifold, we can assume that $ \sigma_u(m)$ is defined for all 
$u \in I$ and that, for all fixed $u\in I$,
$m \to \sigma_u(m) $  is a bisection $\Sigma_u$ of $\Gamma$. According to Corollary 
\ref{families_of_bisections}, it is a smooth family of bisections, which completes 
the first part of the proof.

The second part follows from Corollary \ref{families_of_bisections}, which grants that  
$$m \mapsto \left.\frac{\partial}{\partial u} \right|_{u =0} \sigma_u(m)  $$ 
is a smooth section of $\epsilon^* T\Gamma$ and the relation $ s \circ \sigma_u = id_M$, which grants
that $\left.\frac{\partial}{\partial u} \right|_{u =0} \sigma_u(m)$ has values in the kernel of 
$T_{1_m}s$, i.e. $A_m$. 
\end{proof}

Lemma \ref{lem:existsbisections} means that, when $\Bis(\Gamma)$ is equipped with the 
Fr\'echet manifold topology as in Proposition \ref{prop:Frechet}:

\begin{prop}\label{prop:existsbisections}
The tangent space of $\Bis (\Gamma)$ at the bisection $\epsilon(M)$ is $\Gamma(A)$.
\end{prop}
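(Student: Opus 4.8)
The plan is to identify the tangent space of $\Bis(\Gamma)$ at the distinguished bisection $\epsilon(M)$ with the space of global sections $\Gamma(A)$, using Lemma \ref{lem:existsbisections} as the essential input. The key observation is that tangent vectors to a Fréchet manifold at a point are computed via equivalence classes of smooth curves through that point, and that Corollary \ref{families_of_bisections} tells us precisely what a smooth curve in $\Bis(\Gamma)$ through $\epsilon(M)$ looks like.

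First I would recall that a tangent vector to $\Bis(\Gamma)$ at $\epsilon(M)$ is represented by a smooth curve $u \mapsto \Sigma_u$ with $\Sigma_0 = \epsilon(M)$, and two such curves represent the same tangent vector if and only if their velocity $\left.\frac{\partial}{\partial u}\right|_{u=0}\sigma_u$ agrees. By Corollary \ref{families_of_bisections}, smoothness of the family is equivalent to smoothness of $(u,m)\mapsto\sigma_u(m)$ as a map $I\times M\to\Gamma$, so differentiating in $u$ at $u=0$ produces a well-defined element $m\mapsto \left.\frac{\partial}{\partial u}\right|_{u=0}\sigma_u(m)$ of the pullback bundle $\epsilon^*T\Gamma$.

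Next I would invoke Lemma \ref{lem:existsbisections} directly to establish the identification as a bijection of sets. The second part of the lemma shows that every such velocity lands in $A = \ker(Ts)$ and defines a smooth section of $A$; this gives a well-defined linear map $T_{\epsilon(M)}\Bis(\Gamma)\to\Gamma(A)$. The first part of the lemma supplies, for any prescribed section $b\in\Gamma(A)$, a smooth family $(\Sigma_u)$ with $\Sigma_0=\epsilon(M)$ realizing $b$ as its velocity, which shows surjectivity. Injectivity is immediate since a tangent vector is by definition determined by the velocity of a representing curve. Finally I would note that the map is linear (scaling and adding velocities corresponds to reparametrizing and combining families), so it is a linear isomorphism $T_{\epsilon(M)}\Bis(\Gamma)\cong\Gamma(A)$.

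I do not expect a genuine obstacle here, since the proposition is essentially a restatement of Lemma \ref{lem:existsbisections} in the language of tangent spaces; the only point requiring mild care is to make precise that the Fréchet-manifold tangent space at $\epsilon(M)$ is indeed computed by velocities of smooth curves and that the correspondence with sections of $\epsilon^*T\Gamma$ is the natural one. This is exactly what the exponential law underlying Corollary \ref{families_of_bisections} guarantees. The substantive analytic content — existence of the realizing family via a flow argument and the constraint $s\circ\sigma_u=\id_M$ forcing velocities into $\ker(T_{1_m}s)$ — has already been discharged in the lemma, so the proof reduces to assembling these facts.
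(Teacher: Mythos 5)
Your proposal is correct and follows essentially the same route as the paper: the paper gives no separate proof, presenting the proposition as a direct reformulation of Lemma \ref{lem:existsbisections} (``Lemma \ref{lem:existsbisections} means that\dots''), with smooth curves through $\epsilon(M)$ identified with their velocities via Corollary \ref{families_of_bisections}. Your write-up simply makes explicit the bookkeeping the paper leaves implicit --- well-definedness and injectivity from the kinematic description of tangent vectors, surjectivity from the flow construction in the first half of the lemma, and linearity --- which is exactly the intended argument.
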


We can now define the adjoint action of a bisection $\Sigma \in {\Bis}(\Gamma)$ on $A$.
Let $\underline{\sigma}$ be the diffeomorphism of $M$ associated to $\Sigma$ and let
$$\begin{array}{rrcl} \Sigma^\rhd : &\Gamma &\to & \Gamma \\ 
&  \gamma & \to & \Sigma \rhd \gamma .\end{array} $$
By the definition \ref{definition_rackoid} of a Lie rackoid, the following diagrams commute:
$$ \xymatrix{ M \ar[r]^{ \underline{\sigma}} \ar[d]^\epsilon & M \ar[d]^\epsilon \\ 
\ar[r]^{\Sigma^\rhd} \Gamma & \Gamma }  \hbox{ and } 
\xymatrix{ \Gamma \ar[r]^{\Sigma^\rhd} \ar[d]^{s} & \Gamma  \ar[d]^{s} \\
M  \ar[r]^{  \underline{\sigma}} & M }  .$$
Differentiating at the point $\gamma= 1_m$ (that in the next lines we write 
simply as $m$, making no further reference to the injection $\epsilon$), 
the first of these diagrams, gives that $T_m \Sigma^\rhd$ maps $T_m M$ to $ T_{\underline{\sigma} (m) }M$.
Differentiating at the point $m \in M$ the second of these diagrams yields:
 $$ \xymatrix{ T_{m} \Gamma \ar[r]^{ T_m \Sigma^\rhd }  \ar[d]^{T_m s} & T_{  \underline{\sigma}(m)} \Gamma  
  \ar[d]^{T_{  \underline{\sigma}(m)} s}\\T_m M  \ar[r]^{  T_m \underline{\sigma}} & T_{  \underline{\sigma}(m)} M } $$
which implies that $  T_m \Sigma^\rhd  $ maps the kernel of  $T_m s$ to the kernel of 
$ T_{  \underline{\sigma}(m)} s $,
i.e. maps $A_m$ to $A_{ \underline{\sigma}(m)} $. We call \emph{adjoint map} and 
denote by $\Ad_\Sigma$ the restricted vector bundle morphism.
In equation:
\begin{equation}  \label{def_Ad}
\Ad_\Sigma :=  \left. T_m \Sigma^\rhd \right|_{A}.
\end{equation}
 The inverse $ (\Sigma^\rhd)^{-1} $ of $\Sigma^\rhd$ exists by assumption and its differential 
at $\epsilon (M)$ is the inverse of $\Ad_\Sigma$. Hence,  $\Ad_\Sigma$ is an invertible vector 
bundle morphism over the diffeomorphism $\underline{\sigma}$. In particular, $\Ad_\Sigma$ can 
be seen as a linear operator on the space $\Gamma(A)$ of sections of $A$ that we still denote 
by $\Ad_\Sigma$ with a slight abuse of notation: For every section $a \in \Gamma(A)$, we set 
$ \Ad_\Sigma a$ to be the section of $A$ whose value at $m \in M$ is 
$ \Ad_\Sigma a_{\underline{\sigma}^{-1}(m)} $. In equation:
 \begin{equation}\label{eq:AdjointForSections}  (\Ad_{\Sigma }  \, a )(m)=   
\Ad_\Sigma a_{\underline{\sigma}^{-1}(m)}  \hbox{ for all $m \in M$}.
\end{equation} 
Without going any further, notice that, for every smooth function $f $ on $M$ and every 
section $a $ of $A$:
 \begin{equation}\label{eq:donnera_ancre}  \Ad_{\Sigma }  \, 
fa = (\underline{\sigma}^{-1})^* f \, ( \Ad_{\Sigma} \, a ).
\end{equation}

The adjoint action can also be interpreted as follows.
\begin{lem} \label{lem:adjointAsDifferential}
For every bisection $\Sigma \in \Bis(\Gamma)$, the assignment $\Tau \mapsto \Sigma \rhd \Tau$
is a smooth diffeomorphism of $\Bis(\Gamma)$, mapping $\epsilon(M)$ to itself, and whose 
differential at the point 
$ \epsilon(M)$ is, upon  identifying $T_{\epsilon}(M) \Bis(\Gamma)$ with $\Gamma(A)$ as in 
Proposition \ref{prop:existsbisections},
is the adjoint action $\Ad_\Sigma : \Gamma(A) \to \Gamma(A)$ given by (\ref{def_Ad}).
\end{lem}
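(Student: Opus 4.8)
The plan is to verify the three assertions in turn: that $\Tau \mapsto \Sigma \rhd \Tau$ is a diffeomorphism of $\Bis(\Gamma)$, that it fixes $\epsilon(M)$, and that its differential at $\epsilon(M)$ equals $\Ad_\Sigma$. The guiding principle throughout is to translate every statement about bisections into a statement about the maps $M \to \Gamma$ that represent them, and then to invoke Corollary \ref{families_of_bisections} to pass between smoothness of a family of bisections and smoothness of the associated map $(u,m) \mapsto \tau_u(m)$.

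First I would record the explicit formula for the section representing $\Sigma \rhd \Tau$. Writing $\Sigma^\rhd$ for the diffeomorphism $\gamma \mapsto \Sigma \rhd \gamma$ of $\Gamma$, the set $\Sigma \rhd \Tau$ is the image $\Sigma^\rhd(\Tau)$, whose underlying diffeomorphism of $M$ is $\underline{\sigma} \circ \underline{\tau} \circ \underline{\sigma}^{-1}$ (the footnote to Definition \ref{definition_rackoid}). Tracking source points through $\Sigma^\rhd$ shows that the right inverse of $s$ with image $\Sigma \rhd \Tau$ is
\begin{equation}\label{eq:secconj}
(\sigma \rhd \tau)(m) = \Sigma^\rhd\bigl(\tau(\underline{\sigma}^{-1}(m))\bigr), \qquad m \in M.
\end{equation}
Since $\Sigma^\rhd$ is smooth and $\underline{\sigma}^{-1}$ is a fixed diffeomorphism independent of the deformation parameter, (\ref{eq:secconj}) shows that if $(u,m) \mapsto \tau_u(m)$ is smooth then so is $(u,m) \mapsto (\sigma \rhd \tau_u)(m)$; by Corollary \ref{families_of_bisections} the assignment $\Tau \mapsto \Sigma \rhd \Tau$ is smooth. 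Its inverse is $\Theta \mapsto (\Sigma^\rhd)^{-1}(\Theta)$: from the relation $s \circ \Sigma^\rhd = \underline{\sigma} \circ s$ one checks that $(\Sigma^\rhd)^{-1}$ again carries bisections to bisections, and its smoothness follows by the same argument with $(\Sigma^\rhd)^{-1}$ in place of $\Sigma^\rhd$. Hence $\Tau \mapsto \Sigma \rhd \Tau$ is a diffeomorphism of $\Bis(\Gamma)$. That it fixes $\epsilon(M)$ is immediate from the pointed axiom $\Sigma \rhd 1_m = 1_{\underline{\sigma}(m)}$, which gives $\Sigma \rhd \epsilon(M) = \epsilon(M)$ since $\underline{\sigma}$ is a bijection of $M$.

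For the differential, I would represent a tangent vector at $\epsilon(M)$ by a smooth family $(\Tau_u)_{u \in I}$ with $\Tau_0 = \epsilon(M)$ and $\left.\frac{\partial}{\partial u}\right|_{u=0}\tau_u(m) = b_m$, so that under the identification of Proposition \ref{prop:existsbisections} it corresponds to the section $b \in \Gamma(A)$ furnished by Lemma \ref{lem:existsbisections}. Differentiating (\ref{eq:secconj}), setting $n := \underline{\sigma}^{-1}(m)$ and using $\tau_0(n) = 1_n$, the chain rule yields
\begin{equation}\label{eq:dconj}
\left.\frac{\partial}{\partial u}\right|_{u=0}(\sigma \rhd \tau_u)(m) = T_{1_n}\Sigma^\rhd\Bigl(\left.\frac{\partial}{\partial u}\right|_{u=0}\tau_u(n)\Bigr) = \Ad_\Sigma\, b_{\underline{\sigma}^{-1}(m)},
\end{equation}
the last equality being the definition (\ref{def_Ad}) of $\Ad_\Sigma$ on the fibre $A_n$. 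Comparing (\ref{eq:dconj}) with the definition (\ref{eq:AdjointForSections}) of $\Ad_\Sigma$ as an operator on sections shows that the differential sends $b$ to $\Ad_\Sigma b$, which is the assertion.

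The step I expect to be delicate is getting (\ref{eq:secconj}) exactly right, in particular the reparametrization by $\underline{\sigma}^{-1}$. It is precisely this reparametrization --- forced by the fact that $\Sigma \rhd \Tau$ lies over the conjugated diffeomorphism $\underline{\sigma} \circ \underline{\tau} \circ \underline{\sigma}^{-1}$ rather than over $\underline{\tau}$ --- that produces the shift $b_{\underline{\sigma}^{-1}(m)}$ and thereby reproduces the definition (\ref{eq:AdjointForSections}) of the adjoint action on sections. Everything else is a routine application of the chain rule together with Corollary \ref{families_of_bisections}.
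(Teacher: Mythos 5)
Your proposal is correct, but there is nothing in the paper to compare it against: the authors state Lemma \ref{lem:adjointAsDifferential} without proof, as an ``interpretation'' of the adjoint action sandwiched between the definition (\ref{eq:AdjointForSections}) and Lemma \ref{lem:smooth}. What you have written is the natural completion using exactly the ingredients the paper sets up. Your key formula $(\sigma\rhd\tau)(m)=\Sigma^\rhd\bigl(\tau(\underline{\sigma}^{-1}(m))\bigr)$ is right: it follows from the axiom $s(\Sigma\rhd\gamma)=\underline{\sigma}(s(\gamma))$, which forces the unique point of $\Sigma\rhd\Tau$ over $m$ to be the image of the point of $\Tau$ over $\underline{\sigma}^{-1}(m)$. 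The smoothness argument via Corollary \ref{families_of_bisections} (testing on smooth families) is precisely the device the paper itself uses whenever it needs smoothness of operations on $\Bis(\Gamma)$, e.g.\ for the conjugation rackoid of a Lie groupoid, so your proof is at the same level of rigor as the surrounding text; a pedant could ask for the convenient-calculus justification that family-wise smoothness suffices for Fr\'echet smoothness, but that objection applies to the whole paper, not to you. The chain-rule computation is also correct: since $\underline{\sigma}^{-1}(m)$ does not depend on the parameter $u$, differentiating your formula along a family $(\Tau_u)$ with $\Tau_0=\epsilon(M)$ gives $T_{1_n}\Sigma^\rhd(b_n)$ with $n=\underline{\sigma}^{-1}(m)$, and since $b_n\in A_n$ this is $\Ad_\Sigma b_{\underline{\sigma}^{-1}(m)}$ by (\ref{def_Ad}), matching (\ref{eq:AdjointForSections}). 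Your closing observation is the essential point: the reparametrization by $\underline{\sigma}^{-1}$, forced by $\underline{\sigma\rhd\tau}=\underline{\sigma}\circ\underline{\tau}\circ\underline{\sigma}^{-1}$, is exactly what turns the fibrewise map $T\Sigma^\rhd|_A$ into the operator on $\Gamma(A)$ that the bracket (\ref{eq:bracket}) later differentiates.
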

This lemma implies immediately the following result:
\begin{lem}\label{lem:smooth}
 The adjoint action  $Bis(\Gamma) \times \Gamma(A) \to \Gamma(A)$ is smooth, 
with $Bis(\Gamma),\Gamma(A)$ being equipped  with the Fr\'echet topology.
 \end{lem}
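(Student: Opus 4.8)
The plan is to deduce smoothness of the adjoint action from Lemma \ref{lem:adjointAsDifferential}, which reinterprets $\Ad_\Sigma$ as the differential at $\epsilon(M)$ of the smooth self-map $c_\Sigma\colon \Tau\mapsto\Sigma\rhd\Tau$ of $\Bis(\Gamma)$. The key observation is that the self-distributivity relation (\ref{*}) says exactly that, for each fixed $\Sigma$, the assignment $\Tau\mapsto\Sigma\rhd\Tau$ is compatible with the rackoid product, and in particular that the family $\{c_\Sigma\}_{\Sigma\in\Bis(\Gamma)}$ depends smoothly on $\Sigma$ as well as on $\Tau$. Concretely, I would consider the two-variable map
$$ C\colon \Bis(\Gamma)\times\Bis(\Gamma)\to\Bis(\Gamma),\qquad (\Sigma,\Tau)\mapsto \Sigma\rhd\Tau, $$
and first argue that $C$ is smooth jointly in both arguments. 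This follows from the smoothness of the underlying composition law $\rhd\colon\Bis(\Gamma)\times\Gamma\to\Gamma$ assumed in the definition of a Lie rackoid, combined with the exponential-law characterization of smooth families in Corollary \ref{families_of_bisections}: since $\Sigma\rhd\Tau$ is obtained fibrewise by applying $\Sigma\rhd-$ to the bisection $\Tau$, and this map sends bisections to bisections, the induced map on the Fr\'echet manifolds of bisections is smooth in both slots.

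Next I would pass to differentials. The adjoint action $\Bis(\Gamma)\times\Gamma(A)\to\Gamma(A)$ in question is, by Lemma \ref{lem:adjointAsDifferential} and Proposition \ref{prop:existsbisections}, nothing but the partial derivative of $C$ in its second argument at the base bisection $\epsilon(M)$, evaluated on a tangent vector $a\in T_{\epsilon(M)}\Bis(\Gamma)\cong\Gamma(A)$. In formula, $\Ad_\Sigma a = \left.\frac{\partial}{\partial u}\right|_{u=0} C(\Sigma,\Tau_u)$ where $(\Tau_u)_{u\in I}$ is any smooth family in $\Bis(\Gamma)$ with $\Tau_0=\epsilon(M)$ and $\left.\frac{\partial}{\partial u}\right|_{u=0}\Tau_u=a$, such a family existing by Lemma \ref{lem:existsbisections}. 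The map $(\Sigma,a)\mapsto\Ad_\Sigma a$ is therefore obtained from the smooth map $C$ by the operation of fibre-differentiation in the second variable along $\epsilon(M)$. Smoothness of $C$ together with the smooth dependence of the chosen family $\Tau_u$ on its initial data (again guaranteed by Lemma \ref{lem:existsbisections} and Corollary \ref{families_of_bisections}) then yields that $(\Sigma,a)\mapsto\Ad_\Sigma a$ is smooth.

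The step I expect to require the most care is the joint smoothness of the tangent map, i.e. checking that differentiating $C(\Sigma,-)$ at $\epsilon(M)$ is compatible with varying $\Sigma$ smoothly, in the infinite-dimensional Fr\'echet setting. One cannot simply invoke a finite-dimensional chain rule; instead I would reduce to the exponential law of Corollary \ref{families_of_bisections} by testing against arbitrary smooth finite-dimensional families. Explicitly, given a smooth family $(\Sigma_v)_{v\in\cU}$ of bisections and the family $(\Tau_u)_{u\in I}$ as above, the composite $(v,u,m)\mapsto\sigma_v\rhd\tau_u(m)$ is a smooth map from $\cU\times I\times M$ to $\Gamma$; differentiating in $u$ at $u=0$ and invoking Corollary \ref{families_of_bisections} shows that $(v,m)\mapsto(\Ad_{\Sigma_v}a)(m)$ is smooth, which is precisely joint smoothness of the adjoint action. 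This verification of smoothness through the exponential law, rather than through any abstract differentiation theorem, is the technical heart of the argument, and it is what makes essential use of the Fr\'echet manifold structure established in Proposition \ref{prop:Frechet}.
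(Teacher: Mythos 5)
Your proposal is correct and follows essentially the same route as the paper: the paper gives no separate proof of Lemma \ref{lem:smooth}, asserting only that it follows ``immediately'' from Lemma \ref{lem:adjointAsDifferential}, and your argument is precisely a detailed reconstruction of that implication --- realizing $\Ad_\Sigma$ as the partial differential at $\epsilon(M)$ of the smooth two-variable product $(\Sigma,\Tau)\mapsto\Sigma\rhd\Tau$ on $\Bis(\Gamma)$, with smoothness checked through the exponential law of Corollary \ref{families_of_bisections}. The only point to watch is that your final explicit curve-test holds the section $a$ fixed while varying $\Sigma$, whereas joint smoothness in the Fr\'echet setting requires testing smooth families in both slots simultaneously; this is repaired inside your own framework by letting $a_v$ vary smoothly and choosing the representing families $\Tau_{v,u}$ (flows of smoothly $v$-dependent extensions, as in the proof of Lemma \ref{lem:existsbisections}) to depend smoothly on $v$, so that Corollary \ref{families_of_bisections} applies to $(v,u,m)\mapsto \sigma_v\rhd\tau_{v,u}(m)$.
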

Notice also that for every pair of bisections $ \Sigma,\Tau$, relation 
(\ref{eq:self_distributivity}) reads
 $$ \Sigma^\rhd \circ \Tau^\rhd = (\Sigma \rhd \Tau)^\rhd \circ \Sigma^\rhd .$$
This implies, when differentiated at a point $m \in M$ in the direction of 
$a \in A_m \subset T_m \Gamma $ that
 \begin{equation}\label{eq:donnera_Leibniz}  \Ad_{\Sigma } \circ  \Ad_{\Tau} \, 
a = \Ad_{ \Sigma \rhd \Tau} \circ \Ad_{\Sigma} \, a .
\end{equation}
Lemma \ref{lem:smooth} allows to consider the differential of the assignment 
$(\Sigma,a) \mapsto \Ad_\Sigma a$ from $\Bis(\Gamma) \times \Gamma(A) \to \Gamma(A)$ 
at the bisection $\Sigma =\epsilon(M)$. Since by Proposition \ref{prop:existsbisections}, 
the tangent space of $\Bis(\Gamma)$ at $ \epsilon(M)$ is precisely $\Gamma(A) $, this 
differential is a continuous assignment $ \Gamma(A) \times \Gamma(A) \to \Gamma(A)$
(with $\Gamma(A)$ equipped with the Fr\'echet topology) which is linear in both variables 
by construction. We call it the \emph{Leibniz algebroid bracket} and denote it by 
$ (b,a) \mapsto [b,a]. $
  By construction:
 \begin{equation}\label{eq:bracket} [b,a] =  \left. \frac{\partial }{\partial u } 
\right|_{u =0}   \Ad_{\Sigma_u} a,  \end{equation}
with $a,b \in \Gamma(A)$ and $ \Sigma_\epsilon$ an arbitrary smooth family of bisections 
as in Lemma \ref{lem:existsbisections}, i.e. 
$ \Sigma_0=\epsilon(M)$ and $\left.\frac{\partial}{\partial u} \right|_{u =0} \sigma_u(m) = b_m$.

We are now able to make the statement of Theorem \ref{thm:infinitesimal} more precise.

\begin{theo}\label{thm:infinitesimal2}
For every Lie rackoid $\Gamma$, the vector bundle defined in (\ref{eq:fibre}) equipped with
the anchor map defined in (\ref{eq:ancre}) 
and the bracket defined in (\ref{eq:bracket}) is a Leibniz algebroid, called the tangent 
Leibniz algebroid of $\Gamma$. 
\end{theo}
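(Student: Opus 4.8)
The plan is to verify the three defining properties of a Leibniz algebroid for the triple $(A,\rho,[\,,\,])$: the derivation rule (\ref{eq:anchor}), the Leibniz--Jacobi identity for the bracket, and the fact that $\rho$ is a Leibniz morphism onto vector fields. First I would record that the bracket (\ref{eq:bracket}) is well defined and lands in $\Gamma(A)$: it is the directional derivative at $\epsilon(M)$ of the map $(\Sigma,a)\mapsto\Ad_\Sigma a$, which is smooth by Lemma \ref{lem:smooth}, so it depends only on $b=\left.\frac{\partial}{\partial u}\right|_{u=0}\Sigma_u\in T_{\epsilon(M)}\Bis(\Gamma)=\Gamma(A)$ (Proposition \ref{prop:existsbisections}) and not on the representing family; and since each $\Ad_{\Sigma_u}a$ is a section of the subbundle $A$, so is the derivative. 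The three relations already in hand, namely (\ref{eq:donnera_ancre}), (\ref{eq:donnera_Leibniz}) and the compatibility $t\circ\Sigma^\rhd=\underline{\sigma}\circ t$, will supply the three axioms after a single differentiation each.

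For the derivation rule I would differentiate (\ref{eq:donnera_ancre}) at $u=0$ along a family $\Sigma_u$ representing $b$. Writing $Y_b=\left.\frac{\partial}{\partial u}\right|_{u=0}\underline{\sigma}_u=Tt(b)=-\rho(b)$ for the vector field generating the induced isotopy $\underline{\sigma}_u$ of $M$, the factor $(\underline{\sigma}_u^{-1})^*f$ contributes $\left.\frac{\partial}{\partial u}\right|_{u=0}(\underline{\sigma}_u^{-1})^*f=\rho(b)(f)$, while $\Ad_{\Sigma_0}=\id$; the product rule then yields $[b,fa]=f[b,a]+\rho(b)(f)\,a$, which is exactly (\ref{eq:anchor}).

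For the Leibniz--Jacobi identity I would use two successive differentiations. Reading (\ref{eq:donnera_Leibniz}) as the operator identity $\Ad_\Sigma\circ\Ad_\Tau=\Ad_{\Sigma\rhd\Tau}\circ\Ad_\Sigma$ on $\Gamma(A)$, I fix $\Sigma$, let $\Tau_v$ represent $b$, apply both sides to a fixed $a$ and differentiate in $v$ at $v=0$. On the left the continuous linear operator $\Ad_\Sigma$ passes through the derivative, giving $\Ad_\Sigma[b,a]$; on the right I invoke Lemma \ref{lem:adjointAsDifferential} to identify $\left.\frac{\partial}{\partial v}\right|_{v=0}(\Sigma\rhd\Tau_v)=\Ad_\Sigma(b)$, so that the right-hand side becomes $[\Ad_\Sigma b,\Ad_\Sigma a]$. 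This produces the equivariance relation $\Ad_\Sigma[b,a]=[\Ad_\Sigma b,\Ad_\Sigma a]$. Differentiating it once more along a family $\Sigma_u$ representing $c$, using the bilinearity of the bracket and $\Ad_{\Sigma_0}=\id$, the left side turns into $[c,[b,a]]$ and the product rule turns the right side into $[[c,b],a]+[b,[c,a]]$, which is the desired identity.

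For the anchor I would observe that $\Sigma\mapsto\underline{\sigma}$ is a smooth rack morphism from $\Bis(\Gamma)$ to ${\rm Diff}(M)$ equipped with conjugation, which is precisely the footnote relation $\underline{\sigma\rhd\tau}=\underline{\sigma}\circ\underline{\tau}\circ\underline{\sigma}^{-1}$; its differential at $\epsilon(M)$ should then be a Leibniz morphism to the vector fields. Concretely, applying $Tt$ to (\ref{eq:bracket}) and using $Tt\circ T\Sigma_u^\rhd=T\underline{\sigma}_u\circ Tt$ together with $Tt|_A=-\rho$ identifies $\rho([b,a])$ with $\left.\frac{\partial}{\partial u}\right|_{u=0}(\underline{\sigma}_u)_*\rho(a)$, the derivative of a pushforward of vector fields, i.e. a Lie bracket; since the generator of $\underline{\sigma}_u$ is $-\rho(b)$, the two minus signs combine to give $\rho([b,a])=[\rho(b),\rho(a)]$, and this is exactly why the anchor (\ref{eq:ancre}) carries the sign $-Tt$. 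The main obstacle is not algebraic but analytic: every step above differentiates maps of Fr\'echet manifolds, so I must ensure that each $\Ad_\Sigma$ acts as a continuous linear operator on $\Gamma(A)$, that the bracket is a bounded bilinear map to which the Leibniz product rule applies, and that differentiation commutes with the continuous operations used. These facts are underwritten by the smoothness of the adjoint action (Lemma \ref{lem:smooth}) and by the Fr\'echet/convenient calculus set up in Proposition \ref{prop:Frechet} and Corollary \ref{families_of_bisections}.
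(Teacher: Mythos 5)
Your proposal is correct and follows essentially the same route as the paper's own proof: the derivation rule (\ref{eq:anchor}) is obtained by differentiating (\ref{eq:donnera_ancre}) along a family representing $b$ and identifying the generator of $\underline{\sigma_u}$ as $Tt(b)=-\rho(b)$, and the Leibniz--Jacobi identity is obtained by differentiating (\ref{eq:donnera_Leibniz}) first in $\Tau$ (via Lemma \ref{lem:adjointAsDifferential}), giving the equivariance $\Ad_\Sigma[b,a]=[\Ad_\Sigma b,\Ad_\Sigma a]$, and then in $\Sigma$, using the continuity supplied by Lemma \ref{lem:smooth} -- exactly the paper's two steps. The one place where you go beyond the paper is the verification that the anchor is bracket-preserving, $\rho([b,a])=[\rho(b),\rho(a)]$: the paper's definition of a Leibniz algebroid requires $\rho$ to be a Leibniz morphism, yet its proof ends after the Jacobi identity without checking this. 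Your argument for it -- applying $Tt$ to (\ref{eq:bracket}), using $Tt\circ T\Sigma_u^\rhd=T\underline{\sigma_u}\circ Tt$ and $\left.Tt\right|_A=-\rho$ to get $\rho(\Ad_{\Sigma_u}a)=(\underline{\sigma_u})_*\rho(a)$, and then recognizing the $u$-derivative of this pushforward along the flow generated by $-\rho(b)$ as a Lie bracket, with the two minus signs cancelling -- is correct, and it both explains the sign convention in (\ref{eq:ancre}) and fills a small gap the paper leaves implicit.
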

\begin{proof}
We show the compatibility of the bracket with the multiplication by smooth functions 
$f\in{\mathcal C}^{\infty}(M)$.
Relation (\ref{eq:bracket}), applied to a smooth family $(\Sigma_u)_{u \in I}$ of bisections 
corresponding to an arbitrary section $b \in \Gamma(A)$ as in Lemma \ref{lem:existsbisections}, 
implies that for every function~$f$:
 \begin{eqnarray*} [b,fa]  &=& \left. \frac{\partial }{\partial u } \right|_{u =0}   \Ad_{\Sigma_u} (fa) \\
 &=&  \left. \frac{\partial }{\partial u } \right|_{u =0} \left( (\underline{\sigma_u}^{-1})^* f 
\Ad_{\Sigma_u} (a)\right) \\
&=&f \left. \frac{\partial }{\partial u } \right|_{u =0}  \Ad_{\Sigma_u} (a) + \left. 
\frac{\partial }{\partial u } \right|_{u =0} \big( (\underline{\sigma_u}^{-1})^* f \big)\, a, \\
& =&f [b,a]+  \left. \frac{\partial }{\partial u } \right|_{u =0} \left( (\underline{\sigma_u}^{-1})^* 
f \right)\, a. 
 \end{eqnarray*}
where we used (\ref{eq:donnera_ancre}) to go from the first to the second line. 
The proof now follows from the fact that the vector field obtained by differentiating at 
$u=0$ the $1$-parameter family of diffeomorphisms
$ \underline{\sigma_u} := t \circ \sigma_u $ has a value at $m \in M$ given by  
$$Tt  \left( \left. \frac{\partial }{\partial u } \right|_{u =0}(\sigma_u  (m))\right) = 
Tt ( b_m ) =- \rho(b_m)$$
where $\sigma_u$ is, for all $u \in I$, the section of $s$ corresponding to $\Sigma_u$ (so  
that $ \underline{\sigma_u} = t \circ \sigma_u $) and where
Lemma \ref{lem:existsbisections} was used the definition of the anchor given by equation 
(\ref{eq:ancre}) in the last line. This implies that the vector field obtained by differentiating at 
$u=0$ the $1$-parameter family of diffeomorphisms
$ \underline{\sigma_u}^{-1} $ is $\rho(b)$ and that
$$ \left. \frac{\partial }{\partial u } \right|_{u =0} \left( (\underline{\sigma_u}^{-1})^*f \right)= \rho(b)(f) ,$$
which completes the proof of (\ref{eq:anchor}).

Differentiating (\ref{eq:donnera_Leibniz}) with respect  to $\Tau$ at the bisection 
$\epsilon(M)$ yields, in view of Lemma \ref{lem:adjointAsDifferential}, that, for every 
section $b \in \Gamma(A)$:
 $$  \Ad_\Sigma([b,a]) = [\Ad_\Sigma b , \Ad_\Sigma a].$$
Lemma \ref{lem:smooth} implies the continuity of the Leibniz algebroid bracket, which allows 
to differentiate
the previous expression with respect to $\Sigma$ at the point $1_M$ in the direction of some 
$c \in \Gamma(A) \simeq T_{1_M}\Bis(\Gamma) $.
The relation obtained  by this procedure is precisely:
 $$  [c,[b,a]] = [[c, b] , a]+[b,[c,a]].$$
 This completes the proof.
\end{proof}

\section{More Examples}

\subsection{A Lie rackoid integrating a hemisemidirect product Leibniz algebra}
\label{hemisemidirectproduct_Lie_rackoid}

In this section, we present an example of Lie rackoids that comes neither from a Lie groupoid 
nor from a bundle of Lie racks. We will see that this Lie rackoid integrates the 
hemisemidirect product Leibniz algebra associated to the action of diffeomorphisms on 
$1$-forms. For the notion of a hemisemidirect product Leibniz algebra, see \cite{KinWei}. 

Let $M$ be an arbitrary manifold, we define a smooth precategory with base 
manifold $M$ as follows: 
$\Gamma := T^* M \times M$, while $ s,t : \Gamma \to M$ are respectively, the maps,
  $$ t(\alpha, n) = n \,\,\,\hbox{ and  }\,\,\, s(\alpha, n) = m $$
   for all $m \in M$, $\alpha \in T^*_m M $, $n \in M $, and $ 1_m = (0_m,m)$
   with $0_m$ the zero element in $T_m^*M$.
   
   We first characterize its bisections:
   
   \begin{lem}\label{lem:bisects_rackoids_not_groupoids}
   A bisection in ${\rm Bis}(\Gamma) $ of 
$\xymatrix{\Gamma\ar@<2pt>[r]^{s}\ar@<-2pt>[r]_{t} & M}$ is of the form 
$(\omega_m, \varphi(m)) $ with $ \omega \in  \Omega^1 (M)$ a $1$-form and 
$\varphi: M \to M $ a diffeomorphism.
   \end{lem}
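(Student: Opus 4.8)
The plan is to unwind the definitions of bisection and of the maps $s,t$ for the specific precategory $\Gamma = T^*M \times M$, and then show that the two conditions (i) and (ii) of Definition \ref{def:bisections} force exactly the stated form. Recall that here $s(\alpha,n) = m$ for $\alpha \in T_m^*M$ (so $s$ is the composition of the bundle projection $T^*M \to M$ with the first factor) and $t(\alpha,n) = n$ is projection onto the second factor. So I would characterize a section $\sigma$ of $s$ and then impose that $t \circ \sigma$ be a diffeomorphism.

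First I would take a bisection in the guise (ii), i.e. a smooth map $\sigma : M \to \Gamma = T^*M \times M$ that is a right inverse to $s$. Writing $\sigma(m) = (\beta_m, \psi(m))$ with $\beta_m \in T^*_{p(m)}M$ (where $p$ denotes the bundle projection recording the basepoint of the cotangent vector) and $\psi : M \to M$, the condition $s \circ \sigma = \id_M$ reads $p(m) = m$ for all $m$, which forces $\beta_m \in T_m^*M$; hence the first component is precisely a $1$-form $\omega \in \Omega^1(M)$, namely $\omega_m = \beta_m$, and the second component is an arbitrary smooth map $\psi = \varphi : M \to M$. At this stage $\sigma(m) = (\omega_m, \varphi(m))$ is automatically a section of $s$ for \emph{any} $1$-form $\omega$ and \emph{any} smooth map $\varphi$.

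Next I would impose the remaining bisection condition, namely that $\underline{\sigma} = t \circ \sigma$ be a bijection (a diffeomorphism, in the smooth setting). Since $t(\omega_m,\varphi(m)) = \varphi(m)$, we have $\underline{\sigma} = \varphi$, so the condition is exactly that $\varphi : M \to M$ be a diffeomorphism. Conversely, any pair $(\omega,\varphi)$ with $\omega \in \Omega^1(M)$ and $\varphi$ a diffeomorphism yields via $m \mapsto (\omega_m,\varphi(m))$ a smooth section of $s$ whose $t$-composite is the diffeomorphism $\varphi$, hence a genuine smooth bisection; this gives the reverse inclusion and completes the characterization.

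This statement is essentially a direct unravelling of the definitions, so I do not expect a genuine obstacle. The only point requiring a little care is the bookkeeping of basepoints: one must check that the section condition $s\circ\sigma=\id_M$ is what pins the cotangent vector $\omega_m$ to lie in the fiber $T_m^*M$ (rather than in some other fiber), so that the first component is a bona fide $1$-form and not merely a map $M \to T^*M$. Once that is observed, the equivalence of data (i) and (ii) from Definition \ref{def:bisections} lets me phrase the conclusion either as the section $\sigma(m)=(\omega_m,\varphi(m))$ or as the corresponding subset $\Sigma = \{(\omega_m,\varphi(m)) : m \in M\} \subset \Gamma$, and smoothness is immediate since $\omega$ and $\varphi$ are smooth.
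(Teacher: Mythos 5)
Your proof is correct: the paper itself states this lemma without any proof, treating it as a direct unwinding of the definitions, and your argument supplies exactly that unwinding. The two key points you make — that $s\circ\sigma=\id_M$ is what pins the cotangent component to the fiber $T_m^*M$ (so it is a genuine $1$-form), and that $t\circ\sigma=\varphi$ turns the bijectivity/diffeomorphism requirement into the condition that $\varphi$ be a diffeomorphism — together with the converse direction, are precisely what is needed, so nothing is missing.
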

   
   This lemma allows us  to see elements of ${\rm Bis}(\Gamma)$ as pairs $(\omega,\varphi) $ 
   with $\varphi \in Diff(M) $ and $\omega \in  \Omega^1 (M) $, an identification that 
we will use without 
   further mention.
   
    We now endow the precategory 
$\xymatrix{\Gamma\ar@<2pt>[r]^{s}\ar@<-2pt>[r]_{t} & M}$ with a 
rackoid structure as follows.
For an arbitrary element $ (\alpha,n) \in \Gamma$ with 
$ \alpha \in T^*_m M, n \in M$, and for an arbitrary
 $(\omega,\varphi) \in {\rm Bis}(M) $, we set:
    \begin{equation}\label{eq:examples_rackoids_not_groupoids} 
(\omega,\varphi)    \rhd (  \alpha , m ) := ((\varphi^{-1})^* \alpha , \varphi(n) )
    \end{equation}
with the understanding that $ \psi^* =  (T_{\psi^{-1}(m)}^*\psi)$. Observe that 
$$T_{(\varphi\circ\psi)^{-1}(m)}^*(\varphi\circ\psi)\,=\,T_{\psi^{-1}(\varphi^{-1}(m))}^*\psi\,\circ\,
T_{\varphi^{-1}(m)}^*\varphi,$$
which we write in a short hand notation as $(\varphi\circ\psi)^*=\psi^*\circ\varphi^*$.
   
   Our first claim is that the product above defines a Lie rackoid structure.
   
   \begin{prop}
   Let $M$ be a manifold, then the operation $\rhd$ defined in equation 
$(\ref{eq:examples_rackoids_not_groupoids})$ is a Lie rackoid structure on 
$\xymatrix{\Gamma\ar@<2pt>[r]^{s}\ar@<-2pt>[r]_{t} & M}$.
   \end{prop}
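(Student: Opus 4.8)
The plan is to verify directly that the operation $\rhd$ of (\ref{eq:examples_rackoids_not_groupoids}) satisfies every clause of the definition of a Lie rackoid, the whole proof being driven by the contravariant functoriality $(\varphi\circ\psi)^*=\psi^*\circ\varphi^*$ recorded just after (\ref{eq:examples_rackoids_not_groupoids}). First I would record the crucial structural observation that $\rhd$ depends on a bisection $(\omega,\varphi)$ only through its diffeomorphism part $\varphi$, i.e.\ only through $\underline{\sigma}=\varphi$. Granting this, the source and target compatibility is immediate: since $(\varphi^{-1})^*\alpha\in T^*_{\varphi(m)}M$ whenever $\alpha\in T^*_mM$, one reads off $s(\Sigma\rhd\gamma)=\varphi(s(\gamma))=\underline{\sigma}(s(\gamma))$ and $t(\Sigma\rhd\gamma)=\varphi(n)=\underline{\sigma}(t(\gamma))$. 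The unit axioms are equally short: $1_M=(0,\id)$ yields $1_M\rhd\gamma=\gamma$, while $(\varphi^{-1})^*0_m=0_{\varphi(m)}$ gives $\Sigma\rhd 1_m=1_{\varphi(m)}=1_{\underline{\sigma}(m)}$.

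Next I would show that $\Sigma\rhd-\colon\Gamma\to\Gamma$ is a bijection, exhibiting the inverse $(\beta,p)\mapsto(\varphi^*\beta,\varphi^{-1}(p))$. That this is indeed inverse to $(\alpha,n)\mapsto((\varphi^{-1})^*\alpha,\varphi(n))$ follows from functoriality specialized to $\varphi^*\circ(\varphi^{-1})^*=\id=(\varphi^{-1})^*\circ\varphi^*$. Both the map and its inverse are manifestly smooth, being assembled from $\varphi$, $\varphi^{-1}$ and the fiberwise-linear cotangent lift.

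The heart of the argument is self-distributivity (\ref{*}), and this is exactly where functoriality does the work. Writing $\Sigma=(\omega,\varphi)$, $\Tau=(\eta,\psi)$ and $\gamma=(\alpha,n)$ with $\alpha\in T^*_mM$, the left-hand side expands to $\Sigma\rhd(\Tau\rhd\gamma)=\big((\varphi^{-1})^*(\psi^{-1})^*\alpha,\ \varphi\psi(n)\big)$. For the right-hand side I would invoke the structural observation again: only the diffeomorphism part of $\Sigma\rhd\Tau$ enters the computation of $(\Sigma\rhd\Tau)\rhd(\Sigma\rhd\gamma)$, and by the footnote to Definition \ref{definition_rackoid} this part is $\chi=\varphi\circ\psi\circ\varphi^{-1}$; hence the right-hand side equals $\big((\chi^{-1})^*(\varphi^{-1})^*\alpha,\ \chi\varphi(n)\big)$. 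The target components coincide since $\chi\varphi=\varphi\psi$. For the covector components I would use $\chi^{-1}=\varphi\circ\psi^{-1}\circ\varphi^{-1}$, apply functoriality to obtain $(\chi^{-1})^*=(\varphi^{-1})^*\circ(\psi^{-1})^*\circ\varphi^*$, and then cancel $\varphi^*\circ(\varphi^{-1})^*=\id$, which leaves $(\varphi^{-1})^*(\psi^{-1})^*\alpha$, precisely the left-hand side.

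Finally I would address the smoothness required of a \emph{Lie} rackoid: that $\rhd\colon\Bis(\Gamma)\times\Gamma\to\Gamma$ is smooth for the Fr\'echet structure of Proposition \ref{prop:Frechet}. By Corollary \ref{families_of_bisections} it suffices to test along smooth finite-dimensional families $(\omega_u,\varphi_u)$, which reduces the claim to smoothness of $(u,\alpha,n)\mapsto\big((\varphi_u^{-1})^*\alpha,\varphi_u(n)\big)$, and this holds because the cotangent lift of a smooth family of diffeomorphisms is smooth. I expect the only real obstacle to be bookkeeping rather than substance: one must keep the base points of the covectors straight throughout, remembering that $\psi^*=T^*_{\psi^{-1}(m)}\psi$ is defined pointwise, so that every equality of covectors above is checked in its correct fiber. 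Once the functoriality identity is applied at the right points, self-distributivity collapses to the single cancellation $\varphi^*\circ(\varphi^{-1})^*=\id$, and the only analytic input is the exponential law underlying Corollary \ref{families_of_bisections}.
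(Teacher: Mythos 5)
Your proposal is correct and follows essentially the same route as the paper: both reduce self-distributivity to the contravariant functoriality $(\varphi\circ\psi)^*=\psi^*\circ\varphi^*$ together with the cancellation $\varphi^*\circ(\varphi^{-1})^*=\mathrm{id}$, the only cosmetic difference being that the paper first computes the induced product on bisections explicitly (its equation (\ref{eq:examples_rackoids_not_groupoids_onbisections})) where you instead invoke the structural observation and the footnote to Definition \ref{definition_rackoid} to identify the diffeomorphism part $\varphi\circ\psi\circ\varphi^{-1}$. Your checks of the unit axioms, bijectivity, source--target compatibility and Fr\'echet smoothness are more explicit than the paper's proof, which records only the self-distributivity computation, but that is a matter of completeness rather than of approach.
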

   
   \begin{prf}
   First, we have to determine what the product $\rhd$ induces on bisections: 
It follows directly 
   from (\ref{eq:examples_rackoids_not_groupoids}) that for every bisection 
$(\omega,\varphi) \in 
{\rm Bis}(\Gamma)$ and $(\eta,\psi) \in {\rm Bis} (\Gamma)$, we have 
         \begin{equation}\label{eq:examples_rackoids_not_groupoids_onbisections}
         (\eta,\psi) \rhd (\omega,\varphi) := 
((\psi^{-1})^* \omega,\psi \circ  \varphi \circ  \psi^{-1}  )
         \end{equation}
    As a consequence, for every pair 
       $(\omega,\varphi) \in {\rm Bis}(\Gamma)$ and $(\eta,\psi) \in {\rm Bis} (\Gamma)$
       and every $(\alpha,n)  \in \Gamma$, we compute:
     \begin{eqnarray*} &  \big((\eta,\psi) \rhd (\omega,\varphi) \big) \rhd 
\big(  (\eta,\psi) 
\rhd (\alpha,n)  \big) & \\ 
=&     ((\psi^{-1})^* \omega ,\psi \circ  \varphi \circ  \psi^{-1})  \rhd 
\big(  (\eta,\psi) \rhd 
(\alpha,n)  \big)   &
      \hbox{by  (\ref{eq:examples_rackoids_not_groupoids_onbisections}) }\\  
 =&      ((\psi^{-1})^* \omega, \psi \circ  \varphi \circ  \psi^{-1})  \rhd 
\big(  (\psi^{-1})^* \alpha, 
\psi(n)  
\big)   &
      \hbox{by  (\ref{eq:examples_rackoids_not_groupoids}) } \\
      =& \big(( ( \psi \circ  \varphi \circ  \psi^{-1})^{-1})^* (\psi^{-1})^* 
\alpha, (\psi \circ  \varphi \circ  
\psi^{-1}) \circ   \psi(n) \big)   &
      \hbox{by  (\ref{eq:examples_rackoids_not_groupoids}) } \\
      =& \big( ((\psi^{-1})^* \circ  (\varphi^{-1})^*)\alpha, (\psi \circ  
\varphi)(n) \big) & \\
      =&(\psi, \eta) \rhd ((\varphi,\omega) \rhd (\alpha,n))  
    & \hbox{by (\ref{eq:examples_rackoids_not_groupoids})} 
     \end{eqnarray*}
   \end{prf}
   
   We now compute the Leibniz algebroid associated with it:
   
   \begin{prop}
   Let $M$ be a manifold, then the tangent Leibniz algebroid of the above Lie rackoid structure on 
$\xymatrix{\Gamma\ar@<2pt>[r]^{s}\ar@<-2pt>[r]_{t} & M}$ is the vector bundle
   $ TM \oplus T^*M$, equipped with the projection onto the first component 
as anchor and the bracket:
    $$ [X+\alpha,Y+\beta] = [X,Y]+ {\mathcal L}_X \beta$$
    for all $X,Y \in {\mathcal X}(M), \alpha, \beta \in \Omega^1(M)$.
   \end{prop}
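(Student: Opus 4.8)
The content of the proposition is really the explicit identification of the three pieces produced by the general construction of Theorem \ref{thm:infinitesimal2}, since the Leibniz-algebroid axioms are automatic from that theorem; so the plan is simply to feed the formulas (\ref{eq:examples_rackoids_not_groupoids}) and (\ref{eq:examples_rackoids_not_groupoids_onbisections}) into (\ref{eq:fibre}), (\ref{eq:ancre}) and (\ref{eq:bracket}). First I would compute $A$. At a unit $1_m=(0_m,m)$ one has $T_{1_m}\Gamma = T_{0_m}(T^*M)\oplus T_mM$, and along the zero section $T_{0_m}(T^*M)$ splits canonically as $T_mM\oplus T^*_mM$ (horizontal and vertical). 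Since $s=\pi\circ\mathrm{pr}_1$ with $\pi\colon T^*M\to M$, the map $T_{1_m}s$ annihilates the vertical $T^*_mM$ and the factor $T_mM$ coming from the second copy of $M$, so (\ref{eq:fibre}) gives $A_m=T^*_mM\oplus T_mM$, that is $A\cong TM\oplus T^*M$; the $TM$-summand is the factor of $\Gamma=T^*M\times M$ seen by $t=\mathrm{pr}_2$, and the $T^*M$-summand is the vertical cotangent direction. As $t=\mathrm{pr}_2$, the restriction of $Tt$ to $A$ is the projection onto that $TM$-summand, so by (\ref{eq:ancre}) the anchor $\rho=-Tt|_A$ equals minus this projection. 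I would then fix once and for all the identification $A\cong TM\oplus T^*M$ by declaring the $TM$-component of $b\in A$ to be $\rho(b)$ (not the raw $Tt$-component) and the $T^*M$-component to be its vertical part; with this choice $\rho$ is literally the projection onto the first factor, as claimed.

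Next I would use Proposition \ref{prop:existsbisections} and Lemma \ref{lem:existsbisections} to describe a section of $A$ by a family $\Sigma_u=(\omega_u,\varphi_u)$ with $\omega_0=0$ and $\varphi_0=\id_M$: its derivative at $u=0$ produces the vector field $X=\dot\varphi_0$ (whose image under the raw $Tt$ is $X$, so $\rho=-X$) and the $1$-form $\dot\omega_0$. The key observation is that $\Sigma^\rhd$ in (\ref{eq:examples_rackoids_not_groupoids}) is independent of $\omega$ and depends only on $\varphi$; differentiating it at $1_m$ and inserting (\ref{def_Ad}), (\ref{eq:AdjointForSections}) shows that $\Ad_{(\omega,\varphi)}$ acts on $A\cong TM\oplus T^*M$ as the natural action of the diffeomorphism $\varphi$, namely $\Ad_\Sigma(Y+\eta)=\varphi_*Y+\varphi_*\eta$ on a section $a\leftrightarrow Y+\eta$, where $\varphi_*\eta=(\varphi^{-1})^*\eta$. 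This is already the reason why the cotangent component of the first slot will not appear in the final bracket.

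Finally I would differentiate (\ref{eq:bracket}). With $X=\dot\varphi_0$ the standard first-order formulas $\frac{d}{du}\big|_{u=0}(\varphi_u)_*Y=-[X,Y]$ and $\frac{d}{du}\big|_{u=0}(\varphi_u)_*\eta=-\mathcal{L}_X\eta$ show that, in the raw $TM\oplus T^*M$ picture, $[b,a]$ has tangent part $-[X,Y]$ and cotangent part $-\mathcal{L}_X\eta$. Passing to the anchor-identification fixed above (where the vector-field component is $\rho$, i.e. minus the raw tangent part) replaces these by $[X,Y]$ and $\mathcal{L}_X\eta$, which is exactly $[X+\alpha,Y+\beta]=[X,Y]+\mathcal{L}_X\beta$. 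The compatibility with functions (\ref{eq:anchor}) is already contained in the proof of Theorem \ref{thm:infinitesimal2}, and the Leibniz identity is granted by that theorem, so nothing more is needed. I expect the only real obstacle to be the bookkeeping of signs: the $-Tt$ in the anchor (\ref{eq:ancre}), the inverse in $(\varphi_u)_*=(\varphi_u^{-1})^*$, and the $\underline{\sigma}^{-1}$ in (\ref{eq:AdjointForSections}) must all be tracked consistently so that one lands on the stated bracket rather than its negative; once the identification of $A$ with $TM\oplus T^*M$ is pinned down through $\rho$, everything falls into place.
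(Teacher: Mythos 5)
Your proof is correct and is essentially the paper's own argument carried out in full: the paper's proof consists of the single sentence that the formula ``can be derived quite easily by differentiating (\ref{eq:examples_rackoids_not_groupoids_onbisections}), as in Theorem \ref{thm:infinitesimal}'', and your computation --- identifying $A=\ker(Ts)$ fibrewise as vertical covectors plus the second $TM$ factor, observing that $\Ad_{(\omega,\varphi)}$ depends only on $\varphi$ and is the natural action of $\varphi$ on vector fields and $1$-forms, then applying the standard first-order formulas --- is precisely that differentiation written out. You are in fact more careful than the paper on the one delicate point it glosses over: because of the sign in the anchor $\rho=-Tt|_{A}$ from (\ref{eq:ancre}), the identification of $A$ with $TM\oplus T^*M$ must be made through $\rho$ rather than through $Tt$ (exactly as you do), so that one lands on the stated bracket $[X,Y]+\mathcal{L}_X\beta$ with anchor the first projection rather than on its negative.
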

   \begin{prf}
   This formula can be indeed derived quite easily by differentiating 
   (\ref{eq:examples_rackoids_not_groupoids_onbisections}), as in 
	Theorem \ref{thm:infinitesimal}.
   \end{prf}

Observe that the formula for the Leibniz bracket is exactly the Leibniz bracket of
the hemisemidirect product associated to a Lie algebra (here the Lie algebra of vector fields)
and a module (here the module of $1$-forms), see Example 2.2 (p. 529) in \cite{KinWei}.    

\subsection{Augmented Lie rackoids}

The construction mechanism from Section \ref{augmented_rackoids}
generalizes to the smooth setting. The following theorem also exists in a non-unital
version which we will not spell out explicitly.  

\begin{theo}  \label{augmented_Lie_rackoids}
Let $\xymatrix{\Gamma\ar@<2pt>[r]^{s'}
\ar@<-2pt>[r]_{t'} & M}$ be a Lie groupoid and $\xymatrix{X\ar@<2pt>[r]^{s}
\ar@<-2pt>[r]_{t} & M}$ be a smooth precategory. Suppose that there exists a 
smooth map $p:X\to \Gamma$ such that:
$$\xymatrix{X \ar@<2pt>[d]^{s}\ar@<-2pt>[d]_{t} \ar[r]^p & 
\Gamma \ar@<2pt>[d]^{s'} \ar@<-2pt>[d]_{t'}  \\
M \ar[r]^{\rm id_M} & M }$$
Suppose further that we have a commutative diagram for the identity maps $\epsilon:M\to X$ and
$\epsilon':M\to\Gamma$:
$$\xymatrix{ X \ar[r]^p & \Gamma \\
             M \ar[u]^{\epsilon} \ar[r]^{\rm id_M} & M \ar[u]^{\epsilon'} }$$ 

Suppose that the Lie groupoid ${\rm Bis}(\Gamma)$ of  bisections acts on $X$
smoothly and that for all $g\in{\rm Bis}(\Gamma)$ and all $x\in X$: 
$$p(g\cdot x)\,=\,g\,p(x)\,g^{-1},$$
and that $p(x)\cdot 1_m=1_m$ for all $x\in X$ and all $m\in M$.

Then the prescription 
$$x\rhd y\,:=\,p(x)\cdot y$$
defines for $y\in X$ and a  bisection $x$ a Lie rackoid structure on $X$. 
\end{theo}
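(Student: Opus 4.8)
The plan is to reduce the statement to its discrete counterpart, Theorem \ref{thm:augmented_rackoids}, for everything algebraic, and then to supply the smoothness verifications that the Fr\'echet setting of Section 3 requires. The hypotheses here are precisely the smooth incarnations of those in Theorem \ref{thm:augmented_rackoids}: the two commutative diagrams are the intertwining conditions on sources/targets and identities, the relation $p(g\cdot x)=g\,p(x)\,g^{-1}$ is the augmentation identity (\ref{augmentation_identity}), and $p(x)\cdot 1_m=1_m$ is (\ref{augmentation_identity_2}). Consequently the self-distributivity relation, the compatibility of $\rhd$ with source and target (via the identities $s(g\cdot x)=\underline{g}(s(x))$ and $t(g\cdot x)=\underline{g}(t(x))$ established there), the unit axioms $1_M\rhd y=y$ and $x\rhd 1_m=1_{\underline{\sigma}(m)}$, and the bijectivity of each $\Sigma\rhd-$ all hold verbatim. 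What remains is to check that the operation is smooth in the sense of the definition of a Lie rackoid.

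First I would promote $p$ to a smooth map between manifolds of bisections. Since $p\colon X\to\Gamma$ is smooth and intertwines the source maps ($s'\circ p=s$), it carries a section $\sigma\colon M\to X$ of $s$ to the section $p\circ\sigma$ of $s'$; as $t'\circ p\circ\sigma=t\circ\sigma=\underline{\sigma}$ is a diffeomorphism, $p\circ\sigma$ is a bisection of $\Gamma$ with $\underline{p_*(\Sigma)}=\underline{\sigma}$ (this is the observation recorded in the footnote to the first intertwining condition). On the level of mapping manifolds, post-composition with the smooth map $p$ is a smooth map $\mathcal{C}^\infty(M,X)\to\mathcal{C}^\infty(M,\Gamma)$, and since $\Bis(X)$ and $\Bis(\Gamma)$ are open submanifolds of these by Proposition \ref{prop:Frechet}, the induced assignment $p_*\colon\Bis(X)\to\Bis(\Gamma)$ is smooth.

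Next I would assemble $\rhd$ as a composite of smooth maps. By construction $\Sigma\rhd y=p_*(\Sigma)\cdot y$, so $\rhd$ factors as
$$\Bis(X)\times X\xrightarrow{\,p_*\times\id\,}\Bis(\Gamma)\times X\xrightarrow{\ \cdot\ }X,$$
where the first arrow is smooth by the previous step and the second is the smoothness of the $\Bis(\Gamma)$-action assumed in the hypotheses; hence $\rhd$ is smooth. Fixing a single bisection $\Sigma$, the map $\Sigma\rhd-\colon X\to X$ is the action $g\cdot-$ of the one element $g=p_*(\Sigma)$ of the group $\Bis(\Gamma)$, so it is a diffeomorphism whose inverse is the action $g^{-1}\cdot-$. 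This settles all the smoothness clauses, and together with the algebraic part inherited from Theorem \ref{thm:augmented_rackoids} it yields the Lie rackoid structure.

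I expect the only genuinely delicate point to be the smoothness of $p_*$ as a map of Fr\'echet manifolds, which rests on the fact that push-forward by a smooth map is smooth in the calculus of manifolds of mappings underlying Proposition \ref{prop:Frechet}. By the exponential law this reduces to checking that $p$ sends smooth families of bisections to smooth families, which is immediate from Corollary \ref{families_of_bisections}: the map $(u,m)\mapsto p(\sigma_u(m))$ is smooth whenever $(u,m)\mapsto\sigma_u(m)$ is. Everything else is formal once the discrete theorem is invoked.
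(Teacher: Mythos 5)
Your proposal is correct and follows exactly the route the paper intends: the paper in fact gives no written proof of this theorem, saying only that the construction mechanism of Section \ref{augmented_rackoids} ``generalizes to the smooth setting,'' which is precisely your reduction to Theorem \ref{thm:augmented_rackoids} for all the algebraic axioms (self-distributivity, source/target compatibility, units, bijectivity). Your supplementary smoothness checks --- smoothness of $p_*\colon \Bis(X)\to\Bis(\Gamma)$ via push-forward and the exponential law of Corollary \ref{families_of_bisections}, the factorization of $\rhd$ through the smooth $\Bis(\Gamma)$-action, and the diffeomorphism property of each $\Sigma\rhd-$ coming from the group structure --- supply exactly the details the paper leaves implicit, and at the same level of rigor as the paper's own proof that a Lie groupoid defines a Lie rackoid.
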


As examples for the preceding theorem, we can consider associated tensor bundles to the pair 
Lie groupoid $\xymatrix{\Gamma\ar@<2pt>[r]^{s'}\ar@<-2pt>[r]_{t'} & M}$: 

\begin{cor}
Let $\Gamma$ be a Lie groupoid over $M$.
For every $ q\in {\mathbb N}$, 
$\bigotimes^q T\Gamma$, $\Lambda^q T\Gamma$, $\bigotimes^q T^*\Gamma$ or 
$\Lambda^q T^*\Gamma$ admit natural Lie rackoid structures over $M$. 
\end{cor}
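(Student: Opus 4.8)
The plan is to realize each of these tensor bundles as the precategory $X$ in Theorem \ref{augmented_Lie_rackoids}, with $\Gamma$ itself playing the role of the ambient Lie groupoid. Write $F$ for any one of the natural vector bundle functors $\bigotimes^q T$, $\Lambda^q T$, $\bigotimes^q T^*$, $\Lambda^q T^*$, and set $X := F\Gamma$, a vector bundle over $\Gamma$ with projection $\pi\colon F\Gamma \to \Gamma$. I would equip $X$ with the structure of a smooth precategory over $M$ by composing with the structure maps of $\Gamma$: put $s := s' \circ \pi$ and $t := t' \circ \pi$, which are surjective submersions since $\pi$ and $s',t'$ are, and define the unit $\epsilon\colon M \to F\Gamma$ by sending $m$ to the zero element of the fibre $(F\Gamma)_{1_m}$ over the identity $1_m = \epsilon'(m)$. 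The morphism of precategories $p\colon X \to \Gamma$ is taken to be $\pi$ itself; it intertwines sources, targets and units by construction, so both commutative squares required in Theorem \ref{augmented_Lie_rackoids} hold.

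For the action of $\Bis(\Gamma)$ on $X$ I would use the functorial lift of conjugation. Each bisection $\Sigma$ induces the conjugation diffeomorphism $c_\Sigma\colon \Gamma \to \Gamma$ of Definition \ref{bisections_in_a_groupoid}(d), and applying the functor $F$ yields a bundle automorphism $F(c_\Sigma)$ of $F\Gamma$ covering $c_\Sigma$. Since conjugations compose as $c_\Sigma \circ c_\Tau = c_{\Sigma \star \Tau}$ and $F$ is functorial on diffeomorphisms, the assignment $\Sigma \mapsto F(c_\Sigma)$ is a genuine left action of the group $\Bis(\Gamma)$ on $X$, and I would set $\Sigma \cdot x := F(c_\Sigma)(x)$. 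The augmentation identity is then immediate: because $F(c_\Sigma)$ covers $c_\Sigma$, one has $p(\Sigma \cdot x) = c_\Sigma(\pi(x)) = \Sigma \star p(x) \star \Sigma^{-1}$, which is exactly the required relation. The identity-fixing hypothesis holds because $F(c_\Sigma)$ is fibrewise linear and hence sends the zero element of $(F\Gamma)_{1_m}$ to the zero element of $(F\Gamma)_{c_\Sigma(1_m)} = (F\Gamma)_{1_{\underline{\sigma}(m)}}$; that is, $\Sigma \cdot 1_m = 1_{\underline{\sigma}(m)}$, so units are carried to units.

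It then remains to check smoothness, which I expect to be the main obstacle, since it is the only non-formal point and must be verified with respect to the Fréchet manifold structures of Proposition \ref{prop:Frechet}. Here I would argue exactly as in the proof that a Lie groupoid defines a Lie rackoid via conjugation: the conjugation map $(\Sigma,\gamma) \mapsto c_\Sigma(\gamma)$ is smooth by Corollary \ref{families_of_bisections}, and since $F$ is a smooth natural bundle functor on the finite-dimensional manifold $\Gamma$, the lifted map $(\Sigma,x) \mapsto F(c_\Sigma)(x)$ inherits smoothness; keeping $M$ compact, as assumed throughout, guarantees that $\Bis(F\Gamma)$ carries its Fréchet manifold structure. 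With all hypotheses of Theorem \ref{augmented_Lie_rackoids} verified, the prescription $x \rhd y := p(x) \cdot y = F(c_{p(x)})(y)$ defines a Lie rackoid structure on $X = F\Gamma$ over $M$, and running this over the four choices of $F$ gives the corollary.
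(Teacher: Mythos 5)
Your proposal is correct and is precisely the paper's intended argument: the corollary is stated there without proof, as an immediate application of Theorem \ref{augmented_Lie_rackoids} with $X=F\Gamma$, $p$ the bundle projection $F\Gamma\to\Gamma$, and $\Bis(\Gamma)$ acting by the functorial lift (tangent, respectively inverse-transpose cotangent) of the conjugation diffeomorphisms $c_\Sigma$, which is exactly what you spell out. The one point worth flagging is that the unit condition you verify, $\Sigma\cdot 1_m=1_{\underline{\sigma}(m)}$, is the sensible reading rather than the theorem's literal hypothesis $p(x)\cdot 1_m=1_m$ (which, taken verbatim, would force $\underline{\sigma}=\mathrm{id}_M$, since the augmentation identity makes the action cover $\underline{\sigma}$), and your corrected form is exactly what the pointed Lie rackoid axiom $\Sigma\rhd 1_m=1_{\underline{\sigma}(m)}$ demands.
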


Let us discuss another application of Theorem \ref{augmented_Lie_rackoids}. 

Let $M$ be a manifold. 
Take the Lie groupoid $\xymatrix{T^*M\oplus T^*M\ar@<2pt>[r]^{s}\ar@<-2pt>[r]_{t} & M}$. 
Its bisections are (like in Lemma \ref{lem:bisects_rackoids_not_groupoids})
of the form $(\omega_m,\alpha_n,\phi)$ where $\omega_m\in T^*_mM$, $\alpha_n\in T^*_nM$
and $\phi$ is a diffeomorphism between open sets, sending $m$ to $\phi(m)=n$. Take as the map 
$p$ the forgetful map $p:T^*M\oplus T^*M\to M\times M$ with values in the pair groupoid
$\xymatrix{M\times M\ar@<2pt>[r]^{s'}\ar@<-2pt>[r]_{t'} & M}$. The bisections of the pair 
groupoid are diffeomorphisms $\psi$ on $M$. These bisections act on $T^*M\oplus T^*M$ in 
a natural way, by composing diffeomorphism and acting via $\psi^*$ on cotangent vectors. 
It is easy to see that the $p$-image of the action of $\psi$ on $(\omega_m,\alpha_n,\phi)$
is the conjugation of $\phi$ by $\psi$. Therefore we are in position to apply the above
theorem in order to obtain a rackoid structure on $T^*M\oplus T^*M$. This is another way to
obtain the Lie rackoid which we described in Section \ref{hemisemidirectproduct_Lie_rackoid}:

\begin{cor}
The Lie rackoid constructed in Section \ref{hemisemidirectproduct_Lie_rackoid}
can be obtained as a special case of Theorem \ref{augmented_Lie_rackoids}
using the forgetful map $p$ from the Lie groupoid $T^*M\oplus T^*M$ to the 
pair groupoid $M\times M$. 
\end{cor}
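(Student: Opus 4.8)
The plan is to apply Theorem \ref{augmented_Lie_rackoids} to the data assembled in the paragraph preceding the statement, read off the resulting product $x\rhd y=p(x)\cdot y$, and match it with formula (\ref{eq:examples_rackoids_not_groupoids}). First I would fix the structures precisely: the precategory is $X=T^*M\oplus T^*M$, whose arrow over a pair $(m,n)$ is a couple $(\omega_m,\alpha_n)\in T^*_mM\oplus T^*_nM$, so that $s(\omega_m,\alpha_n)=m$, $t(\omega_m,\alpha_n)=n$ and $1_m=(0_m,0_m)$; the Lie groupoid is the pair groupoid $\Gamma=M\times M$; and $p$ is the forgetful map $(\omega_m,\alpha_n)\mapsto(m,n)$. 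A bisection $\psi\in\Bis(M\times M)=\mathrm{Diff}(M)$ acts by $\psi\cdot(\omega_m,\alpha_n)=((\psi^{-1})^*\omega_m,(\psi^{-1})^*\alpha_n)$, an arrow sitting over $(\psi(m),\psi(n))$.

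Next I would verify the hypotheses of Theorem \ref{augmented_Lie_rackoids}. That $p$ intertwines source and target over $\id_M$ and sends $\epsilon(m)=(0_m,0_m)$ to $\epsilon'(m)=(m,m)$ is immediate. The augmentation identity $p(\psi\cdot x)=\psi\,p(x)\,\psi^{-1}$ is exactly the observation already recorded above, that forgetting the covector data turns the $\psi$-action into conjugation of the underlying diffeomorphism in the pair groupoid; the unit section is preserved since $(\psi^{-1})^*0=0$ sends $1_m$ to $1_{\psi(m)}$; and smoothness of the action in the Fr\'echet topology of Proposition \ref{prop:Frechet} is clear. Granting these, Theorem \ref{augmented_Lie_rackoids} produces a Lie rackoid on $X$ whose product is $x\rhd(\mu_a,\nu_b)=((\varphi^{-1})^*\mu_a,(\varphi^{-1})^*\nu_b)$, where $\varphi=\underline{p(x)}$; in particular it depends on $x$ only through $\varphi$.

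The only genuinely delicate point, and the one I expect to be the main obstacle, is to reconcile the two carrier spaces: the rackoid of Section \ref{hemisemidirectproduct_Lie_rackoid} lives on $T^*M\times M$, which records a single covector at the source, whereas the augmented construction lives on the larger $T^*M\oplus T^*M$, recording a covector at both source and target. I would resolve this through the embedding $T^*M\times M\hookrightarrow T^*M\oplus T^*M$, $(\alpha,n)\mapsto(\alpha,0_n)$, whose image is the sub-precategory where the target covector vanishes. Since the product only transports covectors by $(\varphi^{-1})^*$, which fixes the zero covector, this locus is stable under every $x\rhd-$, its bisections are exactly the triples $(\omega,0,\varphi)$, and restricting the formula above gives $x\rhd(\alpha,0_n)=((\varphi^{-1})^*\alpha,0_{\varphi(n)})$, i.e. precisely $(\omega,\varphi)\rhd(\alpha,n)=((\varphi^{-1})^*\alpha,\varphi(n))$ of (\ref{eq:examples_rackoids_not_groupoids}). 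This identifies the hemisemidirect-product Lie rackoid with the restriction of the augmented one to the zero-target-covector locus (equivalently, one may use the projection $(\mu_a,\nu_b)\mapsto(\mu_a,b)$, a rackoid morphism onto it), which is what the statement asserts. The work is thus not computational but conceptual: pinning down this identification and checking that the locus is genuinely a sub-rackoid carrying the same product.
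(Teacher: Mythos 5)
Your proposal is correct and takes essentially the same route as the paper, whose justification of this corollary is precisely the paragraph preceding it: take $X=T^*M\oplus T^*M$, the pair groupoid $M\times M$, the forgetful map $p$, and the natural $\mathrm{Diff}(M)$-action, check the augmentation identity, and invoke Theorem \ref{augmented_Lie_rackoids}. The one place you go beyond the paper is the "delicate point" you flag: the paper simply asserts that the resulting rackoid on $T^*M\oplus T^*M$ "is another way to obtain" the rackoid of Section \ref{hemisemidirectproduct_Lie_rackoid} living on $T^*M\times M$, whereas you make the identification explicit via the zero-target-covector embedding (and your restriction argument is sound, since $(\varphi^{-1})^*0=0$); note that one can also get the corollary even more directly by applying Theorem \ref{augmented_Lie_rackoids} to the composite forgetful map $T^*M\times M\to M\times M$, which reproduces (\ref{eq:examples_rackoids_not_groupoids}) on the nose.
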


Let us also come back to a smooth version of the construction in 
Section \ref{augmented_rackoids}.

\begin{cor}
Let $q:Y\to M$ be a smooth fiber bundle such that a Lie groupoid 
$\xymatrix{\Gamma\ar@<2pt>[r]^{s'}\ar@<-2pt>[r]_{t'} & M}$ acts freely and 
transitively on the fibers of $q$. Then the fiber product $X:=Y\times_{q,M,q}Y$ 
becomes naturally a non-unitary Lie rackoid. 
\end{cor}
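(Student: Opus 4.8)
The plan is to carry the discrete construction of Section~\ref{augmented_rackoids} into the smooth category and then to quote the non-unital smooth analogue of Theorem~\ref{augmented_Lie_rackoids}. Concretely, I first make $X:=Y\times_{q,M,q}Y$ into a smooth semi-precategory, then exhibit a smooth intertwiner $p\colon X\to\Gamma$ together with a smooth action of $\Bis(\Gamma)$ on $X$ obeying the augmentation identity (\ref{augmentation_identity}), and finally read off the rackoid product $x\rhd y:=p(x)\cdot y$.

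\emph{Smooth structures.} Since $q$ is a fibre bundle, hence a surjective submersion, the fibre product $X=Y\times_{q,M,q}Y$ is the preimage of the diagonal under the submersion $q\times q\colon Y\times Y\to M\times M$, hence a closed embedded submanifold of $Y\times Y$; and the common projection $s=t\colon X\to M$, $(y_1,y_2)\mapsto q(y_1)=q(y_2)$, is again a surjective submersion, as one sees at once in a local trivialisation $q^{-1}(U)\cong U\times F$. Thus $\xymatrix{X\ar@<2pt>[r]^{s}\ar@<-2pt>[r]_{t} & M}$ is a smooth semi-precategory. Observe that $q$ need not admit a global section, so there is no canonical unit $\epsilon\colon M\to X$; this is precisely why the output is a \emph{non-unital} Lie rackoid, and why I use the non-unital form of the theorem.

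\emph{The intertwiner $p$.} Define $p\colon X\to\Gamma$ by letting $p(y_1,y_2)$ be the unique arrow $\gamma$ with $y_1=\gamma\cdot y_2$; existence and uniqueness come from transitivity and freeness of the action along the fibres of $q$. Since $y_1$ and $y_2$ lie in the same fibre, $\gamma$ lies in the isotropy over $m=q(y_1)=q(y_2)$, so $p$ intertwines the (equal) source and target maps of $X$ with those of $\Gamma$, as demanded in Theorem~\ref{augmented_Lie_rackoids}. The one genuine technical point, and the main obstacle, is the smoothness of $p$: it is exactly the statement that the action is \emph{principal}. I would verify it locally: over a chart $U$ with $q^{-1}(U)\cong U\times F$, the free transitive action identifies $p$ with the smooth division map of the resulting $\Gamma$-torsor structure on the fibres, and these local descriptions agree on overlaps. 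Equivalently, the map $(\gamma,y)\mapsto(\gamma\cdot y,y)$, with $\gamma$ in the isotropy over $q(y)$, is a bijection onto $X$, and $p$ is recovered as its inverse followed by the projection to $\Gamma$.

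\emph{The $\Bis(\Gamma)$-action and conclusion.} Let $\Bis(\Gamma)$ act by $\sigma\cdot(y_1,y_2):=(\sigma\cdot y_1,\sigma\cdot y_2)$ with $\sigma\cdot y:=\sigma(q(y))\cdot y$; since $q(y_1)=q(y_2)$, both components land in the fibre over $\underline{\sigma}(q(y_1))$, so the action covers $\underline{\sigma}$ and preserves $X$. Smoothness with respect to the Fr\'echet structure of Proposition~\ref{prop:Frechet} follows from Corollary~\ref{families_of_bisections}: for a smooth family $(\Sigma_u)$ the map $(u,y)\mapsto\sigma_u(q(y))\cdot y$ is smooth, being the composite of the smooth evaluation $(u,m)\mapsto\sigma_u(m)$ with the smooth groupoid action. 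The augmentation identity is then immediate from the defining property of $p$: from $y_1=\gamma\cdot y_2$ one gets $\sigma\cdot y_1=(\sigma(m)\,\gamma\,\sigma(m)^{-1})\cdot(\sigma\cdot y_2)$, whence $p(\sigma\cdot x)=\sigma\star p(x)\star\sigma^{-1}$, and in the non-unital version no unit condition is required. All hypotheses of the non-unital smooth form of Theorem~\ref{augmented_Lie_rackoids} being met, the prescription $x\rhd y:=p(x)\cdot y$ endows $X=Y\times_{q,M,q}Y$ with a non-unital Lie rackoid structure, which is what we wanted.
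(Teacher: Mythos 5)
Your proposal is correct and follows essentially the paper's own route: the paper obtains this corollary precisely by transposing the construction of Section \ref{augmented_rackoids} (the map $p$ given by division along the free transitive action, with $\Bis(\Gamma)$ acting componentwise on pairs) into the smooth setting and invoking the non-unital version of Theorem \ref{augmented_Lie_rackoids}, which is exactly your plan. You in fact supply more detail than the paper does --- the smooth semi-precategory structure on $X$, the smoothness of $p$ (principality) and of the $\Bis(\Gamma)$-action via Corollary \ref{families_of_bisections} --- all of which the paper leaves implicit.
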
 

\subsection{The fundamental rackoid}
\label{sec:link}

Here we explain a rackoid version of the fundamental rack defined e.g. in \cite{FenRou}
p.358. In order to be close to their construction, we will work here with right racks instead of 
left racks, and consequently with right rackoids instead of left rackoids. 
 
A {\it link} is a codimension two embedding $L:M\subset Q$ of manifolds. We will 
assume that $M$ is 
non-empty, that $Q$ is connected (with empty boundary) and that $M$ is transversely 
oriented in $Q$.
In other words, we assume that each normal disc to $M$ in $Q$ has an orientation 
which is locally
and globally coherent. 

The link is called {\it framed} if there is a cross-section $\lambda:M\to\partial N(M)$ 
of the 
normal disk bundle. Denote by $M^+$ the image of $M$ under $\lambda$. 
In the following, we will only
consider framed links.   

Then, Fenn and Rourke associate to $L\subset Q$ an augmented rack, called the 
{\it fundamental rack} of the link
$L$, which is the space $\Gamma$ of homotopy classes of paths in 
$Q_0:={\rm closure}(Q\setminus N(L))$ of $L$, from a point in $M^+$ to some base point $q_0$. 
During the homotopy, the final point of the path at $q_0$ is kept fixed and the initial
point is allowed to wander at will on $M^+$.

The set $\Gamma$ has an action of the fundamental group $\pi_1(Q_0,q_0)$ defined as follows: 
let $\gamma$ 
be a loop in $Q_0$ based at $q_0$ representing an element $g\in\pi_1(Q_0)$. 
If $\alpha\in\Gamma$ is represented by the 
path $\alpha$, define $a\cdot g$ to be the class of the composite path $\alpha\circ\gamma$.

We can use this action to define a rack structure on $\Gamma$. Let $p\in M^+$ be a point on the 
framing image. Then $p$ lies on an unique meridian circle  of the normal disc bundle. 
Let $m_p$ be the
loop based at $p$ which follows the meridian around in a positive direction. 
Let $a,b\in\Gamma$ be
represented by paths $\alpha,\beta$ respectively. 
Let $\partial(b)$ be the element of $\pi_1(Q_0,q_0)$
determined by the homotopy class of the loop $\beta^{-1}\circ m_{\beta(0)}\circ\beta$. The 
\emph{fundamental rack of the framed link $L$} is defined to be the set $\Gamma=\Gamma(L)$ 
with the operation
$$a\lhd b\,:=\,a\cdot\partial(b):\,=\,[\alpha\circ\beta^{-1}\circ m_{\beta(0)}\circ\beta].$$  

Fenn and Rourke show in \cite{FenRou} Proposition 3.1, p. 359, that $\Gamma$ is indeed a rack,
and go on to show that $\partial:\Gamma\to\pi_1(Q_0,q_0)$ is an augmented rack.

Let us now come to an "-oidification" of their construction, i.e. the construction of a 
fundamental rackoid $\Phi$ on which the fundamental groupoid $\Pi(Q_0)$, 
i.e. the set of homotopy classes of paths in $Q_0$ from some point $q_0$ to 
some other point $q_1$, acts naturally.

The main idea is to replace the set of homotopy classes $\Gamma$ by the set 
$\Phi(Q_0)$ consisting of homotopy classes of paths $\alpha$ from some point $s(\alpha)=q_0\in Q_0$ to a  
point in $M^+$ at $\frac{1}{2}$, and then back to some other point $t(\alpha)=q_1\in Q_0$. There is
a natural action of the fundamental groupoid $\Pi(Q_0)$
on $\Phi(Q_0)$ from the left resp. from the right induced by concatenation of paths. 
As before one has to rescale in order to keep the passage in $M^+$ at $t=\frac{1}{2}$. 

There is also a natural map $\partial:\Phi(Q_0)\to\Pi(Q_0)$ where for 
$\alpha\in\Phi(Q_0)$ with $s(\alpha)=q_0$ and $t(\alpha)=q_1$, 
$\partial\alpha$ is given by the concatenation of the 
paths from $q_0$ to $\alpha(\frac{1}{2})$, the meridian at the point $\alpha(\frac{1}{2})$, 
and finally the paths from $\alpha(\frac{1}{2})$
to $q_1$. In the same way as before using the non-unitary version of 
Theorem \ref{thm:augmented_rackoids}, 
this map gives rise to an non-unital augmented rackoid, which we call
the {\it fundamental rackoid} of the link $L$.

\section{Conclusion}

Having established the basic properties of Lie rackoids, we are left with many 
open questions. 

First of all, there is structure theory for Lie rackoids to be done (morphisms,
weak morphisms, i.e. Morita equivalences, etc). The functoriality of the concept is 
a important feature for modern differential geometry. 

Concerning Lie rack bundles, one may ask about the notion of principal $R$-bundles 
where $R$ is a Lie rack. It should not be too difficult to find a suitable definition and 
to show that these are special cases of Lie rackoids. 
As for principal bundles, one would like to have Atiyah sequences for those.
  
One of the most important questions is certainly about the integration of Leibniz 
algebroids into Lie rackoids. For the moment, there are several constructions 
integrating finite-dimensional Leibniz algebras to Lie racks, but unfortunately not in 
a functorial way. One could try to integrate Leibniz algebroids along the
lines of Mackenzie for transitive Leibniz algebroids, or right away along the lines 
of Crainic-Fernandes for arbitrary Leibniz algebroids, with probably some discreteness 
conditions coming into the game (for rendering the analogue of the Weinstein 
${\mathcal C}^0$-manifold a smooth manifold).  

Lie rackoids seem to be the ``integral manifolds'' which correspond to some kind of 
non-commutative foliations (the Leibniz algebroid). Observe that we did not admit 
non-Hausdorff manifolds in our setting, excluding interesting examples from foliation 
theory. Even more fundamental, in the same way as
\'etale Lie groupoids may be seen as generalized spaces (see \cite{CraMoe}),
\'etale Lie rackoids may be seen as generalized non-commutative
spaces. The open problem is to make this relation precise in whatever 
non-commutative framework you prefer.

\end{document}